\newtheorem{remark}[theorem]{Remark}
\newcommand{\e}{\mathrm{e}}
\newcommand{\R}{\mathbb{R}}
\newcommand{\Z}{\mathbb{Z}}
\newcommand{\E}{\mathbb{E}}
\newcommand{\bigo}[1]{\mathcal{O}(#1)}
\providecommand{\norm}[1]{\ensuremath{\lVert#1\rVert}}
\providecommand{\gnorm}[1]{\ensuremath{\lVert#1\rVert_\sigma}}
\newcommand{\dr}{\mathrm{d}r}
\newcommand{\dd}{\mathrm{d}}
\newcommand{\Tr}{\mathrm{Tr}}
\newcommand{\ii}{\ensuremath{\mathrm{i}}}
\title{Exponential integrators for stochastic Schr\"{o}dinger equations driven by Ito noise}
\author{Rikard Anton\thanks{Department of Mathematics and Mathematical
              Statistics, Ume{\aa} University, SE--901~87~Ume{\aa},
              Sweden ({\tt rikard.anton@umu.se}).}
       \and 
    David Cohen\thanks{Department of Mathematics and Mathematical
              Statistics, Ume{\aa} University, SE--901~87~Ume{\aa}, 
              Sweden ({\tt david.cohen@umu.se}). 
              Department of Mathematics, University of Innsbruck, 
              A--6020~Innsbruck, Austria  
              ({\tt david.cohen@uibk.ac.at})} 
}
\begin{document}
\maketitle
\begin{abstract}
We study an explicit exponential scheme for the time discretisation of 
stochastic Schr\"{o}dinger equations driven by additive or multiplicative Ito noise. 
The numerical scheme is shown to converge with strong order $1$ if the noise is additive 
and with strong order $1/2$ for multiplicative noise. In addition, if the noise is additive, 
we show that the exact solutions of our problems satisfy trace formulas for 
the expected mass, energy, and momentum (i.\,e., linear drifts in these quantities). 
Furthermore, we inspect the behaviour 
of the numerical solutions with respect to these trace formulas. 
Several numerical simulations are presented and confirm our theoretical results. 
\end{abstract}

\begin{keywords}
Stochastic partial differential equations; Stochastic Schr\"odinger equations; Numerical methods; 
Geometric numerical integration; 
Stochastic exponential integrators; Strong convergence; Trace formulas
\end{keywords}

\begin{AMS}
35Q55, 60H15, 65C20, 65C30, 65C50, 65J08 
\end{AMS}

\pagestyle{myheadings}
\thispagestyle{plain}
\markboth{R. Anton and D. Cohen}{Exponential integrators for stochastic Schr\"{o}dinger equations}

%%%%%%%%%%%%%%%%%%%%%%%%%%%%%%%%%%%%%%%%%%%%%%%%%%%%%%%%%%%%

\section{Introduction}\label{sect:intro}

We consider temporal discretisations of nonlinear stochastic Schr\"{o}dinger equations driven by Ito noise
\begin{equation}
\begin{aligned}
&\ii \mathrm{d}u  = \Delta u\, \mathrm{d}t + F(x,u)\,\mathrm{d}t+G(u)\,\mathrm{d}W &&\quad \mathrm{in}\: \ 
\mathbb{R}^d\times(0,\infty), \\
&u(\cdot,0) = u_0 &&\quad \mathrm{in}\: \ \mathbb{R}^d,
\end{aligned}
\label{sse}
\end{equation}
where $u=u(x,t)$, and $\ii=\sqrt{-1}$. The product between $G$ and $\dd W$ is of Ito type, 
and further details on $F$ and $G$ and on the dimension $d$ will be specified later. 
The stochastic process $\{W(t)\}_{t\geq 0}$ is a square integrable complex-valued $Q$-Wiener process with respect 
to a normal filtration $\{\mathcal{F}_t\}_{t\geq 0}$ on a filtered probability space 
$(\Omega,\mathcal{F},\mathbb{P},\{\mathcal{F}_t\}_{t\geq 0})$. The regularity of the covariance operator $Q$ will be specified later in the text.
The initial value $u_0$ is an $\mathcal{F}_0$-measurable complex-valued function, 
which will be further specified below.

The Schr\"{o}dinger equation is widely used within physics and takes several different forms depending on the situation. 
It is used in hydrodynamics, nonlinear optics and plasma physics to only mention a few areas. 
In certain physical situations it may be appropriate to incorporate some kind of randomness into the equation. One possibility is to add a driving random force to then obtain an equation of the form \eqref{sse}. 
%This can occur, for instance, through a random potential and when this potential depends on both time 
%and space \coco{I don't understand this??} it seems reasonable to consider the stochastic Schr\"{o}dinger equation,
See for example \cite{AStochasticNonlinearSchrodingerEquation} and references therein for further details.

Stochastic Schr\"odinger equations have received much attention from a more theoretical point 
of view during the last decades. Connected to the present article and without being exhaustive, 
we mention the works \cite{effectonnoise,SNLSEinH1,Theoreticalnumericalaspectsstochasticnonlinear} on Ito problems and 
\cite{AStochasticNonlinearSchrodingerEquation,SNLSEinH1,dbd05,mr07,Theoreticalnumericalaspectsstochasticnonlinear} for the Stratonovich setting. 

It is seldom possible to solve stochastic partial differential equations exactly, and efficient numerical schemes are 
therefore needed. For the time integration of the above stochastic Schr\"odinger equations, we will consider stochastic 
exponential integrators. These numerical methods are explicit and easy to implement, furthermore 
they offer good geometric properties. Exponential integrators are widely used and studied nowadays 
as witnessed by the recent review \cite{ho10} for the time integration of deterministic problems. 
Applications of such schemes to the deterministic (nonlinear) Schr\"odinger equation can 
be found in, for example, \cite{MR1735097,bos06,MR2346679,Celledoni2008,d09,cg,cgp15} and references therein. 
These numerical methods were recently investigated for stochastic parabolic partial differential equations  
in, for example, \cite{Lord2004,MR2578878,MR3047942} and for the stochastic 
wave equations in \cite{cls12s,Wang2014,cqs14s,aclw15s}.

We now review previous works on temporal discretisations of stochastic Schr\"{o}dinger 
equations. In \cite{Weakandstrongorderofconv} a Crank-Nicolson scheme 
is studied for the equation with nonlinearity $F(u)$. 
First order of convergence is obtained in the case of additive noise, and with multiplicative Ito noise the convergence rate is one half. 
Observe that this numerical scheme is implicit. A stochastic Schr\"odinger equation with Stratonovich noise is considered 
in \cite{Asemi-discretescheme}, where, again, a Crank-Nicolson scheme is studied for the equation with nonlinearity 
$F(x,u)=\lambda|u|^{2\sigma}u$, with $\lambda=\pm 1$ and $\sigma>0$. The authors prove convergence to the exact solution and mass preservation of the scheme. Further, in \cite{Amass-preservingsplittingscheme} a mass-preserving splitting scheme for equation \eqref{sse} with $F(x,u)=V(x)u$ and $G(u)=u$ is considered. 
The noise is of Stratonovich type and first order convergence is obtained. 
In \cite{liu13b}, $V(x)$ is replaced by $|u|^2$ and first order convergence is again obtained. 
Still in the Stratonovich setting, \cite{jwh13} derives multi-symplectic schemes for stochastic Schr\"odinger equations. 
We finally mention \cite{Numericalsimulationfocusingstochastic, Barton-Smith_et_al-2005-Numerical_Methods_for_Partial_Differential_Equations}, 
in which thorough numerical simulations are presented for both additive noise and multiplicative Stratonovich noise.

In the present work we show that
\begin{itemize}
\item the exponential integrator applied to the linear stochastic Schr\"odinger equation 
with additive noise converges strongly with order $1$ and satisfies exact trace formulas for the mass, the energy, and for the momentum; 
\item the exponential integrator applied to the stochastic Schr\"odinger equation with a multiplicative potential 
and additive noise converges with strong order $1$, but has a small error in the trace formulas for the mass and energy; 
\item the exponential integrator applied to stochastic Schr\"odinger equations driven by multiplicative 
Ito noise strongly converges with order $1/2$.
\end{itemize}

We begin the exposition by introducing some notations and useful results that we will use in our proofs. 
After that we will follow a similar approach as in \cite{Weakandstrongorderofconv}. 
That is, we will begin by analysing the numerical method applied to the linear 
Schr\"odinger equation with additive noise in Section~\ref{sec-linadd}. Then we study stochastic Schr\"odinger 
equations with a multiplicative potential in Section~\ref{sec-potadd} and finally we consider 
the stochastic Schr\"odinger equation with a multiplicative potential 
and multiplicative noise in Section~\ref{sec-mult}. 
For each of the above problems, we analyse the speed of convergence of 
the exponential methods (in the strong sense) and for additive problems 
we show some trace formulas (such results could be interpreted as weak error estimates). 
Various numerical experiments accompany the presentation and illustrate 
the main properties of these exponential methods 
when applied to stochastic Schr\"odinger equations driven by Ito noise.

%%%%%%%%%%%%%%%%%%%%%%%%%%%%%%%%%%%%%%%%%%%%%%%%%%%%%%%%%%%%%%%%%%%%%%%%%%%%%%%%%%%%%%%%%%%%%%%%%%%%%%%%%%%%%%%%%%%%

\section{Notations and some useful results}\label{sec-not}
Given two separable Hilbert spaces $H_1$ and $H_2$ with norms $\norm{\cdot}_{H_1}$ and $\norm{\cdot}_{H_2}$ respectively, 
we denote the space of bounded linear operators from $H_1$ to $H_2$ by $\mathcal{L}(H_1,H_2)$. 
We denote by $\mathcal{L}_2(H_1,H_2)$ the set of Hilbert-Schmidt operators from $H_1$ to $H_2$ with norm
$$
\norm{\Phi}_{\mathcal{L}_2(H_1,H_2)}:=\left(\sum_{k=1}^\infty\norm{\Phi e_k}_{H_2}^2\right)^{1/2},
$$
where $\{e_k\}_{k=1}^{\infty}$ is any orthonormal basis of $H_1$.
Furthermore, for Hilbert spaces $H_1$, $H_2$, and $H_3$ 
we have (see the proof of Lemma~$2.1$ in \cite{AStochasticNonlinearSchrodingerEquation}) 
that if $S\in\mathcal{L}_2(H_1,H_2)$ and $T\in\mathcal{L}(H_2,H_3)$, then $TS\in\mathcal{L}_2(H_1,H_3)$ and
\begin{align}\label{ineq1}
\norm{TS}_{\mathcal{L}_2(H_1,H_3)}\leq \norm{T}_{\mathcal{L}(H_2,H_3)}\norm{S}_{\mathcal{L}_2(H_1,H_2)}.
\end{align}
Also, we consider $L^2(\mathbb{R}^d)$ the space of square integrable functions on $\mathbb{R}^d$ with inner product
\begin{align}\label{innerp}
(u,v)=\mathrm{Re}\int_{\mathbb{R}^d}u\bar{v}\,\dd x.
\end{align}
For $\sigma\in\mathbb{R}$, we further denote the fractional Sobolev space of order $\sigma$ by $H^\sigma=H^\sigma(\R^d)$ 
with norm $\gnorm{\cdot}$. To make the notations cleaner and 
more readable we will use the following shorter notations
$$
L^2=L^2(\mathbb{R}^d),\quad \mathcal{L}(H_1)=\mathcal{L}(H_1,H_1),\quad \mathcal{L}_2^\sigma=\mathcal{L}_2(L^2,H^\sigma).
$$
We note that the operator $-\ii\Delta$, appearing in the stochastic Schr\"odinger equation, 
is the generator of a semigroup of isometries of bounded linear operators 
$S(t)=\e^{-\ii t\Delta}$. We will make use of the following result
\begin{lemma}\label{lemma1}
(See e.g.~\cite[Lemma $3.2$]{Amass-preservingsplittingscheme})
Consider any $\sigma\geq 0$ and $S(t)=\e^{-\ii t\Delta}$, $t\geq 0$, then, for $w\in H^\sigma$, one has 
$$
\gnorm{S(t)w}=\gnorm{w},
$$
and, for $\Delta w\in H^\sigma$ and any $t\geq 0$,
$$
\gnorm{\Delta(S(t)w)}=\gnorm{\Delta w}, \quad\norm{(S(t)-I)w}_\sigma\leq t\norm{\Delta w}_{\sigma}.
$$ 
\end{lemma}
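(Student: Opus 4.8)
The plan is to establish the three claimed identities and one inequality using the spectral/Fourier characterization of both the operator $S(t)=\e^{-\ii t\Delta}$ and the fractional Sobolev norm $\gnorm{\cdot}$, since on $\R^d$ everything diagonalizes under the Fourier transform. First I would recall that for $w\in H^\sigma$ the norm can be written as $\gnorm{w}^2=\int_{\R^d}(1+\abs{\xi}^2)^\sigma\abs{\widehat{w}(\xi)}^2\,\dd\xi$, and that the Laplacian acts as multiplication by $-\abs{\xi}^2$ on the Fourier side, so that the semigroup acts as multiplication by the scalar $\e^{\ii t\abs{\xi}^2}$. The crucial observation is that this Fourier multiplier has modulus one pointwise, $\abs{\e^{\ii t\abs{\xi}^2}}=1$ for every $\xi$ and every $t$.

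The first isometry identity $\gnorm{S(t)w}=\gnorm{w}$ then follows immediately: since $\widehat{S(t)w}(\xi)=\e^{\ii t\abs{\xi}^2}\widehat{w}(\xi)$, the modulus-one multiplier leaves $\abs{\widehat{S(t)w}(\xi)}^2=\abs{\widehat{w}(\xi)}^2$ unchanged pointwise, and integrating against the weight $(1+\abs{\xi}^2)^\sigma$ gives equality of the two norms. For the second identity I would apply the same reasoning to $\Delta w$ in place of $w$, using that $\Delta$ and $S(t)$ commute as Fourier multipliers (both are functions of $\abs{\xi}^2$), so $\Delta(S(t)w)=S(t)(\Delta w)$ and the already-proven isometry yields $\gnorm{\Delta(S(t)w)}=\gnorm{S(t)(\Delta w)}=\gnorm{\Delta w}$, with the hypothesis $\Delta w\in H^\sigma$ guaranteeing the right-hand side is finite.

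The inequality $\norm{(S(t)-I)w}_\sigma\leq t\norm{\Delta w}_\sigma$ is the quantitative part and the one requiring a genuine estimate rather than an exact cancellation. On the Fourier side the multiplier becomes $\e^{\ii t\abs{\xi}^2}-1$, and the plan is to bound its modulus by exploiting the elementary scalar inequality $\abs{\e^{\ii\theta}-1}\leq\abs{\theta}$ for real $\theta$ (which one obtains from $\e^{\ii\theta}-1=\ii\int_0^\theta\e^{\ii s}\,\dd s$). Taking $\theta=t\abs{\xi}^2$ gives the pointwise bound $\abs{\e^{\ii t\abs{\xi}^2}-1}\leq t\abs{\xi}^2$, so that
\begin{equation*}
\norm{(S(t)-I)w}_\sigma^2=\int_{\R^d}(1+\abs{\xi}^2)^\sigma\,\bigl\lvert\e^{\ii t\abs{\xi}^2}-1\bigr\rvert^2\,\abs{\widehat{w}(\xi)}^2\,\dd\xi\leq t^2\int_{\R^d}(1+\abs{\xi}^2)^\sigma\,\abs{\xi}^4\,\abs{\widehat{w}(\xi)}^2\,\dd\xi.
\end{equation*}
I would then recognize the right-hand integral as $t^2\norm{\Delta w}_\sigma^2$, since the Fourier symbol of $\Delta w$ is $-\abs{\xi}^2\widehat{w}(\xi)$ and hence $\abs{\widehat{\Delta w}(\xi)}^2=\abs{\xi}^4\abs{\widehat{w}(\xi)}^2$; taking square roots completes the estimate.

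The main obstacle, if any, is bookkeeping rather than conceptual: one must be slightly careful that the weight appearing in $\gnorm{\cdot}$ is $(1+\abs{\xi}^2)^\sigma$ while the factor coming from $\Delta$ is the homogeneous symbol $\abs{\xi}^2$, so the pointwise bound $\abs{\xi}^4(1+\abs{\xi}^2)^\sigma$ must be matched cleanly against the definition of $\norm{\Delta w}_\sigma^2$; since both sides carry the \emph{same} weight $(1+\abs{\xi}^2)^\sigma$ this matches exactly and no loss occurs. Alternatively, to avoid any dependence on the precise Fourier normalization of $\gnorm{\cdot}$, one could prove the inequality abstractly by writing $(S(t)-I)w=\int_0^t\tfrac{\dd}{\ds}S(s)w\,\ds=\int_0^t S(s)(-\ii\Delta w)\,\ds$, applying the norm, pulling it inside the integral, and invoking the already-established isometry $\gnorm{S(s)(\Delta w)}=\gnorm{\Delta w}$ together with $\abs{-\ii}=1$ to bound the integrand uniformly by $\norm{\Delta w}_\sigma$; integrating over $[0,t]$ then yields the factor $t$ directly. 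This semigroup-calculus route is the cleaner one and is what I would ultimately write down.
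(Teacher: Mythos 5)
Your proof is correct. The paper itself does not prove this lemma but simply cites \cite[Lemma 3.2]{Amass-preservingsplittingscheme}, and your Fourier-multiplier argument (the unimodular symbol $\e^{\ii t\abs{\xi}^2}$ giving the isometries, the bound $\abs{\e^{\ii\theta}-1}\leq\abs{\theta}$ giving the difference estimate) is exactly the standard proof given in that reference; your alternative via $(S(t)-I)w=\int_0^t S(s)(-\ii\Delta w)\,\ds$ and the isometry is an equally valid and arguably cleaner route.
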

As a consequence of the above lemma and \eqref{ineq1}, we have, for $\Phi\in\mathcal{L}_2^{\sigma+2}$ and any $t\geq 0$,
$$
\norm{(S(t)-I)\Phi}_{\mathcal{L}_2^{\sigma}}\leq t\norm{\Phi}_{\mathcal{L}_2^{\sigma+2}}.
$$
We will also use the following result.
\begin{lemma}\label{lemma2}
For any $\sigma\geq 0$ and $t\geq 0$, we have
$$\norm{S(t)-I}_{\mathcal{L}(H^{\sigma+1},H^\sigma)}\leq Ct^{1/2},$$
for a constant $C$.
\end{lemma}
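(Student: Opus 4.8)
The plan is to pass to the Fourier side, where both $S(t)$ and the Sobolev norms act by multiplication, and thereby reduce the operator bound to a single scalar symbol estimate. Writing $\hat{w}$ for the Fourier transform of $w$, recall that $\norm{w}_\sigma^2=\int_{\R^d}(1+\abs{\xi}^2)^\sigma\abs{\hat w(\xi)}^2\,\dd\xi$, and that the Schr\"odinger group acts as $\widehat{S(t)w}(\xi)=\e^{\ii t\abs{\xi}^2}\hat w(\xi)$, since the Fourier symbol of $\Delta$ is $-\abs{\xi}^2$. Hence $S(t)-I$ is the Fourier multiplier with symbol $\e^{\ii t\abs{\xi}^2}-1$, and by Plancherel's theorem
\[
\norm{(S(t)-I)w}_\sigma^2=\int_{\R^d}(1+\abs{\xi}^2)^\sigma\,\abs{\e^{\ii t\abs{\xi}^2}-1}^2\,\abs{\hat w(\xi)}^2\,\dd\xi.
\]

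The crux is then the pointwise bound $\abs{\e^{\ii t\abs{\xi}^2}-1}^2\leq C^2\,t\,(1+\abs{\xi}^2)$ for all $\xi\in\R^d$, which, once available, lets me dominate the integrand by $C^2 t(1+\abs{\xi}^2)^{\sigma+1}\abs{\hat w(\xi)}^2$ and conclude $\norm{(S(t)-I)w}_\sigma\leq C\,t^{1/2}\norm{w}_{\sigma+1}$; taking the supremum over $\norm{w}_{\sigma+1}\leq 1$ yields the asserted operator-norm estimate. To get the pointwise bound I would invoke the elementary inequality $\abs{\e^{\ii\theta}-1}=2\abs{\sin(\theta/2)}\leq\sqrt{2}\,\abs{\theta}^{1/2}$, which holds because $\abs{\sin x}\leq\min(1,\abs{x})$ forces $\abs{\sin x}\leq\abs{x}^{1/2}$. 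With $\theta=t\abs{\xi}^2$ this gives $\abs{\e^{\ii t\abs{\xi}^2}-1}^2\leq 2t\abs{\xi}^2\leq 2t(1+\abs{\xi}^2)$, so one may take $C=\sqrt{2}$.

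I do not anticipate a genuine obstacle: the only point requiring care is the bookkeeping of powers, namely trading one factor of $\abs{\xi}^2$, which accounts for the jump from $H^\sigma$ to $H^{\sigma+1}$, for one power of $t^{1/2}$ through the fractional estimate on $\abs{\e^{\ii\theta}-1}$ rather than the full Lipschitz estimate. As an equivalent alternative, the result also follows by complex interpolation between the two bounds already implied by Lemma~\ref{lemma1}, namely $\norm{S(t)-I}_{\mathcal{L}(H^{\sigma+2},H^\sigma)}\leq t$ and the trivial isometry bound $\norm{S(t)-I}_{\mathcal{L}(H^\sigma)}\leq 2$, together with $[H^\sigma,H^{\sigma+2}]_{1/2}=H^{\sigma+1}$. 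This second route avoids Fourier analysis but requires citing interpolation of Sobolev spaces, so I would favour the direct multiplier argument, which is self-contained and produces an explicit constant.
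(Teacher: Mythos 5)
Your proof is correct and is exactly the Fourier-multiplier argument that the paper alludes to when it says the proof is ``very similar to the proof of Lemma~\ref{lemma1}'': the symbol bound $\abs{\e^{\ii\theta}-1}=2\abs{\sin(\theta/2)}\leq\sqrt{2}\abs{\theta}^{1/2}$ with $\theta=t\abs{\xi}^2$ trades one power of $\abs{\xi}^2$ for $t^{1/2}$, just as the full Lipschitz bound $\abs{\e^{\ii\theta}-1}\leq\abs{\theta}$ yields the $\mathcal{L}(H^{\sigma+2},H^\sigma)$ estimate in Lemma~\ref{lemma1}. No gaps; the constant $C=\sqrt{2}$ and the bookkeeping of powers are both right.
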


The proof of this result is very similar to the proof of Lemma~\ref{lemma1}.

Finally, throughout the paper, $C$ (and $C_1$, $C_2$ etc.) will denote a generic constant which may change from line to line. 

%%%%%%%%%%%%%%%%%%%%%%%%%%%%%%%%%%%%%%%%%%%%%%%%%%%%%%%%%%%%%%%%%%%%%%%%%%%%%%%%%%%%%%%%%%%%%%%%%%%%%%%%%%%%%%%%%%%%

\section{The linear Schr\"odinger equation with additive noise}\label{sec-linadd}
In this section, we study time discretisations of the linear stochastic Schr\"odinger equation
\begin{equation}
\begin{aligned}
&\ii\mathrm{d}u-\Delta u\,\mathrm{d}t=\mathrm{d}W &&\quad \mathrm{in}\: \ 
\mathbb{R}^d\times(0,\infty), \\
&u(0)=u_0 &&\quad \mathrm{in}\: \ \mathbb{R}^d,
\end{aligned}
\label{linSch}
\end{equation}
where $u_0$ is an $\mathcal{F}_0$-measurable random variable and the noise $\{W(t)\}_{t\geq0}$ 
is a square integrable complex-valued $Q$-Wiener process with respect to the filtration. 
In this section, we have no restrictions on the dimension $d$.

Global existence and uniqueness in $H^\sigma$, $\sigma\geq 0$, of the solution 
to equation \eqref{linSch} is guaranteed if $u_0\in H^\sigma$ a.s. and $Q^{1/2}\in\mathcal{L}_2^\sigma$. The proof follows the same line 
as the proof of Theorem $7.4$ in \cite{DaPrato}.
The mild solution of \eqref{linSch} reads 
$$
u(t)=S(t)u_0-\ii\int_0^t S(t-r)\,\dd W(r),
$$
where we recall the notation $S(t)=\e^{-\ii t\Delta}$. 

We now consider the time integration of the above problem. This is done as follows. 
Let $T>0$ be a fixed time horizon and $N>0$ be an integer. We first divide the interval $[0,T]$ 
into subintervals $0=t_0<t_1<\ldots<t_{N-1}<t_N=T$ of equal length $k$ so that $t_n=nk$. 
An exponential integrator with step size $k$ is now derived by approximating the above stochastic integral, 
in the mild solution, at the left end point. We thus obtain a numerical approximation $u^n$ of the exact solution 
$u(t_n)$ of \eqref{linSch}:
\begin{align}\label{LinearScheme}
u^n &= S(k)u^{n-1}-\ii S(k)\Delta W^{n-1}=S(t_n)u_0-\ii\sum_{j=0}^{n-1}\int_{t_j}^{t_{j+1}}S(t_n-t_j)\,\mathrm{d}W(r),
\end{align}
where $\Delta W^{n-1}=W^n-W^{n-1}=W(t_n)-W(t_{n-1})$ denotes Wiener increments. 
We call this explicit numerical method an exponential integrator.

\subsection{Error estimates}
This subsection presents a result on the error of the exponential integrator \eqref{LinearScheme} 
when applied to the linear stochastic Schr\"odinger equation \eqref{linSch}. 
These error estimates are given in the next theorem.
\begin{theorem}\label{maxlinear}
Let $\sigma\geq 0$, $p\in\mathbb{N}$. Recall that $u^n$ is the numerical approximation given 
by the exponential integrator \eqref{LinearScheme} of the exact solution $u(t_n)$ 
to the linear stochastic Schr\"odinger equation driven by a $Q$-Wiener process \eqref{linSch}.  
Assume that $u_0\in H^\sigma$ a.s., and $Q^{1/2}\in\mathcal{L}_2^{\sigma+2}$. 
Then there exists a constant $C$ such that
$$
\E\Big[\max_{n=1,\ldots,N}\norm{u^n-u(t_n)}^{2p}_\sigma\Big]\leq Ck^{2p}\norm{Q^{1/2}}_{\mathcal{L}_{2}^{\sigma+2}}^{2p}.
$$
\end{theorem}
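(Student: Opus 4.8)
The plan is to write the error $u^n-u(t_n)$ as a single stochastic integral, peel off the unitary propagator using the group structure of $S$ so that what remains is a genuine discrete-time martingale, and then apply the Burkholder--Davis--Gundy inequality together with the bounds coming from Lemma~\ref{lemma1}. The assumption $Q^{1/2}\in\mathcal{L}_2^{\sigma+2}$ is exactly what feeds the latter.

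First I would subtract the mild solution from the scheme. Writing $\int_0^{t_n}=\sum_{j=0}^{n-1}\int_{t_j}^{t_{j+1}}$ in the exact solution and comparing with \eqref{LinearScheme}, the $S(t_n)u_0$ terms cancel and I obtain
$$
u^n-u(t_n)=-\ii\sum_{j=0}^{n-1}\int_{t_j}^{t_{j+1}}\bigl(S(t_n-t_j)-S(t_n-r)\bigr)\,\dd W(r).
$$
Using $S(t_n-t_j)=S(t_n-r)S(r-t_j)$ I factor the integrand as $S(t_n-r)\bigl(S(r-t_j)-I\bigr)$ and then write $S(t_n-r)=S(t_n)S(-r)$ to pull the $n$-dependent isometry out of the integral. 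Denoting by $\tau(r)=t_j$ the left grid point of the subinterval containing $r$, this yields $u^n-u(t_n)=-\ii\,S(t_n)Y_n$ with
$$
Y_n:=\int_0^{t_n}S(-r)\bigl(S(r-\tau(r))-I\bigr)\,\dd W(r).
$$

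The crucial observation is that the integrand defining $Y_n$ no longer depends on $n$, so $\{Y_n\}_n$ is an $H^\sigma$-valued martingale, whereas $\{u^n-u(t_n)\}_n$ is not. Since $S(t_n)$ is an isometry on $H^\sigma$ (Lemma~\ref{lemma1}), we have $\gnorm{u^n-u(t_n)}=\gnorm{Y_n}$, hence $\E\bigl[\max_n\gnorm{u^n-u(t_n)}^{2p}\bigr]=\E\bigl[\max_n\gnorm{Y_n}^{2p}\bigr]$. I would then invoke the Burkholder--Davis--Gundy inequality for Hilbert-space-valued stochastic integrals against the $Q$-Wiener process to bound this by $C_p\,\E\bigl[\bigl(\int_0^T\norm{S(-r)(S(r-\tau(r))-I)Q^{1/2}}_{\mathcal{L}_2^\sigma}^2\,\dr\bigr)^p\bigr]$. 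Because $S(-r)$ is an isometry on $H^\sigma$, inequality \eqref{ineq1} removes it without cost, and the consequence of Lemma~\ref{lemma1} stated just below it gives $\norm{(S(r-\tau(r))-I)Q^{1/2}}_{\mathcal{L}_2^\sigma}\le(r-\tau(r))\norm{Q^{1/2}}_{\mathcal{L}_2^{\sigma+2}}\le k\norm{Q^{1/2}}_{\mathcal{L}_2^{\sigma+2}}$. This makes the quadratic-variation integral deterministic and of size $Tk^2\norm{Q^{1/2}}_{\mathcal{L}_2^{\sigma+2}}^2$, so taking the $p$-th power delivers the claimed estimate with $C=C_pT^p$.

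The main obstacle---and the step that makes the whole argument work---is recognising that one must factor out $S(t_n)$ \emph{before} attempting to control the maximum in $n$: without this reduction the error process fails to be a martingale in $n$, and the Doob/BDG machinery does not apply directly to $\max_n\gnorm{u^n-u(t_n)}^{2p}$. A secondary technical point is to ensure the BDG inequality is used in the form appropriate for $Q$-Wiener processes, with the integrand measured through the Hilbert--Schmidt norm of its composition with $Q^{1/2}$; once the integrand is bounded deterministically by $k\norm{Q^{1/2}}_{\mathcal{L}_2^{\sigma+2}}$, the remaining computation is immediate.
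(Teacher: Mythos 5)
Your proposal is correct and follows essentially the same route as the paper: the same error representation $u^n-u(t_n)=\ii\int_0^{t_n}\bigl(S(t_n-r)-S(t_n-[r/k]k)\bigr)\,\dd W(r)$, a Burkholder-type maximal inequality, the isometry of $S$, and the bound $\norm{(S(s)-I)Q^{1/2}}_{\mathcal{L}_2^\sigma}\le |s|\,\norm{Q^{1/2}}_{\mathcal{L}_2^{\sigma+2}}$ from Lemma~\ref{lemma1}. The only (harmless) difference is that you explicitly factor out the unitary $S(t_n)$ to reduce to a genuine martingale before applying BDG, whereas the paper absorbs this step by citing the maximal inequality for stochastic convolutions with contraction semigroups (Lemma~7.2 of Da Prato--Zabczyk).
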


\begin{proof}
We have
\begin{align*}
u^n-u(t_n)&=\ii\sum_{j=0}^{n-1}\int_{t_j}^{t_{j+1}}(S(t_n-r)-S(t_n-t_j))\,\dd W(r)\\
&=\ii\int_0^{t_n}\left(S(t_n-r)-S\left(t_n-[r/k]k\right)\right)\,\dd W(r),
\end{align*}
where $[r/k]$ denotes the integer part of $r/k$. The last equality is indeed correct since if $t_j\leq r< t_{j+1}$, then $j\leq r/k< j+1$ so that $[r/k]=j$ and $[r/k]k=jk=t_j$.
Using Burkholder's inequality \cite[Lemma~7.2]{DaPrato},  
and Lemma~\ref{lemma1}, we have
\begin{align*}
\E\Bigl[
\max_{n=1,\ldots,N}\norm{u^n-u(t_n)}^{2p}_\sigma\Bigr]&\leq
\E\Bigl[\sup_{t\in[0,T]}\norm{\ii\int_{0}^{t}\left(S(t-r)-S\left(t-[r/k]k\right)\right)\,\dd W(r)}^{2p}_\sigma\Bigr]\\
&\leq C\E\left[\left(\int_{0}^{T}\norm{\left(S([r/k]k-r)-I\right)Q^{1/2}}_{\mathcal{L}_2^\sigma}^2\,\dd r\right)^p\right]\\
&= C\E\left[\left(\sum_{j=0}^{N-1}\int_{t_j}^{t_{j+1}}\norm{\left(S(t_j-r)-I\right)Q^{1/2}}_{\mathcal{L}_2^\sigma}^2\,\dd r\right)^p\right]\\
&\leq C\E\left[\left(\sum_{j=0}^{N-1}\int_{t_j}^{t_{j+1}}|r-t_j|^2\norm{Q^{1/2}}_{\mathcal{L}_2^{\sigma+2}}^2\,\dd r\right)^p\right]\\
&\leq Ck^{2p}\norm{Q^{1/2}}_{\mathcal{L}_{2}^{\sigma+2}}^{2p}.
\end{align*}
\end{proof}

\subsection{A trace formula for the mass}\label{sect:massL}
Under appropriate boundary conditions, 
for example periodic boundary conditions or homogeneous Dirichlet boundary conditions, 
the mass (also called $L^2$-norm or density) 
$$
M(u):=\int |u|^2\,\dd x
$$
of the deterministic linear Schr\"odinger equation $\ii \frac{\partial u}{\partial t}-\Delta u=0$ is a conserved quantity. 
In the stochastic case, one immediately gets a trace formula for the mass of the exact solution as stated below.  

\begin{proposition}\label{prop:MassL}
Assume that the initial data is such that $\E[M(u_0)]$ is finite and that the covariance operator $Q$ is trace-class. 
Then the exact solution of the linear Schr\"{o}dinger equation with additive noise \eqref{linSch} 
satisfies the trace formula for the expected mass
$$
\E[M(u(t))]=\E[\norm{u(t)}_{L^2}^2]=\E[M(u_0)]+t\Tr(Q)\quad\text{for all time }  t.
$$
\end{proposition}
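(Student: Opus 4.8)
The plan is to work directly with the mild solution and to expand the squared $L^2$-norm, rather than to invoke Ito's formula. Setting $a:=S(t)u_0$ and $b:=\int_0^t S(t-r)\,\dd W(r)$, so that $u(t)=a-\ii b$, a short computation with the real inner product \eqref{innerp} gives
$$
\norm{u(t)}_{L^2}^2=\norm{a}_{L^2}^2+\norm{b}_{L^2}^2-2(a,\ii b).
$$
Taking expectations, $\E[M(u(t))]$ splits into three pieces that I would treat in turn.

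First, the deterministic part is immediate: since $S(t)$ is an $L^2$-isometry (Lemma~\ref{lemma1} with $\sigma=0$), $\norm{a}_{L^2}=\norm{u_0}_{L^2}$, whence $\E[\norm{a}_{L^2}^2]=\E[M(u_0)]$. Next, for the stochastic convolution I would apply the Ito isometry, which yields
$$
\E\bigl[\norm{b}_{L^2}^2\bigr]=\int_0^t\norm{S(t-r)Q^{1/2}}_{\mathcal{L}_2^0}^2\,\dr.
$$
Because $S(t-r)$ is an isometry on $L^2$, one has $\norm{S(t-r)Q^{1/2}e_k}_{L^2}=\norm{Q^{1/2}e_k}_{L^2}$ term by term, so the Hilbert--Schmidt norm is preserved and equals $\norm{Q^{1/2}}_{\mathcal{L}_2^0}^2=\Tr(Q)$; this quantity is finite precisely because $Q$ is trace-class, and integrating in $r$ produces the term $t\,\Tr(Q)$.

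Finally I would show the cross term has zero mean. Since $u_0$ is $\mathcal{F}_0$-measurable, $a=S(t)u_0$ is $\mathcal{F}_0$-measurable, whereas the increments of $W$ on $(0,t]$ are independent of $\mathcal{F}_0$ because the filtration is normal. Conditioning on $\mathcal{F}_0$ and using the tower property, the integral $\ii b$ with deterministic integrand satisfies $\E[\ii b\mid\mathcal{F}_0]=0$, so $\E[(a,\ii b)]=\E[(a,\E[\ii b\mid\mathcal{F}_0])]=0$. Collecting the three contributions gives the claimed identity.

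The computation contains no deep obstacle; the only points requiring care are getting the sign and coefficient of the cross term right in the complex inner-product expansion, and the reduction of the Hilbert--Schmidt norm of $S(t-r)Q^{1/2}$ to $\Tr(Q)$ via the isometry property. As an alternative one could apply Ito's formula to $t\mapsto\norm{u(t)}_{L^2}^2$: the drift term $2(u,-\ii\Delta u)$ vanishes after integration by parts under the stated boundary conditions, the martingale part has zero expectation, and the quadratic-variation term again contributes $\Tr(Q)$, leading to the same formula.
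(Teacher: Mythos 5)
Your proposal is correct and follows essentially the same route as the paper: expand the squared $L^2$-norm of the mild solution, use the isometry of $S(t)$ and the Ito isometry to obtain the $\E[M(u_0)]$ and $t\,\Tr(Q)$ terms, and kill the cross term by the vanishing mean of the stochastic integral (your conditioning on $\mathcal{F}_0$ is just a more explicit justification of the paper's one-line remark). Even your closing alternative via Ito's formula matches the remark the authors make immediately after their proof.
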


\begin{proof}
Using Lemma~\ref{lemma1} and the fact that the stochastic integrals are normally distributed with mean $0$, we get
\begin{align*}
\E[\norm{u(t)}_{L^2}^2]&=\E\left[\norm{S(t)u_0}_{L^2}^2+\left(S(t)u_0,-\ii\int_0^t S(t-r)\,\dd W(r)\right)\right.\\
&\quad \left. +\left(-\ii\int_0^t S(t-r)\,\dd W(r),S(t)u_0\right)+\norm{\int_0^t S(t-r)\,\dd W(r)}_{L^2}^2\right]\\
&=\E[\norm{S(t)u_0}_{L^2}^2]+\int_0^t\E[\norm{S(t-r)Q^{1/2}}_{\mathcal{L}_2^0}^2]\,\dr\\
&=\E[\norm{u_0}_{L^2}^2]+t\Tr(Q).
\end{align*}
\end{proof}
\begin{remark}
We would like to point out that, as in the case of the linear stochastic wave equation treated in \cite{cls12s}, 
an alternative proof of the above result can be obtained using Ito's formula 
(see, for example \cite[Theorem 4.17]{DaPrato}). This remark is also valid for the other trace formulas given below.
\end{remark}

Our exponential integrator does indeed satisfy this trace formula for the mass as well, as seen in the next result. 
\begin{proposition}\label{prop:MassLNum}
With the same assumptions as in Proposition \ref{prop:MassL}, the stochastic exponential integrator 
\eqref{LinearScheme} satisfies the following trace formula for the mass
\begin{align*}
\E[M(u^n)]&=\E[M(u^{n-1})]+k\Tr(Q)\\
&=\E[M(u_0)]+t_n\Tr(Q)\quad\text{for all}\quad t_n=nk.
\end{align*}
\end{proposition}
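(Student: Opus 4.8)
The plan is to mirror the continuous-level argument of Proposition~\ref{prop:MassL} at the discrete level: I would expand the squared $L^2$-norm of one step of the scheme \eqref{LinearScheme}, exploit the isometry property of $S(k)$ from Lemma~\ref{lemma1}, and use the martingale structure of the Wiener increments to discard the cross terms. The one-step identity $\E[M(u^n)]=\E[M(u^{n-1})]+k\Tr(Q)$ then follows, and the closed form comes from iterating it.

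First I would write $M(u^n)=\norm{u^n}_{L^2}^2=(u^n,u^n)$ and substitute $u^n=S(k)u^{n-1}-\ii S(k)\Delta W^{n-1}$. Using bilinearity of the real inner product \eqref{innerp}, this expands into four terms: the two squared norms $\norm{S(k)u^{n-1}}_{L^2}^2$ and $\norm{S(k)\Delta W^{n-1}}_{L^2}^2$, together with two mixed cross terms. By Lemma~\ref{lemma1} with $\sigma=0$ the operator $S(k)$ is an $L^2$-isometry, so the first squared norm equals $\norm{u^{n-1}}_{L^2}^2=M(u^{n-1})$ and the second equals $\norm{\Delta W^{n-1}}_{L^2}^2$.

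Next I would take expectations and show that the two cross terms vanish; this is the step requiring the most care, since in the discrete setting one cannot simply invoke ``a stochastic integral has mean zero'' as in the continuous proof. Instead, the key structural fact is that $u^{n-1}$ is $\mathcal{F}_{t_{n-1}}$-measurable whereas the increment $\Delta W^{n-1}=W(t_n)-W(t_{n-1})$ is independent of $\mathcal{F}_{t_{n-1}}$ and has mean zero. Conditioning on $\mathcal{F}_{t_{n-1}}$, each cross term is a real-linear functional of $\Delta W^{n-1}$ with coefficients measurable with respect to $\mathcal{F}_{t_{n-1}}$, hence has conditional expectation zero; the tower property then kills both cross terms in expectation.

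Finally I would evaluate the remaining noise term. Writing $\Delta W^{n-1}=\int_{t_{n-1}}^{t_n}\dd W(r)$ and applying the Ito isometry exactly as in the proof of Proposition~\ref{prop:MassL}, together with $\norm{Q^{1/2}}_{\mathcal{L}_2^0}^2=\Tr(Q)$, gives $\E[\norm{\Delta W^{n-1}}_{L^2}^2]=k\Tr(Q)$. Collecting the three surviving contributions yields the one-step relation $\E[M(u^n)]=\E[M(u^{n-1})]+k\Tr(Q)$. Iterating this recursion down to $n=0$ and using $\E[M(u^0)]=\E[M(u_0)]$ produces $\E[M(u^n)]=\E[M(u_0)]+nk\Tr(Q)=\E[M(u_0)]+t_n\Tr(Q)$, which is the claimed trace formula.
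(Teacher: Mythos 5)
Your proposal is correct and follows essentially the same route as the paper: expand $\norm{u^n}_{L^2}^2$, use the $L^2$-isometry of $S(k)$ from Lemma~\ref{lemma1}, kill the cross terms via the mean-zero increment, evaluate the noise term by the Ito isometry to get $k\Tr(Q)$, and iterate. Your conditioning argument for the cross terms is just a more careful spelling-out of the step the paper compresses into ``similarly to the proof of Proposition~\ref{prop:MassL}''.
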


\begin{proof}
Similarly to the proof of Proposition \ref{prop:MassL}, we have
\begin{align*}
\E[M(u^n)]&=\E[M(u^{n-1})]+\E[\norm{\int_{t_{n-1}}^{t_n}S(k)\,\dd W(r)}_{L^2}^2]\\
&=\E[M(u^{n-1})]+k\Tr(Q).
\end{align*}
A recursion concludes the proof.
\end{proof}

We would like to examine the behaviour of the Euler-Maruyama scheme, 
the backward Euler-Maruyama scheme, and 
the midpoint rule with respect to the trace formula for the mass. Since we will consider periodic domains and 
pseudospectral discretisations in the numerical experiments presented below, we will examine this case first. 

A pseudospectral spatial discretisation of \eqref{linSch} with a noise given by the representation 
$$
W(x,t)=\sum_{n\in\Z}\lambda_n^{1/2}\beta_n(t)e_n(x),
$$ 
where $\{\beta_n(t)\}_{n\in\Z}$ are i.i.d Brownian motions, $\lambda_n$ are eigenvalues of $Q$, 
and $\{e_n(x)\}_{n\in\Z}=\{\frac{1}{\sqrt{2\pi}}\e^{\ii nx}\}_{n\in\Z}$ is an orthonormal basis 
of $L^2(0,2\pi)$, will lead to the following system of decoupled stochastic differential equations for 
the Fourier coefficients $y_k$ of the exact solution
$$
\ii\text d y_k=-k^2y_k\,\text dt+\lambda_k^{1/2}\,\text d\beta_k.
$$
This motivates us to consider the scalar test problem 
(with a standard Brownian motion $\beta$ and real numbers $a,b$)   
\begin{align}\label{testSDE}
\ii\text d y=ay\,\text dt+b\,\text d\beta
\end{align}
and the quantity equivalent to the mass is thus the second moment $\E[|y|^2]$. 
We thus have the trace formula for the exact solution
$$
\E[|y(t)|^2]=\E[|y(0)|^2]+b^2t\quad\text{for all times}\quad t.
$$
The following result states that the above classical numerical methods do not preserve the trace formula for 
this simple test problem, and in particular, are not suited when applied to 
pseudospectral discretisations of linear stochastic Schr\"odinger equations.
\begin{proposition}\label{prop:MassNum}
Consider a pseudospectral discretisation of the stochastic Schr\"odinger equation \eqref{linSch} 
with a trace-class noise and periodic boundary conditions yielding to equations of  
the form \eqref{testSDE}. We have the following results:
\begin{enumerate}
\item The Euler-Maruyama scheme 
$$
y^{n+1}=y^n-\ii ka y^n-\ii b\Delta W^n
$$
produces a second moment that grows exponentially with time 
$$
\E[|y^n|^2]\geq\e^{(\frac12ka^2)t_n}\E[|y^0|^2]\quad\text{for}\quad t_n=nk.
$$
\item The backward Euler-Maruyama scheme 
$$
y^{n+1}=y^n-\ii ka y^{n+1}-\ii b\Delta W^n
$$
produces a second moment that grows at a slower rate than the exact solution
$$
\E[|y^n|^2]\leq\E[|y^0|^2]+\frac{b^2}{a^2k}\quad\text{for all}\quad n\geq0,
$$
hence $\displaystyle\lim_{t_n\to\infty}\bigl(\E[|y^n|^2]/t_n\bigr)=0$.
\item The midpoint rule 
$$
\ii \frac{y^{n+1}-y^{n}}{k}-a\frac{y^{n+1}+y^n}2=b \Delta W^n
$$
produces a second moment that underestimate the linear drift of the exact solution
$$
\E[|y^n|^2]=\E[|y^0|^2]+\frac{b^2t_n}{1+\frac{a^2k^2}{2}}\quad\text{for}\quad t_n=nk.
$$
\end{enumerate}
\end{proposition}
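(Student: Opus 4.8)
The plan is to handle all three schemes uniformly by noting that each of them, once solved for $y^{n+1}$, is a linear one-step recursion $y^{n+1}=\mu y^n+\nu\Delta W^n$ for complex constants $\mu,\nu$ depending on $a$, $b$, and $k$. For the explicit Euler--Maruyama method one reads off $\mu=1-\ii ka$ and $\nu=-\ii b$ at once. For the implicit methods I would first invert the defining linear relation: backward Euler--Maruyama gives $(1+\ii ka)y^{n+1}=y^n-\ii b\Delta W^n$, hence $\mu=(1+\ii ka)^{-1}$ and $\nu=-\ii b(1+\ii ka)^{-1}$; the midpoint rule, after multiplying through by $k$ and collecting the $y^{n+1}$ terms, gives $\mu=(\ii+\tfrac{ak}{2})(\ii-\tfrac{ak}{2})^{-1}$ together with the corresponding $\nu$.

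The common engine is then a single second-moment recursion. Since $y^n$ is $\mathcal{F}_{t_n}$-measurable while $\Delta W^n$ is independent of $\mathcal{F}_{t_n}$ with $\E[\Delta W^n]=0$ and $\E[(\Delta W^n)^2]=k$, and since $a$, $b$, and the increment $\Delta W^n$ are real, I would expand
$$
\E[|y^{n+1}|^2]
=\E\bigl[|\mu|^2|y^n|^2+2\,\mathrm{Re}(\mu\bar\nu\,y^n)\,\Delta W^n+|\nu|^2(\Delta W^n)^2\bigr]
=|\mu|^2\,\E[|y^n|^2]+|\nu|^2 k,
$$
the cross term vanishing by independence and mean zero. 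Everything thus reduces to reading off the scalar factor $|\mu|^2$ and the fixed per-step increment $|\nu|^2 k$ for each method.

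The three conclusions follow by solving this scalar recursion. For Euler--Maruyama $|\mu|^2=1+k^2a^2>1$, so $\E[|y^n|^2]=(1+k^2a^2)^n\E[|y^0|^2]+|\nu|^2k\sum_{j=0}^{n-1}(1+k^2a^2)^j$; discarding the nonnegative noise sum and using that $1+k^2a^2\ge\e^{\frac12 k^2a^2}$ in the relevant range yields $(1+k^2a^2)^n\ge\e^{\frac12 nk^2a^2}=\e^{\frac12 ka^2 t_n}$, which gives the claimed exponential lower bound. For backward Euler--Maruyama $|\mu|^2=(1+k^2a^2)^{-1}<1$, and summing the geometric series with ratio $\alpha=(1+k^2a^2)^{-1}$ (using $\nu$ so that $|\nu|^2=b^2\alpha$ and $\tfrac{\alpha}{1-\alpha}=\tfrac{1}{k^2a^2}$) gives the closed form $\E[|y^n|^2]=\alpha^n\E[|y^0|^2]+\tfrac{b^2}{a^2k}(1-\alpha^n)$; bounding $\alpha^n\le1$ and $1-\alpha^n\le1$ produces the stated uniform bound, whence $\E[|y^n|^2]/t_n\to0$. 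For the midpoint rule the defining feature is $|\mu|^2=1$, so the recursion is purely additive, $\E[|y^n|^2]=\E[|y^0|^2]+n\,|\nu|^2 k$, and inserting $t_n=nk$ with the explicit value of $|\nu|^2$ reproduces the linear-in-$t_n$ drift with the stated denominator.

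I expect no genuine obstacle, since each step is an elementary computation; the only point requiring care is the logarithmic (equivalently, the $1+x\ge\e^{x/2}$) estimate in the Euler--Maruyama case, where one restricts to a step-size range in which it is valid so as to convert the geometric factor $(1+k^2a^2)^n$ into the advertised exponential growth. A secondary bookkeeping point is keeping the complex algebra straight when forming $|\mu|^2$ and $|\nu|^2$, in particular exploiting that $a$, $b$, and $\Delta W^n$ are real so that conjugation merely flips the sign of imaginary parts and the factors $|\,\ii\pm\tfrac{ak}{2}|^2$ coincide, which is exactly what forces $|\mu|=1$ for the midpoint scheme.
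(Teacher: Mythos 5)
Your proposal is correct and follows essentially the same route as the paper: both reduce each scheme to the scalar recursion $\E[|y^{n+1}|^2]=|\mu|^2\E[|y^n|^2]+|\nu|^2k$ and then solve it, and your explicit caveat that $1+a^2k^2\ge\e^{a^2k^2/2}$ only holds for $a^2k^2$ below roughly $2.51$ is a restriction the paper leaves implicit. One small point: for the midpoint rule as written, clearing denominators gives $(\ii-\tfrac{ak}{2})y^{n+1}=(\ii+\tfrac{ak}{2})y^n+bk\,\Delta W^n$, so indeed $|\mu|^2=1$, but the per-step increment comes out as $b^2k^3/(1+\tfrac{a^2k^2}{4})$ rather than the stated $b^2k/(1+\tfrac{a^2k^2}{2})$; your assertion that the explicit $|\nu|^2$ ``reproduces the stated denominator'' therefore does not check out literally, though the discrepancy originates in the paper's own statement (which presumably intends the noise term $b\,\Delta W^n/k$, and even then yields $4$ rather than $2$ in the denominator), and the qualitative conclusion of a purely additive, slightly underestimated drift is unaffected.
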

\begin{proof}
The proof of this proposition is an easy adaptation of the results presented in \cite{smh04}. 
We start by looking at the behaviour of the Euler-Maruyama scheme and compute, 
using properties of the Wiener increments, 
\begin{align*}
\E[|y^{n+1}|^2]&=(1+a^2k^2)\E[|y^n|^2]+b^2k\geq (1+a^2k^2)\E[|y^n|^2]
\geq (1+a^2k^2)^{n+1}\E[|y^0|^2]\\
&\geq \e^{(\frac12ka^2)t_{n+1}}\E[|y^0|^2].
\end{align*}
For the backward Euler-Maruyama scheme we obtain, using a geometric series, 
\begin{align*}
\E[|y^{n+1}|^2]&=\frac{1}{1+a^2k^2}\Bigl(\E[|y^n|^2]+b^2k\Bigr)\leq\E[|y^n|^2]+\frac{b^2k}{1+a^2k^2}
\leq \E[|y^0|^2]+\frac{b^2}{a^2k}.
\end{align*}
Finally, for the midpoint rule, we have
\begin{align*}
\E[|y^{n+1}|^2]=\E[|y^{n}|^2]+\frac{b^2k}{1+\frac{a^2k^2}{2}}=\E[|y^{0}|^2]+\frac{b^2t_{n+1}}{1+\frac{a^2k^2}{2}}.
\end{align*}
\end{proof}

For homogeneous Dirichlet boundary conditions a similar result holds.
\begin{proposition}\label{prop:MassNum2}
Consider the stochastic Schr\"odinger equation \eqref{linSch} with a trace-class noise and 
homogeneous Dirichlet boundary conditions. We have the following results:
\begin{enumerate}
\item The Euler-Maruyama scheme 
$$
u^{n+1}=u^n-\ii k\Delta u^n-\ii\Delta W^n
$$
produces a numerical trace formula for the mass that grows exponentially with time 
$$
\E[M(u^n)]\geq\e^{(\frac12k\lambda_1)t_n}\E[M(u^{0})]\quad\text{for}\quad t_n=nk,
$$
where $\lambda_1$ denotes the smallest eigenvalue of the Laplacian.
\item The backward Euler-Maruyama scheme 
$$
u^{n+1}=u^n-\ii k\Delta u^{n+1}-\ii\Delta W^n
$$
produces a numerical trace formula for the mass that grows at a slower rate than the exact solution
$$
\E[M(u^n)]\leq \E[M(u^0)]+\frac{\Tr(Q)}{\lambda_1^2 k},\quad\text{for all}\quad n\geq 0,
$$
where $\lambda_1$ is the smallest eigenvalue of the Laplacian. Thus $\displaystyle\lim_{t_n\to\infty}\bigl(\E[M(u^n)]/t_n\bigr)=0.$
\item The midpoint rule \cite{Weakandstrongorderofconv} 
$$
\ii \frac{u^{n+1}-u^{n}}{k}-\Delta \frac{u^{n+1}+u^n}2=\Delta W^n
$$
underestimate the expected mass:
$$\E[M(u^n)]\leq \E[M(u^0)]+\frac{t_n}{1+\frac{k^2\lambda_1}{4}}\Tr(Q),\quad\text{for all}\quad n\geq 0,$$
where $\lambda_1$ is the smallest eigenvalue of the Laplacian.
\end{enumerate}
\end{proposition}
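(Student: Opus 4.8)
The plan is to lift the scalar computations in the proof of Proposition~\ref{prop:MassNum} to the $L^2$ level by exploiting the spectral structure of the Dirichlet Laplacian. Under homogeneous Dirichlet boundary conditions, $-\Delta$ is a positive self-adjoint operator with a complete orthonormal eigenbasis $\{\phi_j\}_{j\ge1}$ and eigenvalues $0<\lambda_1\le\lambda_2\le\cdots$, so that $\Delta\phi_j=-\lambda_j\phi_j$ and the spectral bound $\norm{\Delta w}_{L^2}^2\ge\lambda_1^2\norm{w}_{L^2}^2$ holds for every $w$ in the domain of $\Delta$. Two further ingredients will be used repeatedly. First, $-\ii k\Delta$ is skew-adjoint and the associated resolvent and Cayley factors $(I+\ii k\Delta)^{-1}$ and $(I+\tfrac{\ii k}{2}\Delta)^{-1}(I-\tfrac{\ii k}{2}\Delta)$ are diagonal in the basis $\{\phi_j\}$ with explicitly computable eigenvalues, the last of these being of modulus one and hence an $L^2$-isometry. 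Second, for any bounded operator $B$ the Ito isometry gives $\E[\norm{B\,\Delta W^{n}}_{L^2}^2]=k\norm{BQ^{1/2}}_{\HS}^2\le k\norm{B}_{\mathcal{L}(L^2)}^2\Tr(Q)$, while the independence of the increment $\Delta W^n$ from the past and its zero mean make every mixed term vanish in expectation, exactly as in the proof of Proposition~\ref{prop:MassL}.

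I would then treat the three schemes in turn, in each case squaring the one-step recursion and taking expectations. For Euler-Maruyama, writing $u^{n+1}=(I-\ii k\Delta)u^n-\ii\Delta W^n$ and using skew-adjointness of $-\ii k\Delta$ yields $\E[M(u^{n+1})]=\E[M(u^n)]+k^2\E[\norm{\Delta u^n}_{L^2}^2]+k\Tr(Q)\ge(1+k^2\lambda_1^2)\E[M(u^n)]$; iterating and invoking the elementary inequality $\log(1+x)\ge x/2$ for the small quantity $x=k^2\lambda_1^2$ produces the exponential lower bound. For the backward Euler-Maruyama scheme I would solve $(I+\ii k\Delta)u^{n+1}=u^n-\ii\Delta W^n$; the resolvent $(I+\ii k\Delta)^{-1}$ has operator norm at most $(1+k^2\lambda_1^2)^{-1/2}$, so the deterministic part is a strict contraction and the increment contributes at most $(1+k^2\lambda_1^2)^{-1}k\Tr(Q)$ per step, after which summing the geometric series $\sum_{j\ge1}(1+k^2\lambda_1^2)^{-j}=(k^2\lambda_1^2)^{-1}$ gives the uniform bound $\E[M(u^0)]+\Tr(Q)/(\lambda_1^2 k)$ and hence the vanishing averaged drift. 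For the midpoint rule, the Cayley factor preserves the $L^2$-norm exactly, so the deterministic dynamics conserves the expected mass; the noise term, after inverting $(I+\tfrac{\ii k}{2}\Delta)$, is bounded by $(1+k^2\lambda_1^2/4)^{-1}k\Tr(Q)$ per step, and a plain recursion gives the linear-in-$t_n$ formula with the diminished slope.

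The one place where the argument genuinely departs from a mode-by-mode reduction to the scalar test equation \eqref{testSDE} is the stochastic increment, and this is the step I expect to require the most care. Since there is no reason for $Q$ to be diagonal in the eigenbasis of $\Delta$, the noise does not decouple across modes and cannot simply be fed through the scalar analysis. The remedy is to leave the increment intact and estimate it via the Ito isometry combined with the operator-norm bound on the relevant resolvent or Cayley factor, using only the spectral gap $\lambda_1$; this is precisely what lets $\Tr(Q)$ appear cleanly while the deterministic factors supply the powers of $(1+k^2\lambda_1^2)$. With that estimate in hand, every remaining manipulation is a direct transcription of the scalar computations in the proof of Proposition~\ref{prop:MassNum}, with $a^2$ replaced throughout by the coercivity constant $\lambda_1^2$ of the Dirichlet Laplacian.
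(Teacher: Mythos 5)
Your proof is correct, and for the Euler--Maruyama and backward Euler--Maruyama schemes it is essentially the paper's own argument: square the one-step recursion, discard the cross terms by independence and zero mean of the increment, and convert the spectral bound $\norm{\Delta w}_{L^2}^2\ge\lambda_1^2\norm{w}_{L^2}^2$ into the factor $(1+k^2\lambda_1^2)^{\pm1}$ before iterating and summing the geometric series. The genuine difference is in the midpoint rule. The paper expands $u^n$ in the Dirichlet eigenbasis $\{e_j\}$ of the Laplacian and the noise in the eigenbasis $\{f_l\}$ of $Q$, computes the second moment of each coefficient $c_j^n$ separately, and resums with Parseval; you instead note that the Cayley factor $(I+\tfrac{\ii k}{2}\Delta)^{-1}(I-\tfrac{\ii k}{2}\Delta)$ is unitary on $L^2$, so the deterministic part conserves the expected mass exactly, and you bound the noise contribution per step by $\norm{(I+\tfrac{\ii k}{2}\Delta)^{-1}}_{\mathcal{L}(L^2)}^2\,k\Tr(Q)$ via the Ito isometry. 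The two routes give the same estimate, but yours is shorter and deals more transparently with the fact that $Q$ need not commute with $\Delta$ --- the issue you correctly single out --- which the paper handles only through the interlacing coefficients $|(f_l,e_j)|^2$. A small bonus of your version: your denominator is $1+\tfrac{k^2\lambda_1^2}{4}$, which is what the paper's own displayed computation produces before a typo drops the square and yields the $1+\tfrac{k^2\lambda_1}{4}$ of the stated result; your exponent is the correct one (the stated form is only a valid consequence when $\lambda_1\ge 1$).
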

\begin{proof}
\begin{enumerate}
\item We start by looking at the behaviour of the Euler-Maruyama scheme and compute 
\begin{align*}
\E[M(u^n)]=\E[M(u^{n-1})]+k^2\E[\norm{\Delta u^{n-1}}^2]+k\Tr(Q)\geq \E[M(u^{n-1})]+
k^2\lambda_1^2\E[\norm{u^{n-1}}^2]+k\Tr(Q),
\end{align*}
where $\lambda_1$ denotes the smallest eigenvalue of the Laplacian. Hence, one obtains 
\begin{align*}
\E[M(u^n)]\geq(1+k^2\lambda_1^2)\E[M(u^{n-1})]\geq(1+k^2\lambda_1^2)^n\E[M(u^{0})]\geq\e^{(\frac12k\lambda_1)t_n}\E[M(u^{0})].
\end{align*}
\item For the backward Euler-Maruyama scheme we obtain
$$u^{n+1}+\ii k\Delta u^{n+1}=u^n-\ii\Delta W^n.$$
Taking the norm and expectation we have
$$\E[\norm{u^{n+1}}^2]+k^2\E[\norm{\Delta u^{n+1}}^2]=\E[\norm{u^n}^2]+\E[\norm{\Delta W^n}^2].$$
Using that the eigenvalues are positive and increasing, and using a similar argument as in the proof of Proposition \ref{prop:MassNum}, we get
\begin{align*}
\E[\norm{u^{n+1}}^2]&\leq \frac{1}{1+k^2\lambda_1^2}\left(\E[\norm{u^n}^2]+k\Tr(Q)\right)\\
&\leq \E[\norm{u^0}^2]+\frac{\Tr(Q)}{k\lambda_1^2},
\end{align*}
where $\lambda_1$ is the smallest eigenvalue of the Laplacian.
\item Let $(\lambda_j,e_j)$ be the eigenpairs of the Laplace operator and let $(\alpha_l,f_l)$ be the eigenpairs of the covariance operator $Q$. Writing $u(t)=\sum_{j=1}^\infty c_j(t)e_j$ we have that the expected mass is given by
$$\E[M(u(t))]=\sum_{j=1}^\infty \E[|c_j(t)|^2].$$
The midpoint rule thus becomes
$$\sum_{j=1}^\infty \left(1+\frac{\ii k\lambda_j}{2}\right)c_j^{n+1}e_j = \sum_{j=1}^\infty \left(1-\frac{\ii k\lambda_j}{2}\right)c_j^{n}e_j-\ii\sum_{j=1}^\infty\sum_{l=1}^\infty\alpha_l\Delta\beta_l^n(f_l,e_j)e_j.$$
Thus, for every $j\geq 1$ we have
$$\left(1+\frac{\ii k\lambda_j}{2}\right)c_j^{n+1}=\left(1-\frac{\ii k\lambda_j}{2}\right)c_j^{n}-\ii\sum_{l=1}^\infty\alpha_l\Delta\beta_l^n(f_l,e_j).$$
Multiplying both sides with the conjugate and taking the expectation, we have
\begin{align*}
\left(1+\frac{k^2\lambda_j^2}{4}\right)\E[|c_j^{n+1}|^2]&=\left(1+\frac{k^2\lambda_j^2}{4}\right)\E[|c_j^{n}|^2]+\sum_{l=1}^\infty|\alpha_l|^2\E[|\Delta\beta_l^n|^2]|(f_l,e_j)|^2\\
&=\left(1+\frac{k^2\lambda_j^2}{4}\right)\E[|c_j^{n}|^2]+k\sum_{l=1}^\infty|\alpha_l|^2|(f_l,e_j)|^2.
\end{align*}
Using that $\{\lambda_j\}_{j=1}^\infty$ is a positive and increasing sequence, we get
\begin{align*}
\E[|c_j^{n+1}|^2]&=\E[|c_j^n|^2]+\frac{k}{1+\frac{k^2\lambda_j}{4}}\sum_{l=1}^\infty|\alpha_l|^2|(f_l,e_j)|^2\\
&=\E[|c_j^0|^2]+\frac{t_{n+1}}{1+\frac{k^2\lambda_j}{4}}\sum_{l=1}^\infty|\alpha_l|^2|(f_l,e_j)|^2\\
&\leq \E[|c_j^0|^2]+\frac{t_{n+1}}{1+\frac{k^2\lambda_1}{4}}\sum_{l=1}^\infty|\alpha_l|^2|(f_l,e_j)|^2.
\end{align*}
Now using Parseval's identity, we finally obtain 
\begin{align*}
\E[M(u^{n+1})]=\sum_{j=0}^\infty \E[|c_j^{n+1}|^2]&\leq \sum_{j=0}^\infty \E[|c_j^0|^2]+\frac{t_{n+1}}{1+\frac{k^2\lambda_1}{4}}\sum_{l=1}^\infty|\alpha_l|^2\sum_{j=0}^\infty |(f_l,e_j)|^2\\
&=\sum_{j=0}^\infty \E[|c_j^0|^2]+\frac{t_{n+1}}{1+\frac{k^2\lambda_1}{4}}\sum_{l=1}^\infty|\alpha_l|^2\norm{f_i}^2\\
&=\sum_{j=0}^\infty \E[|c_j^0|^2]+\frac{t_{n+1}}{1+\frac{k^2\lambda_1}{4}}\sum_{l=1}^\infty|\alpha_l|^2\\
&=\E[M(u^{0})]+\frac{t_{n+1}}{1+\frac{k^2\lambda_1}{4}}\Tr(Q).
\end{align*}
\end{enumerate}
\end{proof}

%%%%%%%%%%%%%%%%%%%%%%%%%%%%%%%%%%%%%%%%%%%%%%%%%%%%%%%%%%%%%%%%%%%%%%%%%%%%%%%%%%%%%%%%%%%%%%%%%%%%%%%%%%%%%
\subsection{A trace formula for the energy}\label{sect:tr}
Again, under appropriate boundary conditions, for example periodic boundary conditions 
or homogeneous Dirichlet boundary conditions, 
it is well known that the energy 
$$
H(u(t)):=\frac{1}{2}\int|\nabla u|^2\,\dd x
$$
of the deterministic linear Schr\"{o}dinger equation $\ii \frac{\partial u}{\partial t}-\Delta u=0$ 
remains constant along the exact solution, see for example~\cite{cazenaveAnIntro}. 
When additive noise is introduced into the problem, we get a linear drift in the expected value of the total energy 
as stated in the following result. 
\begin{proposition}\label{prop:traceL}
Assume that the initial data is such that $\E[H(u_0)]$ is finite, and that $\norm{\nabla Q^{1/2}}_{\mathcal{L}_2^0}$ is bounded. 
Then the exact solution of the linear Schr\"{o}dinger equation with additive noise \eqref{linSch} 
satisfies the trace formula for the expected energy
\begin{align*}
\E\bigl[H(u(t))\bigr]=\E\bigl[H(u_0)\bigr]+\frac{t}{2}\Tr(\nabla Q\nabla)\quad\text{for all time}\quad t. 
\end{align*}
\end{proposition}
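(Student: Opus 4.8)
The plan is to mirror the proof of Proposition~\ref{prop:MassL}, applying the gradient to the mild solution and expanding the resulting squared $L^2$-norm. Since $S(t)=\e^{-\ii t\Delta}$ is a function of the Laplacian, the operator $\nabla$ commutes with $S(t)$ under the assumed boundary conditions, so applying $\nabla$ to the mild solution of \eqref{linSch} gives
$$
\nabla u(t)=S(t)\nabla u_0-\ii\int_0^t S(t-r)\nabla\,\dd W(r).
$$
Writing $2H(u(t))=\norm{\nabla u(t)}_{L^2}^2$ and expanding the inner product then produces a deterministic term $\norm{S(t)\nabla u_0}_{L^2}^2$, two cross terms, and a stochastic term $\norm{\int_0^t S(t-r)\nabla\,\dd W(r)}_{L^2}^2$, exactly as in the mass computation but with $\nabla u$ in place of $u$.

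Next I would treat these contributions one by one. The two cross terms vanish in expectation because the stochastic integral has mean zero. For the deterministic term, Lemma~\ref{lemma1} with $\sigma=0$ applied componentwise gives the isometry $\norm{S(t)\nabla u_0}_{L^2}=\norm{\nabla u_0}_{L^2}$, whence $\E[\norm{S(t)\nabla u_0}_{L^2}^2]=2\E[H(u_0)]$. For the stochastic term, the Ito isometry yields
$$
\E\Bigl[\norm{\int_0^t S(t-r)\nabla\,\dd W(r)}_{L^2}^2\Bigr]=\int_0^t\norm{S(t-r)\nabla Q^{1/2}}_{\mathcal{L}_2^0}^2\,\dr.
$$

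The crux is then to evaluate this integral. Applying the $L^2$-isometry of Lemma~\ref{lemma1} termwise inside the Hilbert--Schmidt sum gives $\norm{S(t-r)\nabla Q^{1/2}}_{\mathcal{L}_2^0}=\norm{\nabla Q^{1/2}}_{\mathcal{L}_2^0}$, independent of $r$, so the integral equals $t\norm{\nabla Q^{1/2}}_{\mathcal{L}_2^0}^2$. I would then identify this Hilbert--Schmidt norm with the trace expression: expanding in an orthonormal eigenbasis $\{f_l\}$ of $Q$ with eigenvalues $\alpha_l$ and integrating by parts gives $\norm{\nabla Q^{1/2}}_{\mathcal{L}_2^0}^2=\sum_l\alpha_l\norm{\nabla f_l}_{L^2}^2=\Tr(\nabla Q\nabla)$. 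Collecting the deterministic and stochastic pieces and dividing by two gives $\E[H(u(t))]=\E[H(u_0)]+\frac{t}{2}\Tr(\nabla Q\nabla)$.

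The step I expect to require the most care is the commutation of $\nabla$ with $S(t)$ together with the termwise use of the isometry inside the Hilbert--Schmidt norm. Both rely on the boundary conditions and on the hypothesis that $\norm{\nabla Q^{1/2}}_{\mathcal{L}_2^0}$ is bounded, which is precisely what legitimises the integration by parts and the identification with $\Tr(\nabla Q\nabla)$; the remaining manipulations are routine, being the energy analogue of the already established trace formula for the mass.
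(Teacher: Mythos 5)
Your proposal is correct and follows essentially the same route as the paper's proof: expand $\norm{\nabla u(t)}_{L^2}^2$ via the mild solution, kill the cross terms by the mean-zero property of the Ito integral, use the $L^2$-isometry of $S(t)$ (equivalently, commuting $\nabla$ past $S(t)$) on both the deterministic and stochastic pieces, and apply the Ito isometry to identify the remaining term with $t\Tr(\nabla Q\nabla)$. The only difference is presentational: the paper asserts $\int_0^t\E[\norm{\nabla(S(t-r)Q^{1/2})}_{\mathcal{L}_2^0}^2]\,\dr=t\Tr(\nabla Q\nabla)$ directly, whereas you spell out the eigenbasis computation.
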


\begin{proof}
Using Lemma~\ref{lemma1} and the fact that the Ito integral is normally distributed with mean $0$, we have
\begin{align*}
\E[\norm{\nabla u(t)}_{L^2}^2]&= \E[\norm{\nabla(S(t)u_0)-\ii\int_0^t \nabla S(t-r)\,\mathrm{d}W(r)}_{L^2}^2]\\
&=\E\left[\norm{\nabla(S(t)u_0)}_{L^2}^2+\norm{\int_0^t \nabla S(t-r)\,\mathrm{d}W(r)}_{L^2}^2\right.\\
&\quad-\left.\left(\nabla(S(t)u_0),\ii\int_0^t\nabla S(t-r)\mathrm{d}W(r)\right)-
\left(\ii\int_0^t\nabla S(t-r)\mathrm{d}W(r),\nabla(S(t)u_0)\right)\right]\\
&=\E[\norm{\nabla u_0}_{L^2}^2]+\int_0^t\E[\norm{\nabla(S(t-r)Q^{1/2})}^2_{\mathcal{L}_2^0}]\,\dr\\
&=\E[\norm{\nabla u_0}_{L^2}^2]+t\Tr(\nabla Q\nabla).
\end{align*}
\end{proof}

We now show that our exponential integrator satisfies the very same energy trace formula as 
the exact solution to the linear stochastic Schr\"odinger equation \eqref{linSch}.
\begin{proposition}\label{traceLEXP}
With the same assumptions as in Proposition \ref{prop:traceL}, the exponential integrator 
\eqref{LinearScheme} satisfies the following trace formula
\begin{align*}
\E\bigl[H(u^n)\bigr]&=\E\bigl[H(u^{n-1})\bigr]+\frac{k}{2}\Tr(\nabla Q\nabla)\\
&=\E\bigl[H(u_0)\bigr]+\frac{t_n}{2}\Tr(\nabla Q\nabla)\quad\text{for all}\quad t_n=nk. 
\end{align*}
\end{proposition}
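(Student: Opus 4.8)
The plan is to mimic the one-step argument used for the mass in Proposition~\ref{prop:MassLNum}, now carrying the gradient through the computation exactly as in the exact-solution energy formula of Proposition~\ref{prop:traceL}. Since it suffices to establish the single-step recursion $\E[H(u^n)]=\E[H(u^{n-1})]+\frac{k}{2}\Tr(\nabla Q\nabla)$ and then close the argument by induction on $n$, I would focus entirely on one step of the scheme.

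First I would insert the definition $u^n=S(k)u^{n-1}-\ii S(k)\Delta W^{n-1}$ into $2H(u^n)=\norm{\nabla u^n}_{L^2}^2$ and expand the squared norm into the deterministic term $\norm{\nabla S(k)u^{n-1}}_{L^2}^2$, the two mixed inner-product terms, and the noise term $\norm{\nabla S(k)\Delta W^{n-1}}_{L^2}^2$. The two mixed terms vanish in expectation: the increment $\Delta W^{n-1}$ is independent of the $\mathcal{F}_{t_{n-1}}$-measurable quantity $u^{n-1}$ and has mean zero, so conditioning on $\mathcal{F}_{t_{n-1}}$ removes both cross terms, exactly as in Proposition~\ref{prop:traceL}.

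For the deterministic term I would use that $S(k)=\e^{-\ii k\Delta}$ commutes with $\nabla$, whence $\nabla S(k)u^{n-1}=S(k)\nabla u^{n-1}$, and then invoke the isometry property of Lemma~\ref{lemma1} (with $\sigma=0$) to get $\norm{\nabla S(k)u^{n-1}}_{L^2}=\norm{\nabla u^{n-1}}_{L^2}$, producing the term $\E[\norm{\nabla u^{n-1}}_{L^2}^2]=2\E[H(u^{n-1})]$. For the noise term I would write $S(k)\Delta W^{n-1}=\int_{t_{n-1}}^{t_n}S(k)\,\dd W(r)$ and apply the Ito isometry together with $\nabla S(k)Q^{1/2}=S(k)\nabla Q^{1/2}$ and the same $L^2$-isometry, obtaining
\begin{align*}
\E\bigl[\norm{\nabla S(k)\Delta W^{n-1}}_{L^2}^2\bigr]
=\int_{t_{n-1}}^{t_n}\norm{\nabla S(k)Q^{1/2}}_{\mathcal{L}_2^0}^2\,\dr
=k\norm{\nabla Q^{1/2}}_{\mathcal{L}_2^0}^2=k\Tr(\nabla Q\nabla).
\end{align*}
Dividing by two and summing the recursion over $n$ then gives the claimed formula.

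I do not expect a genuine obstacle here, since every ingredient is already available: the commutation of $\nabla$ with the free propagator and the $L^2$-isometry of $S(k)$ are precisely what made Proposition~\ref{prop:traceL} work, and the only structural change is that the numerical integrand $S(k)$ is constant on each subinterval rather than $S(t-r)$. The one point to state carefully is that this constancy is harmless: because $S(k)$ is an isometry that commutes with $\nabla$, the Hilbert--Schmidt norm $\norm{\nabla S(k)Q^{1/2}}_{\mathcal{L}_2^0}$ equals $\norm{\nabla Q^{1/2}}_{\mathcal{L}_2^0}$ independently of the step size, which is exactly why the exponential integrator reproduces the energy drift $\frac{1}{2}\Tr(\nabla Q\nabla)$ with no discretisation error.
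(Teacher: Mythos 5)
Your proposal is correct and follows essentially the same route as the paper's own proof: expand $\norm{\nabla u^n}_{L^2}^2$ using one step of the scheme, kill the cross terms by the mean-zero/independence argument, use the isometry of $S(k)$ (commuting with $\nabla$) for the deterministic part, and apply the Ito isometry to get the increment $k\Tr(\nabla Q\nabla)$, closing with a recursion. No gaps.
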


\begin{proof}
Similarly to the proof of the previous proposition, we have
\begin{align*}
\E\bigl[H(u^n)\bigr]&=\frac{1}{2}\E[\norm{\nabla u^n}_{L^2}^2]\\
&=\frac{1}{2}\E[\norm{\nabla(S(k)u^{n-1})-\ii\int_{t_{n-1}}^{t_n}\nabla S(k)\,\dd W(r)}_{L^2}^2]\\
&=\frac{1}{2}\E[\norm{\nabla u^{n-1}}_{L^2}^2]+\frac{1}{2}\int_{t_{n-1}}^{t_n} \E[\norm{\nabla(S(k)Q^{1/2})}_{\mathcal{L}_2^0}^2]\,\dd r\\
&=\E\bigl[H(u^{n-1})\bigr]+\frac{k}{2}\Tr(\nabla Q\nabla).
\end{align*}
\end{proof}

\subsection{A formula for the momentum}\label{sect:momentum}
In the deterministic case, one has an additional conserved quantity, the momentum
$$
p(u):=\ii\int (u\nabla\bar u-\bar u\nabla u)\,\dd x.
$$
The next result investigate the behaviour of this quantity in the stochastic case. 
\begin{proposition}\label{prop:momentum}
Assume that $\E[p(u_0)]<\infty$ and $Q^{1/2}\in\mathcal{L}_2^1$. 
Then the exact solution of the linear Schr\"{o}dinger equation with additive noise \eqref{linSch} 
exhibits the following formula for the expected momentum
$$
\E[p(u(t))]=\E[p(u_0)]-2t\mathrm{Im}\left< Q^{1/2},\nabla Q^{1/2}\right>_{\mathcal{L}_2^0}  \quad\text{for all time}\quad t.
$$
Here, we denote by $\left<\cdot,\cdot\right>$ the usual $L^2$ inner product
\begin{equation*}
\left< u,v\right>:=\int_{\mathbb{R}^d}u\bar{v}\,\dd x,
\end{equation*}
for $u,v\in L^2$. And further denote the Hilbert-Schmidt inner product from $L^2$ to $L^2$ 
with the above inner product by $\left< \cdot,\cdot\right>_{\mathcal{L}_2^0}$. 
\end{proposition}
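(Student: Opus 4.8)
\section*{Proof proposal}

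The plan is to follow the same scheme as in the proofs of Propositions~\ref{prop:MassL} and~\ref{prop:traceL}: substitute the mild solution, expand, drop the mixed terms because the stochastic convolution has mean zero, and evaluate the two surviving contributions. I would first rewrite the momentum as a single sesquilinear expression. Since $\bar u\nabla u=\overline{u\nabla\bar u}$, one has $p(u)=-2\,\mathrm{Im}\langle u,\nabla u\rangle$, understood componentwise, with $\langle u,v\rangle=\int_{\R^d}u\bar v\,\dd x$. Writing $u(t)=a+b$ with $a=S(t)u_0$ and $b=-\ii\int_0^t S(t-r)\,\dd W(r)$ and expanding $\langle a+b,\nabla(a+b)\rangle$ produces the two pure terms $\langle a,\nabla a\rangle$, $\langle b,\nabla b\rangle$ and the two mixed terms $\langle a,\nabla b\rangle$, $\langle b,\nabla a\rangle$. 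Unlike the mass and the energy, the momentum is genuinely sesquilinear rather than a squared norm, so the bookkeeping of conjugates and of the imaginary part has to be done with some care.

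Next I would show the mixed terms vanish in expectation. The factor $a=S(t)u_0$ and its gradient are $\mathcal{F}_0$-measurable, whereas $b=-\ii Z$ and $\nabla b=-\ii\int_0^t\nabla S(t-r)\,\dd W(r)$ are stochastic integrals over $[0,t]$ with conditional mean zero given $\mathcal{F}_0$. Conditioning on $\mathcal{F}_0$ therefore kills $\E[\langle a,\nabla b\rangle]$ and $\E[\langle b,\nabla a\rangle]$; the hypothesis $Q^{1/2}\in\mathcal{L}_2^1$ ensures that $\nabla Z$ is a well-defined $L^2$-valued integral (as $\nabla Q^{1/2}\in\mathcal{L}_2^0$) and provides the integrability needed to interchange expectation, the $x$-integral, and $\nabla$. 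What remains is $\E[p(u(t))]=\E[p(a)]+\E[p(b)]$.

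For the deterministic term I would use that $\nabla$ commutes with $S(t)$ and that, by the complex polarisation identity, the $L^2$-isometry of Lemma~\ref{lemma1} upgrades to $\langle S(t)u,S(t)v\rangle=\langle u,v\rangle$. Then $\langle S(t)u_0,\nabla S(t)u_0\rangle=\langle S(t)u_0,S(t)\nabla u_0\rangle=\langle u_0,\nabla u_0\rangle$, so $p(S(t)u_0)=p(u_0)$ and $\E[p(a)]=\E[p(u_0)]$, which is just conservation of momentum under the free flow. For the stochastic term, $b=-\ii Z$ gives $\langle b,\nabla b\rangle=\langle Z,\nabla Z\rangle$, and the Ito isometry (as used for the mass in Proposition~\ref{prop:MassL}) yields
\begin{align*}
\E[\langle Z,\nabla Z\rangle]=\int_0^t\langle S(t-r)Q^{1/2},\nabla S(t-r)Q^{1/2}\rangle_{\mathcal{L}_2^0}\,\dr.
\end{align*}
The decisive point is that the integrand is independent of $r$: applying $\nabla S=S\nabla$ and unitarity of $S$ termwise, $\langle S(t-r)Q^{1/2}e_k,\nabla S(t-r)Q^{1/2}e_k\rangle=\langle Q^{1/2}e_k,\nabla Q^{1/2}e_k\rangle$, so summing over $k$ collapses the integrand to the constant $\langle Q^{1/2},\nabla Q^{1/2}\rangle_{\mathcal{L}_2^0}$ and the $r$-integral contributes a factor $t$. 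Taking $-2\,\mathrm{Im}$ then gives $\E[p(b)]=-2t\,\mathrm{Im}\langle Q^{1/2},\nabla Q^{1/2}\rangle_{\mathcal{L}_2^0}$, and adding the two pieces yields the claim.

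I expect the main difficulty to be purely one of careful accounting in the complex, sesquilinear, vector-valued setting: making sure the conjugations are placed so that the mixed terms really cancel, that $\langle b,\nabla b\rangle$ reduces to $\langle Z,\nabla Z\rangle$, and that the noise contribution appears as an imaginary part, together with a clean justification of the gradient-weighted Ito isometry for a complex $Q$-Wiener process. The only genuinely new analytic ingredient beyond the mass and energy arguments is the $r$-independence of $\langle S(s)Q^{1/2},\nabla S(s)Q^{1/2}\rangle_{\mathcal{L}_2^0}$, which plays here the role that the norm identities of Lemma~\ref{lemma1} played there.
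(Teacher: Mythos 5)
Your proposal is correct and follows essentially the same route as the paper: expand the mild solution inside the sesquilinear form $\langle u,\nabla u\rangle$, discard the mixed terms by the zero mean of the stochastic convolution, use unitarity of $S$ (commuting with $\nabla$) to conserve the deterministic part, and apply the Ito isometry together with the $r$-independence of $\langle S(t-r)Q^{1/2},\nabla S(t-r)Q^{1/2}\rangle_{\mathcal{L}_2^0}$ to extract the linear drift. The only cosmetic difference is that you fold the two conjugate terms into $-2\,\mathrm{Im}\langle u,\nabla u\rangle$ from the outset, whereas the paper computes $\langle u,\nabla u\rangle$ and $\langle\nabla u,u\rangle$ separately and combines them at the end.
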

\begin{proof}
The momentum can be written as
$$p(u(t))=\ii\left< u(t),\nabla u(t)\right>-\ii\left< \nabla u(t), u(t)\right>.$$
Using the mild solution, we have
\begin{align*}
\left< u(t),\nabla u(t)\right>&=\left< S(t)u_0-\ii\int_0^t S(t-r)\,\dd W(r),\nabla S(t)u_0-\ii\int_0^t\nabla S(t-r)\,\dd W(r)\right>\\
&=\left< S(t)u_0,\nabla S(t)u_0\right>+\left< S(t)u_0,-\ii\int_0^t S(t-r)\nabla\dd W(r)\right>\\
&\quad+\left< -\ii\int_0^t S(t-r)\dd W(r),S(t)\nabla u_0\right>\\
&\quad+\left< -\ii\int_0^t S(t-r)\dd W(r),-\ii\int_0^t S(t-r)\nabla\dd W(r)\right>.
\end{align*}
Taking the expectation, the terms containing one integral is zero. Using also the Ito isometry we obtain
\begin{align*}
\E[\left< u(t),\nabla u(t)\right>]&=\E[\left< S(t)u_0,\nabla S(t)u_0\right>]+\E\left[\left< \int_0^t S(t-r)\dd W(r),\int_0^t S(t-r)\nabla\dd W(r)\right>\right]\\
&=\E[\left< u_0,\nabla u_0\right>]+\E\left[\int_0^t\left< S(t-r)Q^{1/2},S(t-r)\nabla Q^{1/2}\right>_{\mathcal{L}_2^0}\,\dd r\right]\\
&=\E[\left< u_0,\nabla u_0\right>]+\E\left[\int_0^t\left< Q^{1/2},\nabla Q^{1/2}\right>_{\mathcal{L}_2^0}\,\dd r\right]\\
&=\E[\left< u_0,\nabla u_0\right>]+t\left< Q^{1/2},\nabla Q^{1/2}\right>_{\mathcal{L}_2^0}.
\end{align*}
Similarly, we have 
\begin{align*}
\E[\left< \nabla u(t), u(t)\right>]&=\E[\left< \nabla u_0, u_0\right>]+t\left< \nabla Q^{1/2},Q^{1/2}\right>_{\mathcal{L}_2^0}.
\end{align*}
For the expected momentum we finally obtain
\begin{align*}
\E[p(u(t))]&=\E[\ii\left< u(t),\nabla u(t)\right>-\ii\left< \nabla u(t), u(t)\right>]\\
&=\ii\E[\left< u_0,\nabla u_0\right>-\left< \nabla u_0, u_0\right>]+\ii t\left(\left< Q^{1/2},\nabla Q^{1/2}\right>_{\mathcal{L}_2^0}-\left< \nabla Q^{1/2},Q^{1/2}\right>_{\mathcal{L}_2^0}\right)\\
&=\E[p(u_0)]+\ii t\left(\left< Q^{1/2},\nabla Q^{1/2}\right>_{\mathcal{L}_2^0}-\overline{\left< Q^{1/2},\nabla Q^{1/2}\right>}_{\mathcal{L}_2^0}\right)\\
&=\E[p(u_0)]-2t\mathrm{Im}\left< Q^{1/2},\nabla Q^{1/2}\right>_{\mathcal{L}_2^0}.
\end{align*}
\end{proof}

As we see in the proposition below, the exponential method satisfies the same trace formula for the momentum.
\begin{proposition}
With the same assumptions as in Proposition \ref{prop:momentum}, the exponential integrator \eqref{LinearScheme} exhibits the following formula for the expected momentum.
$$\E[p(u^n)]=\E[p(u_0)]-2t_n\mathrm{Im}\left< Q^{1/2},\nabla Q^{1/2}\right>_{\mathcal{L}_2^0} \quad\text{for all}\quad t_n=nk. $$
\end{proposition}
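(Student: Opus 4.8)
The plan is to mirror the computation in the proof of Proposition~\ref{prop:momentum}, but to run it over a single step and then close by recursion, exactly as in Propositions~\ref{prop:MassLNum} and \ref{traceLEXP}. First I would write the momentum as $p(u^n)=\ii\left< u^n,\nabla u^n\right>-\ii\left< \nabla u^n,u^n\right>$ and insert the one-step form of the scheme, namely $u^n=S(k)u^{n-1}-\ii\int_{t_{n-1}}^{t_n}S(k)\,\dd W(r)$, which follows directly from \eqref{LinearScheme}. The goal is to establish a one-step drift identity for $\E[p(u^n)]$ in terms of $\E[p(u^{n-1})]$ and then iterate.

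Expanding $\left< u^n,\nabla u^n\right>$ produces four terms. Upon taking the expectation, the two mixed terms, each containing a single stochastic integral, vanish, since the Wiener increment over $[t_{n-1},t_n]$ has mean zero and is independent of the $\mathcal{F}_{t_{n-1}}$-measurable quantity $u^{n-1}$. For the purely deterministic term I would use that $S(k)=\e^{-\ii k\Delta}$ is a unitary Fourier multiplier that commutes with $\nabla$, so that $\left< S(k)u^{n-1},\nabla S(k)u^{n-1}\right>=\left< S(k)u^{n-1},S(k)\nabla u^{n-1}\right>=\left< u^{n-1},\nabla u^{n-1}\right>$, which is exactly the reduction already used via Lemma~\ref{lemma1} in Proposition~\ref{prop:momentum}. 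For the remaining term, the prefactors $-\ii$ and $\overline{-\ii}$ combine to $1$, and Ito's isometry turns it into $\int_{t_{n-1}}^{t_n}\left< S(k)Q^{1/2},\nabla S(k)Q^{1/2}\right>_{\mathcal{L}_2^0}\,\dr$; the same unitarity-and-commutation argument removes $S(k)$ and leaves $k\left< Q^{1/2},\nabla Q^{1/2}\right>_{\mathcal{L}_2^0}$.

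Collecting these contributions gives $\E[\left< u^n,\nabla u^n\right>]=\E[\left< u^{n-1},\nabla u^{n-1}\right>]+k\left< Q^{1/2},\nabla Q^{1/2}\right>_{\mathcal{L}_2^0}$, and the conjugate computation yields the analogous identity for $\left< \nabla u^n,u^n\right>$ with the increment $k\left< \nabla Q^{1/2},Q^{1/2}\right>_{\mathcal{L}_2^0}$. Since the latter Hilbert--Schmidt inner product is the complex conjugate of the former, forming $\ii\left< u^n,\nabla u^n\right>-\ii\left< \nabla u^n,u^n\right>$ converts the increment into $-2k\,\mathrm{Im}\left< Q^{1/2},\nabla Q^{1/2}\right>_{\mathcal{L}_2^0}$, giving the one-step recursion
$$
\E[p(u^n)]=\E[p(u^{n-1})]-2k\,\mathrm{Im}\left< Q^{1/2},\nabla Q^{1/2}\right>_{\mathcal{L}_2^0},
$$
after which a recursion over $n$, together with $t_n=nk$, delivers the stated formula. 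I do not anticipate a genuine obstacle here; the one point needing care, and the reason the numerical momentum drift coincides exactly with the exact one, is the bookkeeping of the complex inner product together with the facts that $S(k)$ is unitary and commutes with $\nabla$, so that neither the deterministic transport nor the stochastic forcing is distorted by the exponential propagator over a single step.
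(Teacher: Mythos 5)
Your proposal is correct and follows essentially the same route as the paper: the paper's proof likewise writes $u^n=S(k)u^{n-1}-\ii\int_{t_{n-1}}^{t_n}S(k)\,\dd W(r)$, reuses the computation from the exact-solution case (vanishing cross terms, Ito isometry, unitarity of $S(k)$ and its commutation with $\nabla$) to obtain the one-step drift $-2k\,\mathrm{Im}\left< Q^{1/2},\nabla Q^{1/2}\right>_{\mathcal{L}_2^0}$, and concludes by recursion.
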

\begin{proof}
Writing
$$u^n = S(k)u^{n-1}-\ii\int_{t_{n-1}}^{t_n} S(k)\dd W(r),$$
we can use the same techniques as in the proof for the exact solution. We obtain
\begin{align*}
\E[p(u^n)]&=\E[p(u^{n-1})]-2k\mathrm{Im}\left< Q^{1/2},\nabla Q^{1/2}\right>_{\mathcal{L}_2^0}\\
&=\E[p(u_0)]-2t_n \mathrm{Im}\left< Q^{1/2},\nabla Q^{1/2}\right>_{\mathcal{L}_2^0}.
\end{align*}
\end{proof}

\subsection{Numerical experiments for the linear stochastic Schr\"odinger equation}\label{ssect:numexpL}
This subsection illustrates the above properties (error estimates and trace formulas) 
of the stochastic exponential integrator \eqref{LinearScheme} when applied to the linear 
stochastic Schr\"odinger equation \eqref{linSch}. 

Let us first consider the error in the time integration of the linear stochastic Schr\"odinger equation. 
For this, we consider problem \eqref{linSch} on the interval $[0,2\pi]$ with periodic boundary conditions. 
The initial value is taken to be $u_0=0$ and the eigenvalues of the covariance operator $Q$ are 
given by $\lambda_n=1/(1+n^8)$ for $n\in\Z$. Such a regular noise is needed for the midpoint rule to be convergent, 
see \cite[Proposition~3.1]{Weakandstrongorderofconv} and the discussion below. The spatial discretisation is done by a 
pseudospectral method (with $M=2^8$ Fourier modes) using the following representation of the noise 
$$
W(x,t)=\sum_{n\in\Z}\lambda_n^{1/2}\beta_n(t)e_n(x),
$$
where $\{\beta_n(t)\}_{n\in\Z}$ are i.i.d Brownian motion and 
$\{e_n(x)\}_{n\in\Z}=\{\frac{1}{\sqrt{2\pi}}\e^{\ii nx}\}_{n\in\Z}$ is an orthonormal basis of $L^2(0,2\pi)$. 
The resulting system of stochastic differential equations is then integrated in time by 
the stochastic exponential integrator \eqref{LinearScheme} (SEXP), 
the Crank-Nicolson scheme from \cite[Section 3]{Weakandstrongorderofconv}, which reduces to 
the stochastic implicit midpoint rule (MP), and the classical backward Euler-Maruyama methods (BEM). 
Note that the classical explicit Euler-Maruyama would 
require an unreasonable small time step and is therefore omitted in our computational experiments. 
Observe also that MP and BEM are semi-implicit methods whereas SEXP is explicit. 
The rates of mean-square convergence (measured in the $L^2$-norm at the end of the interval 
of integration $[0,0.5]$) of these integrators are presented in Figure~\ref{fig:msLinear}. 
The expected rate of convergence $\bigo{k^2}$ of the stochastic exponential 
integrator, as stated in Theorem~\ref{maxlinear}, can be confirmed. 
Here, the exact solution is approximated by the stochastic midpoint rule with a very small 
time step $k_{\text{exact}}=2^{-10}$ and $M_{\text s}=750000$ samples 
are used for the approximation of the expected values. Note that here and in 
all our numerical experiments, enough samples are taken 
in order for the Monte-Carlo errors to be negligible.

\begin{figure}
\centering
\includegraphics*[height=6.5cm,keepaspectratio]{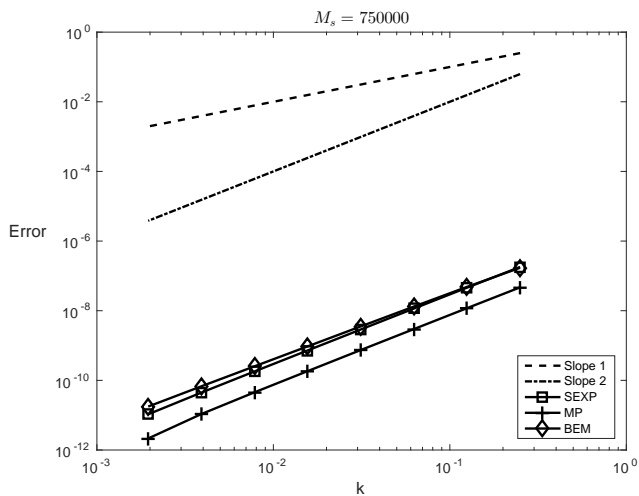}
\caption{Linear stochastic Schr\"odinger equation: 
Mean-square errors for the stochastic exponential integrator (SEXP), 
the midpoint rule (MP), and backward Euler-Maruyama (BEM). 
The dotted, resp. dash-dotted, lines have slopes $1$ and $2$.\label{fig:msLinear}}
\end{figure}

We now turn to the trace formulas. Figure~\ref{fig:traceLin} illustrates the energy trace formula 
from Proposition~\ref{prop:traceL} and compare the results obtained with our 
stochastic exponential method, with the backward Euler-Maruyama method, and with 
the midpoint rule. $M=2^7$ Fourier modes are used for the spatial discretisation; 
$M_{\text s}=10000$ samples are used for the approximatation 
of the expected values; the time interval is $[0,2500]$, 
and all schemes use a step size $k=0.1$. The other parameters are the same as in the 
above numerical experiments. The numerical linear drift in the expected 
value of the energy of the stochastic exponential integrator \eqref{LinearScheme}, 
as stated in Proposition~\ref{traceLEXP}, is observed in this figure. One also 
observes an excellent behaviour of the midpoint rule in reproducing the linear growth of the 
expected energy correctly. This is in contrast with the wrong behaviour 
of the backward Euler-Maruyama method. Note that this fact was already observed 
for the trace formula for the linear stochastic wave equation \cite{cls12s}. 

\begin{figure}
\centering
\includegraphics*[height=6.5cm,keepaspectratio]{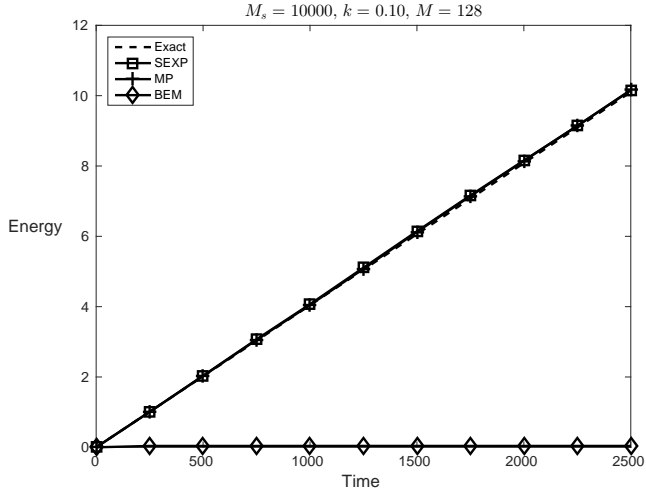}
\caption{Linear stochastic Schr\"odinger equation: 
Energy trace formulas for the numerical solutions given by 
the stochastic exponential method (SEXP), the midpoint rule (MP) and 
the backward Euler-Maruyama scheme (BEM). 
Time interval and time step: $[0,2500]$, resp. $k=0.1$.
\label{fig:traceLin}}
\end{figure}

Using the same parameters as in the previous numerical experiments,  
a similar behaviour of the numerical methods is observed 
for the mass trace formula given in Proposition~\ref{prop:MassL}. These results are not displayed. 
However, when considering a less regular noise, 
for example when the eigenvalues of $Q$ are given by $\lambda_n=1/(1+n^2)$, which is of trace-class,  
one observes that the midpoint rule does not perform as well as the stochastic exponential method, 
see Figure~\ref{fig:masstraceLinear}. This further illustrates the theoretical 
results obtained in Proposition~\ref{prop:MassNum}.

\begin{figure}
\centering
\includegraphics*[height=6.5cm,keepaspectratio]{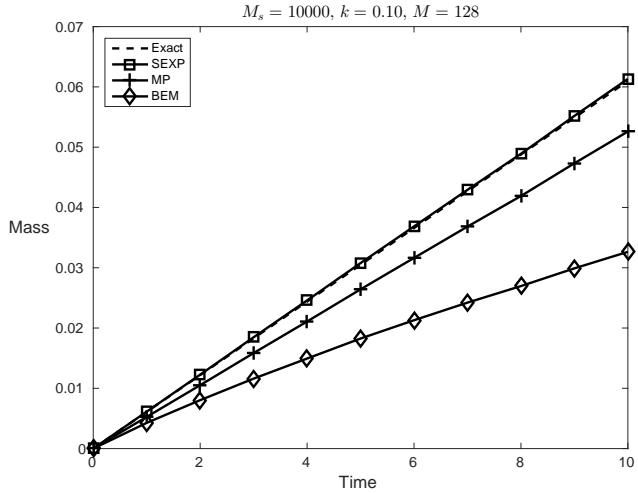}
\caption{Linear stochastic Schr\"odinger equation: 
Mass trace formulas for the numerical solutions given by 
the stochastic exponential method (SEXP), the midpoint rule (MP) and 
the backward Euler-Maruyama scheme (BEM). 
Time interval and time step: $[0,10]$, resp. $k=0.1$. 
\label{fig:masstraceLinear}}
\end{figure}

% An illustration of the fact that the implicit midpoint rule from \cite{Weakandstrongorderofconv} 
% underestimate the linear drift in the expected value of the mass is provided in Figure~\ref{fig:masstraceLinear2}. 
% In this figure, we use the MP and SEXP with a (very) large time step $k=??$ on the very 
% long time interval $[0,??]$ in order to corroborate 
% \coco{to do and give ref to the results}. 
% 
% \begin{figure}
% \begin{center}
% % \includegraphics*[height=6.5cm,keepaspectratio]{Pics/massLinearPS2.eps}
% \caption{Linear stochastic Schr\"odinger equation: 
% Mass trace formulas for the numerical solutions given by 
% the stochastic exponential method (SEXP) and the midpoint rule (MP). 
% The MP underestimate the expected linear drift.  
% Time interval and time step: $[0,??]$, resp. $k=??$. 
% }
% \label{fig:masstraceLinear2}
% \end{center}
% \end{figure}

%%%%%%%%%%%%%%%%%%%%%%%%%%%%%%%%%%%%%%%%%%%%%%%%%%%%%%%%%%%%%%%%%%%%%%%%%%%%%%%%%%%%%%%%%%%%%%%%%%%%%%%%%%%%%%%%%%%%

\section{Linear stochastic Schr\"odinger equations with a multiplicative potential}\label{sec-potadd}
We now consider the stochastic Schr\"{o}dinger equation
\begin{equation}
\begin{aligned}
&\ii \mathrm{d}u  = \Delta u\, \mathrm{d}t + V(x)u\,\mathrm{d}t+\,\mathrm{d}W &&\quad \mathrm{in}\: \ 
\mathbb{R}^d\times(0,\infty), \\
&u(\cdot,0) = u_0, &&\quad \mathrm{in}\: \ \mathbb{R}^d,
\end{aligned}
\label{sseMP}
\end{equation}
where $V(x)$ is a real-valued potential. The initial data $u_0$ is an $\mathcal{F}_0$-measurable complex-valued random variable, and the $Q$-Wiener process $\{W(t)\}_{t\geq 0}$ is again complex-valued and square integrable with respect to the filtration. Further assumptions on $d$, $u_0$, $V(x)$ and $Q$ will be made when the results of this section are presented. In this section we aim to prove convergence rate of our exponential integrator, trace formulas for the mass and energy, 
and finally present numerical experiments that confirm these theoretical findings.

We use the same semigroup approach as in Section~\ref{sec-linadd} and write the mild equation 
of \eqref{sseMP} as
\begin{align}\label{mildpotadd}
u(t)=S(t)u_0-\ii\int_0^t S(t-r)V(x)u(r)\,\dd r-\ii\int_0^t S(t-r)\,\dd W(r),
\end{align}
where we recall that $S(t)=\e^{-\ii t\Delta}$.
We have the following result regarding existence, uniqueness and boundedness of solutions of \eqref{sseMP}.
\begin{proposition}\label{potVbdd}
Assume that $\E[\gnorm{u_0}^{2p}]<\infty$ for some $p\in\mathbb{N}$, and $Q^{1/2}\in\mathcal{L}_2^{\sigma}$. Assume also that
\begin{enumerate}[label=(\roman*)]
\item $\sigma>\frac{d}{2}$ and $V(x)\in H^\sigma$, or
\item $\sigma=0$ and $d=1,2,3$ and $V(x)\in H^2$, or
\item $\sigma=1$ and $d=2,3$ and $V(x)\in H^3$.
\end{enumerate}
Then \eqref{sseMP} has a unique solution on $[0,T]$, for $T\in(0,\infty)$, which is given by \eqref{mildpotadd} and satisfies
$$
\E[\sup_{0\leq t\leq T}\gnorm{u(t)}^{2p}]\leq C.
$$
\end{proposition}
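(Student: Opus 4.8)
The plan is to obtain the solution as the unique fixed point of the integral map defined by \eqref{mildpotadd} and then to derive the moment bound by a Grönwall argument. Concretely, I would work in the Banach space $\mathcal{B}_T$ of $H^\sigma$-valued, predictable processes $u$ with continuous paths and finite norm $\tnorm{u}^{2p} := \E[\sup_{0\le t\le T}\gnorm{u(t)}^{2p}]$, and define the map
$$
(\mathcal{T}u)(t) = S(t)u_0 - \ii\int_0^t S(t-r)V(x)u(r)\,\dr - \ii\int_0^t S(t-r)\,\dd W(r).
$$
The three structural facts I would rely on are: the isometry property $\gnorm{S(t)w}=\gnorm{w}$ from Lemma~\ref{lemma1}; the Hilbert--Schmidt estimate \eqref{ineq1}; and Burkholder's inequality \cite[Lemma~7.2]{DaPrato}, exactly as used in the proof of Theorem~\ref{maxlinear}, to control the stochastic convolution by $\E[\sup_{t}\gnorm{\int_0^t S(t-r)\,\dd W(r)}^{2p}] \le C\,T^p\norm{Q^{1/2}}_{\mathcal{L}_2^\sigma}^{2p}$.

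The one genuinely problem-dependent ingredient -- and the step I expect to be the main obstacle -- is showing that the multiplication operator $M_V\colon w\mapsto V(x)w$ is bounded on $H^\sigma$, since this is precisely where the three sets of hypotheses on $(\sigma,d,V)$ enter. In case (i), $\sigma>d/2$ makes $H^\sigma$ a Banach algebra, so $\gnorm{Vw}\le C\gnorm{V}\gnorm{w}$ directly. In case (ii) one has $\sigma=0$, and since $2>d/2$ for $d\le 3$ the embedding $H^2\hookrightarrow L^\infty$ gives $V\in L^\infty$, whence $\norm{Vw}_{L^2}\le\norm{V}_{L^\infty}\norm{w}_{L^2}$. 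In case (iii), $\sigma=1$; here I would use the Leibniz rule $\nabla(Vw)=(\nabla V)w+V\nabla w$ together with $H^3\hookrightarrow W^{1,\infty}$ for $d\le 3$ (valid since $3-1>d/2$) to conclude $V,\nabla V\in L^\infty$ and hence $\gnorm{Vw}\le C\norm{V}_{W^{1,\infty}}\gnorm{w}$ with $\sigma=1$. In all three cases this yields a finite constant $C_V$ with $\gnorm{M_V w}\le C_V\gnorm{w}$.

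Granting the bound on $M_V$, I would first check that $\mathcal{T}$ maps $\mathcal{B}_T$ into itself: the isometry handles $S(t)u_0$, the bound $\gnorm{\int_0^t S(t-r)V(x)u(r)\,\dr}\le C_V\int_0^t\gnorm{u(r)}\,\dr\le C_V T\sup_{r}\gnorm{u(r)}$ (followed by taking the supremum and the $2p$-th moment) handles the drift, and the Burkholder estimate above handles the noise, using $Q^{1/2}\in\mathcal{L}_2^\sigma$ and $\E[\gnorm{u_0}^{2p}]<\infty$. For the contraction, the initial-value and noise terms cancel in $\mathcal{T}u-\mathcal{T}v$, leaving only the drift, so on a short interval $[0,T_0]$ one gets $\tnorm{\mathcal{T}u-\mathcal{T}v}\le C_V T_0\,\tnorm{u-v}$; choosing $T_0$ with $C_V T_0<1$ makes $\mathcal{T}$ a contraction and Banach's fixed point theorem yields a unique solution on $[0,T_0]$. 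Because $T_0$ depends only on $C_V$ and not on the initial datum, the argument can be restarted on $[T_0,2T_0]$, and so on, concatenating finitely many local solutions to cover all of $[0,T]$; this also gives uniqueness on $[0,T]$. Equivalently, one may use a weighted norm to contract in one shot.

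Finally, for the asserted moment bound I would return to \eqref{mildpotadd} and estimate, using the isometry, the $M_V$ bound, Hölder's inequality in time, and Burkholder's inequality, to reach an inequality of the form $\phi(t)\le C_1+C_2\int_0^t\phi(r)\,\dr$ for $\phi(t):=\E[\sup_{0\le s\le t}\gnorm{u(s)}^{2p}]$, where $C_1$ collects the contributions of $\E[\gnorm{u_0}^{2p}]$ and $T^p\norm{Q^{1/2}}_{\mathcal{L}_2^\sigma}^{2p}$ and $C_2$ is a multiple of $C_V^{2p}$. Grönwall's lemma then gives $\phi(T)\le C_1\e^{C_2 T}=:C$, which is the claimed estimate.
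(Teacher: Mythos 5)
Your proposal is correct and follows essentially the same route as the paper: the case analysis (i)--(iii) establishing boundedness of $w\mapsto V(x)w$ on $H^\sigma$ (algebra property, $H^2\hookrightarrow L^\infty$, Leibniz rule with $\nabla V\in H^2\subset L^\infty$) matches the paper's argument, existence and uniqueness are obtained by the same standard fixed-point argument the paper delegates to \cite[Theorem~7.4]{DaPrato}, and the moment bound is derived identically via the mild formulation, the isometry of $S(t)$, H\"older in time, Burkholder's inequality, and Gr\"onwall's lemma.
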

\begin{proof}
We first observe that with the different assumptions (i)-(iii), we have
\begin{enumerate}[label=(\roman*)]
\item $H^\sigma$ forms an algebra. In particular we have, for example, $\gnorm{V(x)u(t)}\leq C\gnorm{V(x)}\gnorm{u(t)}$.
\item In this case we have $H^2\subset L^\infty$, so whenever $V(x)\in H^2$ we have $\norm{V(x)u(t)}_{L^2}\leq\norm{V(x)}_{L^\infty}\norm{u(t)}_{L^2}\leq C\norm{u(t)}_{L^2}$.
\item We still have $H^2\subset L^\infty$ and, if $V(x)\in H^3$, then $\norm{V(x)u(r)}_1\leq C\norm{u(r)}_1.$ This can be seen by the following calculation:
\begin{align*}
\norm{V(x)u(r)}_1^2&=\norm{V(x)u(r)}_{L^2}^2+\norm{\nabla(V(x)u(r))}_{L^2}^2\\
&=\norm{V(x)u(r)}_{L^2}^2+\norm{\nabla V(x)\cdot u(r)+V(x)\nabla u(r)}_{L^2}^2\\
&\leq \norm{V(x)u(r)}_{L^2}^2+2(\norm{\nabla V(x)\cdot u(r)}_{L^2}^2+\norm{V(x)\nabla u(r)}_{L^2}^2)\\
&\leq 3\norm{V(x)}_{L^\infty}^2(\norm{u(r)}_{L^2}^2+\norm{\nabla u(r)}_{L^2}^2)+2\norm{\nabla V(x)}_{L^\infty}^2\norm{u(r)}_{L^2}^2\\
&\leq C\norm{u(r)}_1^2,
\end{align*}
since $\norm{\nabla V(x)}_{L^\infty}<\infty$ if $V(x)\in H^3$.
\end{enumerate}
Existence and uniqueness now follow from a standard fixed point argument 
(see \cite[Theorem $7.4$]{DaPrato}) using what was mentioned above about the different cases (i)-(iii).
To prove boundedness, we use the mild equation \eqref{mildpotadd}.
\begin{align*}
\E\left[\sup_{0\leq t\leq T}\gnorm{u(t)}^{2p}\right]&\leq C\left(\E\left[\sup_{0\leq t\leq T}\gnorm{S(t)u_0}^{2p}\right]+\E\left[\sup_{0\leq t\leq T}\gnorm{\int_0^t S(t-r)V(x)u(r)\,\dd r}^{2p}\right]\right.\\
&\quad+\left.\E\left[\sup_{0\leq t\leq T}\gnorm{\int_0^t S(t-r)\,\dd W(r)}^{2p}\right]\right).\\
\end{align*}
We now proceed by estimating the terms on the right-hand side. First we have, by Lemma \ref{lemma1},
$$\E\left[\sup_{0\leq t\leq T}\gnorm{S(t)u_0}^{2p}\right]=\E[\gnorm{u_0}^{2p}]\leq C.$$
For the second term we need to treat each case (i)-(iii) separately.
\begin{enumerate}[label=(\roman*)]
\item Using Lemma \ref{lemma1}, the fact that $H^\sigma$ is an algebra, and H\"{o}lder's inequality, we have
\begin{align*}
\E\left[\sup_{0\leq t\leq T}\gnorm{\int_0^tS(t-r)V(x)u(r)\,\dd r}^{2p}\right]&\leq\E\left[\sup_{0\leq t\leq T}\left(\int_0^t\gnorm{S(t-r)V(x)u(r)}\,\dd r\right)^{2p}\right]\\
&\leq \E\left[\sup_{0\leq t\leq T}\left(\int_0^t\gnorm{V(x)}\gnorm{u(r)}\,\dd r\right)^{2p}\right]\\
&\leq C\gnorm{V(x)}^{2p}\E\left[\sup_{0\leq t\leq T}\left(\int_0^t\gnorm{u(r)}\,\dd r\right)^{2p}\right]\\
&\leq C\E\left[\sup_{0\leq t\leq T}\left(\left(\int_0^t\gnorm{u(r)}^{2p}\,\dd r\right)^{\frac{1}{2p}}\left(\int_0^t 1^{\frac{2p}{2p-1}}\,\dr\right)^{\frac{2p-1}{2p}}\right)^{2p}\right]\\
&\leq CT^{2p-1}\E\left[\sup_{0\leq t\leq T}\int_0^t\gnorm{u(r)}^{2p}\,\dd r\right]\\
&\leq C\int_0^T\E\left[\sup_{0\leq s\leq r}\gnorm{u(s)}^{2p}\right]\,\dd r.
\end{align*}
In the last step we have used that $\gnorm{u(r)}^{2p}\leq\displaystyle\sup_{0\leq s\leq r}\gnorm{u(s)}^{2p}.$

\item If instead $\sigma=0$ and $d=1,2,3$ and thus $H^\sigma=L^2$ is not an algebra, then we have, using $H^2\subset L^\infty$,
\begin{align*}
\E\left[\sup_{0\leq t\leq T}\left(\int_0^t\norm{S(t-r)V(x)u(r)}_{L^2}\,\dd r\right)^{2p}\right]&\leq\norm{V(x)}_{L^\infty}^{2p}\E\left[\sup_{0\leq t\leq T}\left(\int_0^t\norm{u(r)}_{L^2}\,\dd r\right)^{2p}\right]\\
&\leq C\E\left[\sup_{0\leq t\leq T}\left(\int_0^t\norm{u(r)}_{L^2}\,\dd r\right)^{2p}\right].
\end{align*}
The same estimate follows from similar calculations as in (i).

\item We now assume $\sigma=1$, $d=2,3$ and $V(x)\in H^3.$ The proof follows the same lines as in (ii), noting that
$$
\norm{V(x)u(r)}_1\leq C\norm{u(r)}_1,
$$
as seen above.
\end{enumerate}

For the third and last term, using Burkholder's inequality, we obtain
\begin{align*}
\E\left[\sup_{0\leq t\leq T}\gnorm{\int_0^t S(t-r)\,\dd W(r)}^{2p}\right]&\leq C\E\left[\left(\int_0^T\norm{Q^{1/2}}_{\mathcal{L}_2^\sigma}^2\,\dd t\right)^p\right]\\
&\leq CT^p\E[\norm{Q^{1/2}}_{\mathcal{L}_2^\sigma}^{2p}]\leq C.
\end{align*}

Altogether we arrive at
\begin{align*}
\E\left[\sup_{0\leq t\leq T}\gnorm{u(t)}^{2p}\right]\leq C_1+C_2\int_0^T\E[\sup_{0\leq s\leq t}\gnorm{u(s)}^{2p}]\,\dd t,
\end{align*}
and an application of Gronwall's lemma completes the proof.
\end{proof}

\subsection{Error analysis of the stochastic exponential method}
Our exponential scheme for the time integration of \eqref{sseMP} now reads
\begin{align}\label{schemeV}
u^{n}&=S(k)u^{n-1}-\ii kS(k)V(x)u^{n-1}-\ii S(k)\Delta W^{n-1}\nonumber\\
&=S(t_n)u_0-\ii\sum_{j=0}^{n-1}\int_{t_j}^{t_{j+1}}S(t_n-t_j)V(x)u^j\,\dd r-\ii\sum_{j=0}^{n-1}\int_{t_j}^{t_{j+1}}S(t_n-t_j)\,\dd W(r),
\end{align}
where, as before, $k$ denotes the step size, $t_n=nk$ for $n=0,\ldots ,N$, and $\Delta W^{n-1}=W(t_{n})-W(t_{n-1})$. 
We first show that these numerical approximations are bounded.

\begin{proposition}\label{numbound}
Assume that $\E[\gnorm{u_0}^{2p}]<\infty$ for some $p\in\mathbb{N}$, and $Q^{1/2}\in\mathcal{L}_2^\sigma$. As in Proposition \ref{potVbdd}, we also assume
\begin{enumerate}[label=(\roman*)]
\item $\sigma>\frac{d}{2}$, and $V(x)\in H^\sigma,$ or
\item $\sigma=0$ and $d=1,2,3$, and $V(x)\in H^2$, or
\item $\sigma=1$ and $d=2,3$, and $V(x)\in H^3$.
\end{enumerate}
Then the numerial solution given by \eqref{schemeV}, with step size $k$, satisfies
$$
\E[\gnorm{u^n}^{2p}]\leq C\quad\text{for}\quad 0\leq t_n=nk\leq T.
$$
\end{proposition}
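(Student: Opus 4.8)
The plan is to mirror the proof of Proposition~\ref{potVbdd} in the discrete setting, replacing the continuous Gronwall argument by its discrete counterpart. I would start from the expanded form of the scheme \eqref{schemeV},
$$
u^n = S(t_n)u_0-\ii\sum_{j=0}^{n-1}\int_{t_j}^{t_{j+1}}S(t_n-t_j)V(x)u^j\,\dd r-\ii\sum_{j=0}^{n-1}\int_{t_j}^{t_{j+1}}S(t_n-t_j)\,\dd W(r),
$$
and split the $2p$-th power of its norm into three pieces using $\gnorm{a+b+c}^{2p}\leq C(\gnorm{a}^{2p}+\gnorm{b}^{2p}+\gnorm{c}^{2p})$. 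The goal is to bound the expectation of each piece and arrive at an inequality of the form $\E[\gnorm{u^n}^{2p}]\leq C_1+C_2 k\sum_{j=0}^{n-1}\E[\gnorm{u^j}^{2p}]$, to which the discrete Gronwall lemma applies.

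For the first term, Lemma~\ref{lemma1} gives the isometry $\gnorm{S(t_n)u_0}=\gnorm{u_0}$, so its contribution is bounded by $C$ thanks to the assumption $\E[\gnorm{u_0}^{2p}]<\infty$. For the drift term I would again use the isometry of $S(t_n-t_j)$ together with the multiplication estimate $\gnorm{V(x)u^j}\leq C\gnorm{u^j}$, which holds in each of the cases (i)--(iii) exactly as established in the proof of Proposition~\ref{potVbdd}. Evaluating the trivial $r$-integrals (each integrand is constant in $r$) turns the sum into $k\sum_{j=0}^{n-1}\gnorm{u^j}$, and a discrete H\"older inequality yields $\bigl(k\sum_{j=0}^{n-1}\gnorm{u^j}\bigr)^{2p}\leq (nk)^{2p-1}k\sum_{j=0}^{n-1}\gnorm{u^j}^{2p}\leq CT^{2p-1}k\sum_{j=0}^{n-1}\gnorm{u^j}^{2p}$. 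Taking expectations produces the desired $C k\sum_{j=0}^{n-1}\E[\gnorm{u^j}^{2p}]$. The stochastic term is the simplest, since it does not involve the numerical solution: writing it as a single stochastic integral with piecewise-constant integrand and applying Burkholder's inequality together with the Hilbert--Schmidt isometry $\norm{S(t_n-t_j)Q^{1/2}}_{\mathcal{L}_2^\sigma}=\norm{Q^{1/2}}_{\mathcal{L}_2^\sigma}$ (the consequence of Lemma~\ref{lemma1}) bounds it by $CT^p\norm{Q^{1/2}}_{\mathcal{L}_2^\sigma}^{2p}\leq C$.

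Collecting the three bounds gives $\E[\gnorm{u^n}^{2p}]\leq C_1+C_2 k\sum_{j=0}^{n-1}\E[\gnorm{u^j}^{2p}]$, and the discrete Gronwall lemma then yields $\E[\gnorm{u^n}^{2p}]\leq C_1\e^{C_2 t_n}\leq C_1\e^{C_2 T}$, uniformly for $0\leq t_n\leq T$. I do not expect a serious obstacle here, since the whole argument parallels Proposition~\ref{potVbdd} and reuses its case analysis; the only point requiring care is the bookkeeping in the discrete H\"older step and ensuring the stochastic term is treated as a genuine martingale so that Burkholder applies, while noting that it is independent of $u^j$ and hence contributes only a constant rather than entering the Gronwall recursion.
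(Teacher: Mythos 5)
Your proposal is correct and follows essentially the same route as the paper's proof: the same three-term decomposition of the expanded scheme, the isometry of $S$ from Lemma~\ref{lemma1} combined with the multiplication estimate $\gnorm{V(x)u^j}\leq C\gnorm{u^j}$ and a discrete H\"older step for the drift term, Burkholder's inequality for the (solution-independent) stochastic term, and a discrete Gronwall argument to close. The paper likewise proves only case (i) and refers to Proposition~\ref{potVbdd} for the other cases, exactly as you do.
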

\begin{proof}
We prove the case (i), the other cases are treated as in the proof of Proposition~\ref{potVbdd}. We have
\begin{align*}
\E[\gnorm{u^n}^{2p}]&\leq C\left( \E[\gnorm{S(t_n)u_0}^{2p}]+\E\left[\gnorm{\ii k\sum_{j=0}^{n-1}S(t_n-t_j)V(x)u^j}^{2p}\right]\right.\\
&\quad+\left.\E\left[\gnorm{\ii\sum_{j=0}^{n-1}\int_{t_j}^{t_{j+1}}S(t_n-t_j)\,\dd W(r)}^{2p}\right]\right).
\end{align*}
The first term is estimated as in Proposition~\ref{potVbdd}. For the second term, using H\"{o}lder's inequality 
and the fact that $V(x)\in H^\sigma$, we have
\begin{align*}
\gnorm{k\sum_{j=0}^{n-1}S(t_n-t_j)V(x)u^j}&\leq k\sum_{j=0}^{n-1}\gnorm{V(x)u^j}\\
&\leq Ck\sum_{j=0}^{n-1}\gnorm{u^j}\\
&\leq Ck^\frac{1}{2p}\left(\sum_{j=0}^{n-1}\gnorm{u^j}^{2p}\right)^\frac{1}{2p}.
\end{align*}
For the third term we use the Burkholder inequality and obtain
\begin{align*}
\E\left[\gnorm{\sum_{j=0}^{n-1}\int_{t_j}^{t_{j+1}}S(t_n-t_j)\,\dd W(r)}^{2p}\right]&=\E\left[\gnorm{\int_0^{t_n}S(t_n-[r/k]k)\,\dd W(r)}^{2p}\right]\\
&\leq C\E\left[\left(\int_0^T\norm{Q^{1/2}}_{\mathcal{L}_2^\sigma}^2\,\dd t\right)^p\right]\\
&\leq C,
\end{align*}
where, as before, $[r/k]$ denotes the integer part of $r/k$. 

Altogether we arrive at 
$$
\E[\gnorm{u^n}^{2p}]\leq C_1+C_2k\sum_{j=0}^{n-1}\E[\gnorm{u^j}^{2p}]
$$
and an application of Gronwall's lemma concludes the proof.
\end{proof}

We are now ready to prove the main result of this section on the convergence of our numerical method.
\begin{theorem}\label{errorVadd}
Consider the time discretisation of the stochastic Schr\"odinger equation \eqref{sseMP} 
given by the exponential integrator \eqref{schemeV}.   
Let $d=1,2,3$, $\sigma\geq 0$, and assume 
that $\E[\norm{u_0}_{\sigma+2}^{2p}]<\infty$ for some $p\in\mathbb{N}$, $V(x)\in H^{\sigma+2}$, and $Q^{1/2}\in\mathcal{L}_2^{\sigma+2}$. 
Then there exists a constant $C$ such that
$$\E\left[\max_{n=1,\ldots,N}\gnorm{u^n-u(t_n)}^{2p}\right]\leq Ck^{2p}.$$
\end{theorem}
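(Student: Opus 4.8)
The plan is to imitate the proof of Theorem~\ref{maxlinear}. Iterating both the mild equation \eqref{mildpotadd} and the scheme \eqref{schemeV} back to $t_0=0$ and subtracting, the error becomes
\begin{align*}
u^n-u(t_n)&=-\ii\sum_{j=0}^{n-1}\int_{t_j}^{t_{j+1}}\bigl(S(t_n-t_j)V(x)u^j-S(t_n-r)V(x)u(r)\bigr)\,\dr\\
&\quad-\ii\sum_{j=0}^{n-1}\int_{t_j}^{t_{j+1}}\bigl(S(t_n-t_j)-S(t_n-r)\bigr)\,\dd W(r).
\end{align*}
The second sum is precisely the quantity estimated in Theorem~\ref{maxlinear}, so its contribution to $\E[\max_n\gnorm{\,\cdot\,}^{2p}]$ is already known to be $\le Ck^{2p}\norm{Q^{1/2}}_{\mathcal{L}_2^{\sigma+2}}^{2p}$ via Lemma~\ref{lemma1}, \eqref{ineq1} and Burkholder's inequality \cite[Lemma~7.2]{DaPrato}. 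Everything therefore reduces to the first (potential) sum, which I denote $I_n$.

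First I would split the integrand of $I_n$ into an accumulated-error term, a semigroup-consistency term, and a temporal increment of the exact solution:
\begin{align*}
S(t_n-t_j)V(x)u^j-S(t_n-r)V(x)u(r)&=S(t_n-t_j)V(x)\bigl(u^j-u(t_j)\bigr)\\
&\quad+\bigl(S(t_n-t_j)-S(t_n-r)\bigr)V(x)u(t_j)\\
&\quad+S(t_n-r)V(x)\bigl(u(t_j)-u(r)\bigr).
\end{align*}
Summation and integration turn the first term into $\ii k\sum_{j=0}^{n-1}S(t_n-t_j)V(x)(u^j-u(t_j))$; treating $V(x)$ as a pointwise multiplier on $H^\sigma$ (legitimate because $V(x)\in H^{\sigma+2}$ and $\sigma+2>d/2$ for $d\le 3$ makes $H^{\sigma+2}$ a multiplier algebra) and applying the discrete H\"older inequality exactly as in Proposition~\ref{numbound} produces the Gronwall term $Ck\sum_{j=0}^{n-1}\E[\max_{m\le j}\gnorm{u^m-u(t_m)}^{2p}]$. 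For the consistency term I would write $S(t_n-t_j)-S(t_n-r)=S(t_n-r)\bigl(S(r-t_j)-I\bigr)$ and use the isometry together with the estimate $\norm{(S(r-t_j)-I)w}_\sigma\le(r-t_j)\norm{\Delta w}_\sigma$ of Lemma~\ref{lemma1}, gaining a factor $(r-t_j)\le k$ at the price of two derivatives on $V(x)u(t_j)$; after summation this is controlled by $Ck\sup_{0\le t\le T}\norm{u(t)}_{\sigma+2}$, and the needed moment bound $\E[\sup_t\norm{u(t)}_{\sigma+2}^{2p}]\le C$ is Proposition~\ref{potVbdd} applied with regularity index $\sigma+2$ (case~(i), admissible since $\sigma+2>d/2$).

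For the temporal-increment term I would insert the mild solution restarted at $t_j$, namely $u(r)=S(r-t_j)u(t_j)-\ii\int_{t_j}^r S(r-s)V(x)u(s)\,\ds-\ii\int_{t_j}^r S(r-s)\,\dd W(s)$ for $r\in[t_j,t_{j+1}]$, so that $u(t_j)-u(r)$ splits into $(I-S(r-t_j))u(t_j)$, a deterministic drift integral, and the stochastic convolution increment $\ii\int_{t_j}^r S(r-s)\,\dd W(s)$. The first two pieces are of order $k$ by Lemma~\ref{lemma1} and the above solution bound. The stochastic increment is the crux of the proof: since a stochastic convolution is only H\"older continuous of order $1/2$ in time, a pathwise, term-by-term estimate would give merely order $k^{1/2}$. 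The decisive observation is that this increment is still integrated against $\dr$ over $[t_j,t_{j+1}]$, so the stochastic Fubini theorem converts the resulting double integral into a single Ito integral,
\begin{align*}
\sum_{j=0}^{n-1}\int_{t_j}^{t_{j+1}}\!\int_{t_j}^r S(t_n-r)V(x)S(r-s)\,\dd W(s)\,\dr
=\int_0^{t_n}\Bigl(\int_s^{t_{[s/k]+1}}S(t_n-r)V(x)S(r-s)\,\dr\Bigr)\dd W(s),
\end{align*}
whose operator-valued integrand has $\mathcal{L}(H^\sigma)$-norm $\le Ck$ because the inner integral runs over an interval of length at most $k$ and $S$ is an isometry. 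Burkholder's inequality combined with \eqref{ineq1} then yields order $k^{2p}$. To accommodate the outer maximum over $n$ I would factor out the $n$-dependence through the group identity $S(t_n-r)=S(t_n)S(-r)$, reducing $\max_n$ to the supremum in $t$ of a single martingale with $t$-independent integrand before applying the maximal form of Burkholder's inequality; this is the same device that makes the noise estimate of Theorem~\ref{maxlinear} rigorous. Collecting the five contributions gives, with $\phi_n:=\E[\max_{m\le n}\gnorm{u^m-u(t_m)}^{2p}]$, the recursive bound $\phi_n\le Ck^{2p}+Ck\sum_{j=0}^{n-1}\phi_j$, and the discrete Gronwall lemma closes the argument with $\phi_N\le Ck^{2p}\e^{CT}$. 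I expect the Fubini step---keeping the rate at $1$ rather than $1/2$ for the rough stochastic increment---to be the main obstacle.
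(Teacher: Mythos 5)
Your proposal is correct and follows essentially the same route as the paper: the identical three-way splitting of the potential error (accumulated error, semigroup consistency, temporal increment), the insertion of the restarted mild solution, the stochastic Fubini argument to recover order $k$ (rather than $k^{1/2}$) from the stochastic convolution increment, and a discrete Gronwall closure. The only cosmetic difference is your use of the group identity $S(t_n-r)=S(t_n)S(-r)$ to justify passing the maximum over $n$ inside Burkholder's inequality, which is if anything a slightly more careful rendering of the step the paper performs directly.
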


\begin{remark}
Using similar assumptions as in Hypothesis~$4.1$ and~$4.2$ in 
\cite{Weakandstrongorderofconv}, we could replace the term $V(x)u$ in \eqref{sseMP} 
by a general Lipschitz function $F(u)$. That is, if we assume $\sigma>\frac{d}{2}$, 
and $F\colon H^\sigma\to H^\sigma$ is a $C^2$ function that is bounded as well 
as its derivatives up to order $2$, that $(F(u(t))\in L^{2p}(\Omega;L^\infty(0,T; H^{\sigma+2}))$, 
that the solution $(u(t))_{t_\in[0,T]}$ is in $L^{2p}(\Omega;L^\infty(0,T;H^{\sigma+2}))$ 
and $L^{4p}(\Omega,L^\infty(0,T,H^{\sigma+1}))$, and $u_0\in L^{2p}(\Omega; H^{\sigma+2})$, then the convergence order stated 
in Theorem~\ref{errorVadd} remains unchanged.
\end{remark}

\begin{proof}
As before, there are three cases to consider:
\begin{enumerate}[label=(\roman*)]
\item $\sigma>\frac{d}{2}$,
\item $\sigma=0$ and $d=1,2,3$,
\item $\sigma=1$ and $d=2,3$.
\end{enumerate}
We will prove the case (i) where $\sigma>\frac{d}{2}$, so that $H^\sigma$ forms an algebra. 
In the cases (ii) and (iii), $H^\sigma$ does not form an algebra, but these two cases can 
be treated in a similar way as in the proof of Proposition~\ref{potVbdd}. We consider the error 
$$
u^n-u(t_n)=\text{Err}^n_V + \text{Err}^n_W,
$$
where we have defined
$$
\text{Err}^n_V:=\ii\sum_{j=0}^{n-1}\int_{t_j}^{t_{j+1}} (S(t_n-r)V(x)u(r)-S(t_n-t_j)V(x)u^j)\,\dd r,
$$
and
$$
\text{Err}^n_W:=\ii\sum_{j=0}^{n-1}\int_{t_j}^{t_{j+1}}(S(t_n-r)-S(t_n-t_j))\,\dd W(r).
$$
This then gives us 
$$
\E[\max_{n=1,\ldots, N}\gnorm{u^n-u(t_n)}^{2p}]\leq C\left(\E[\max_{n=1,\ldots, N}\gnorm{\text{Err}^n_V}^{2p}]+\E[\max_{n=1,\ldots N}\gnorm{\text{Err}^n_W}^{2p}]\right).
$$
The second term on the right-hand side is the same term as in the proof of Theorem~\ref{maxlinear} and we thus get
$$
\E[\max_{n=1,\ldots, N}\gnorm{\text{Err}^n_W}^{2p}]\leq Ck^{2p}.
$$
In order to estimate $\text{Err}^n_V$, we first write it as
\begin{align*}
\text{Err}^n_V&=i\sum_{j=0}^{n-1}\int_{t_j}^{t_{j+1}} S(t_n-r)V(x)(u(r)-u(t_j))\,\dd r\\
&\quad+i\sum_{j=0}^{n-1}\int_{t_j}^{t_{j+1}}(S(t_n-r)-S(t_n-t_j))V(x)u(t_j)\,\dd r\\
&\quad+i\sum_{j=0}^{n-1}\int_{t_j}^{t_{j+1}}S(t_n-t_j)V(x)(u(t_j)-u^j)\,\dd r\\
&=: I^n_1+I^n_2+I^n_3.
\end{align*}
The estimate for $I^n_1$ is more complicated than the estimates for $I^n_2$ and $I^n_3$, so we save it for last. Using Lemma \ref{lemma1} and that $H^{\sigma+2}$ is an algebra, we have
\begin{align*}
\gnorm{I^n_2}&\leq \sum_{j=0}^{n-1}\int_{t_j}^{t_{j+1}}\gnorm{(S(t_n-r)-S(t_n-t_j))V(x)u(t_j)}\,\dd r\\
&=\sum_{j=0}^{n-1}\int_{t_j}^{t_{j+1}} \gnorm{S(t_n-t_j)(S(t_j-r)-I)V(x)u(t_j)}\,\dd r\\
&\leq\sum_{j=0}^{n-1}\int_{t_j}^{t_{j+1}}|t_j-r|\gnorm{\Delta(V(x)u(t_j))}\,\dd r\\
&\leq Ck^2\sum_{j=0}^{n-1}\norm{V(x)u(t_j)}_{\sigma+2}\\
&\leq Ck^2\norm{V(x)}_{\sigma+2}\sum_{j=0}^{n-1} \norm{u(t_j)}_{\sigma+2}.
\end{align*}
Now, using H\"{o}lder's inequality we have
\begin{align*}
\sum_{j=0}^{n-1} \norm{u(t_j)}_{\sigma+2}&\leq \left(\sum_{j=0}^{n-1}\norm{u(t_j)}_{\sigma+2}^{2p}\right)^{\frac{1}{2p}}k^{\frac{1-2p}{2p}}\\
&\leq Ck^{-1}\left(\sup_{0\leq t\leq T}\norm{u(t)}_{\sigma+2}^{2p}\right)^{\frac{1}{2p}}.
\end{align*}
Thus
$$\E[\max_{n=1,\ldots,N}\gnorm{I^n_2}^{2p}]\leq Ck^{2p}\E[\sup_{0\leq t\leq T}\norm{u(t)}_{\sigma+2}^{2p}]\leq Ck^{2p},$$
by Proposition \ref{potVbdd}.

Again using the fact that $H^\sigma$ is an algebra, and then H\"{o}lder's inequality, we have
\begin{align*}
\gnorm{I^n_3}&\leq \sum_{j=0}^{n-1}\int_{t_j}^{t_{j+1}}\gnorm{V(x)(u(t_j)-u^j)}\,\dd r\\
&\leq C\gnorm{V(x)}\sum_{j=0}^{n-1}\int_{t_j}^{t_{j+1}} \gnorm{u(t_j)-u^j}\,\dd r\\
&\leq Ck\sum_{j=0}^{n-1}\gnorm{u(t_j)-u^j}\\
&\leq Ck\left(\sum_{j=0}^{n-1}\gnorm{u(t_j)-u^j}^{2p}\right)^\frac{1}{2p}k^\frac{1-2p}{2p}\\
&\leq Ck^\frac{1}{2p}\left(\sum_{j=0}^{n-1}\gnorm{u(t_j)-u^j}^{2p}\right)^\frac{1}{2p}.
\end{align*}
Thus
$$\E[\max_{n=1,\ldots,N}\gnorm{I^n_3}^{2p}]\leq Ck\sum_{j=0}^{N-1}\E[\max_{l=0,\ldots,j}\gnorm{u(t_l)-u^l}^{2p}].$$

In order to estimate $I^n_1$ we use the mild formulation of the exact solution on the interval $[t_j,r]$ 
$$u(r)=S(r-t_j)u(t_j)-\ii\int_{t_j}^r S(r-\rho)V(x)u(\rho)\,\dd\rho-\ii\int_{t_j}^r S(r-\rho)\,\dd W(\rho),$$
and therefore
$$u(r)-u(t_j)=(S(r-t_j)-I)u(t_j)-\ii\int_{t_j}^r S(r-\rho)V(x)u(\rho)\,\dd\rho-\ii\int_{t_j}^r S(r-\rho)\,\dd W(\rho).$$
We have
\begin{align*}
I^n_1&=\ii\sum_{j=0}^{n-1}\int_{t_j}^{t_{j+1}} S(t_n-r)V(x)(u(r)-u(t_j))\,\dd r\\
&=i\sum_{j=0}^{n-1}\int_{t_j}^{t_{j+1}} S(t_n-r)V(x)(S(r-t_j)-I)u(t_j)\,\dd r\\
&\quad+\sum_{j=0}^{n-1}\int_{t_j}^{t_{j+1}} S(t_n-r)V(x)\left(\int_{t_j}^r S(r-\rho)V(x)u(\rho)\,\dd\rho\right)\,\dd r\\
&\quad+\sum_{j=0}^{n-1}\int_{t_j}^{t_{j+1}} S(t_n-r)V(x)\left(\int_{t_j}^r S(r-\rho)\,\dd W(\rho)\right)\,\dd r\\
&=: J^n_1+J^n_2+J^n_3.
\end{align*}
Using Lemma~\ref{lemma1}, the fact that $H^\sigma$ is an algebra, H\"{o}lder's inequality, 
and Proposition \ref{potVbdd}, we obtain
\begin{align*}
\gnorm{J^n_1}&\leq\sum_{j=0}^{n-1}\int_{t_j}^{t_{j+1}}\gnorm{S(t_n-r)V(x)(S(r-t_j)-I)u(t_j)}\,\dd r\\
&\leq C\gnorm{V(x)}\sum_{j=0}^{n-1}\int_{t_j}^{t_{j+1}}|r-t_j|\norm{u(t_j)}_{\sigma+2}\,\dd r\\
&\leq Ck^2\sum_{j=0}^{n-1}\norm{u(t_j)}_{\sigma+2}\\
&\leq Ck\left(\sup_{0\leq t\leq T}\norm{u(t)}_{\sigma+2}^{2p}\right)^{\frac{1}{2p}}\\
&\leq Ck,
\end{align*}
so that
$$
\E[\max_{n=1,\ldots,N}\gnorm{J^n_1}^{2p}]\leq Ck^{2p}.
$$

Next, we estimate $J^n_2$. Using Lemma~\ref{lemma1}, the fact that $H^\sigma$ is an algebra, 
and H\"{o}lder's inequality, we get
\begin{align*}
\gnorm{J^n_2}&\leq \sum_{j=0}^{n-1}\int_{t_j}^{t_{j+1}}\gnorm{S(t_n-r)V(x)\left(\int_{t_j}^r S(r-\rho)V(x)u(\rho)\,\dd\rho\right)}\,\dd r\\
&\leq C\gnorm{V(x)}\sum_{j=0}^{n-1}\int_{t_j}^{t_{j+1}}\int_{t_j}^r\gnorm{V(x)}\gnorm{u(\rho)}\,\dd\rho\,\dd r\\
&\leq C\sum_{j=0}^{n-1}\int_{t_j}^{t_{j+1}}|r-t_j|^{\frac{2p-1}{2p}}\left(\int_{t_j}^r \gnorm{u(\rho)}^{2p}\,\dd\rho\right)^\frac{1}{2p}\,\dd r\\
&\leq C\sum_{j=0}^{n-1}\int_{t_j}^{t_{j+1}}|r-t_j|\left(\sup_{t_j\leq\rho\leq r}\gnorm{u(\rho)}^{2p}\right)^\frac{1}{2p}\,\dd r\\
&\leq Ck\left(\sup_{0\leq t\leq t_n}\gnorm{u(t)}^{2p}\right)^\frac{1}{2p},
\end{align*}
and thus, by Proposition \ref{potVbdd},
$$\E[\max_{n=1,\ldots,N}\gnorm{J^n_2}^{2p}]\leq Ck^{2p}\E[\sup_{0\leq t\leq T}\gnorm{u(t)}^{2p}]\leq Ck^{2p}.$$

Finally, using Fubini's theorem (when changing the order of integration, 
we go from $t_j\leq r\leq t_{j+1}$ and $t_j\leq\rho\leq r$ to 
$t_j\leq\rho\leq t_{j+1}$ and $\rho\leq r\leq t_{j+1}$), 
the Burkholder inequality, Lemma~\ref{lemma1}, and the fact that $H^\sigma$ is an algebra, we have 
\begin{align*}
\E[\max_{n=1,\ldots,N}\gnorm{J^n_3}^{2p}]=&\E\left[\max_{n=1,\ldots,N}\gnorm{\sum_{j=0}^{n-1}\int_{t_j}^{t_{j+1}} S(t_n-r)V(x)\left(\int_{t_j}^r S(r-\rho)\,\dd W(\rho)\right)\,\dd r}^{2p}\right]\\
&=\E\left[\max_{n=1,\ldots,N}\gnorm{\sum_{j=0}^{n-1}\int_{t_j}^{t_{j+1}}\int_\rho^{t_{j+1}}S(t_n-r)V(x)S(r-\rho)\,\dd r\,\dd W(\rho)}^{2p}\right]\\
&=\E\left[\max_{n=1,\ldots,N}\gnorm{\int_0^{t_n}\int_\rho^{[\frac{\rho}{k}+1]k}S(t_n-r)V(x)S(r-\rho)\,\dd r\,\dd W(\rho)}^{2p}\right]\\
&\leq \E\left[\sup_{0\leq t\leq T}\gnorm{\int_0^t\int_\rho^{[\frac{\rho}{k}+1]k}S(t-r)V(x)S(r-\rho)\,\dd r\,\dd W(\rho)}^{2p}\right]\\
&\leq C\sup_{0\leq t\leq T}\E\left[\norm{\int_0^t\int_\rho^{[\frac{\rho}{k}+1]k}S(t-r)V(x)S(r-\rho)\,\dd r\,\dd W(\rho)}_\sigma^{2p}\right]\\
&\leq C\sup_{0\leq t\leq T}\E\left[\left(\int_0^t\norm{\int_\rho^{[\frac{\rho}{k}+1]k}S(t-r)V(x)S(r-\rho)Q^{1/2}\,\dd r}_{\mathcal{L}_2^\sigma}^2\,\dd \rho\right)^p\right]\\
&=C\max_{n=1,\ldots,N}\E\left[\left(\sum_{j=0}^{n-1}\int_{t_j}^{t_{j+1}}\norm{\int_\rho^{t_{j+1}}S(t-r)V(x)S(r-\rho)Q^{1/2}\,\dd r}_{\mathcal{L}_2^\sigma}^2\,\dd \rho\right)^p\right]\\
&\leq C\max_{n=1,\ldots,N}\E\left[\left(\sum_{j=0}^{n-1}\int_{t_j}^{t_{j+1}}\left(\int_\rho^{t_{j+1}}\gnorm{V(x)}\norm{Q^{1/2}}_{\mathcal{L}_2^\sigma}\,\dd r\right)^2\,\dd \rho\right)^p\right]\\
&\leq C\E\left[\left(\sum_{j=0}^{N-1}\int_{t_j}^{t_{j+1}}|t_{j+1}-\rho|^2\,\dd \rho\right)^p\right]\\
&\leq Ck^{2p},
\end{align*}
where we recall that $[\frac{\rho}{k}+1]$ denotes the integer part 
of $\frac{\rho}{k}+1$, where $k$ is the step size of the scheme.

To summarize, we have obtained the following bound  
$$
\E[\max_{n=1,\ldots,N}\gnorm{u^n-u(t_n)}^{2p}]\leq C_1k^{2p}+C_2k\sum_{j=0}^{N-1}\E[\max_{l=0,\ldots,j}\gnorm{u^l-u(t_l)}^{2p}],
$$
and an application of Gronwall's lemma completes the proof.
\end{proof}

\subsection{A trace formula for the mass}
With appropriate boundary conditions, such as when the domain is the $d$-dimensional torus $\mathbb{T}^d$, the expected mass
$$
\E[M(u)]=\E\left[\int_{\mathbb{T}^d}|u|^2\,\dd x\right]
$$
still exhibits linear growth. On a smooth compact Riemannian manifold $(\widetilde{M},g)$, such as the $d$-dimensional torus, we still have $H^2(\widetilde{M})\subset L^\infty(\widetilde{M})$ if $d=1,2,3$ (see \cite[Theorem 2.7]{Hebey}). In particular, Propositions \ref{potVbdd} and \ref{numbound} and Theorem \ref{errorVadd} above remain valid when $\mathbb{R}^d$ is replaced by $\mathbb{T}^d$. Therefore, in this subsection, we assume that the spatial domain is the $d$-dimensional torus $\mathbb{T}^d$.

\begin{proposition}\label{MassExact}
Let $d=1,2,3$ and assume that the initial data is such that $\E[M(u_0)]$ is finite, that $V(x)\in H^2$, and that $Q$ is trace-class. Then the exact solution of \eqref{sseMP}, given by equation \eqref{mildpotadd}, satisfies the trace formula
$$
\E[M(u(t))]=\E[M(u_0)]+t\Tr(Q),
$$
for $t\geq 0$.
\end{proposition}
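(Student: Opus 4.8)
The plan is to prove this via Ito's formula rather than by expanding the second moment of the mild solution \eqref{mildpotadd} directly: with a potential present, the Duhamel term $-\ii\int_0^t S(t-r)V(x)u(r)\,\dr$ depends on the driving noise up to time $r$ and is therefore correlated with the stochastic convolution, so the cross terms in a direct computation of $\E[\norm{u(t)}_{L^2}^2]$ no longer vanish as they did in Proposition~\ref{prop:MassL}. Instead, I would write \eqref{sseMP} in differential form as $\dd u=-\ii\bigl(\Delta u+V(x)u\bigr)\,\dt-\ii\,\dd W$ and apply the infinite-dimensional Ito formula (\cite[Theorem~4.17]{DaPrato}) to the functional $\psi(u)=\norm{u}_{L^2}^2=M(u)$, for which $\psi'(u)h=2(u,h)$ and $\psi''(u)=2I$, where $(\cdot,\cdot)$ is the real inner product \eqref{innerp}.

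This yields three contributions to $\dd M(u)$. The drift contribution is $2\bigl(u,-\ii(\Delta+V(x))u\bigr)$; since $V$ is real-valued, $\Delta+V(x)$ is self-adjoint on $\mathbb{T}^d$, so (integration by parts on the torus producing no boundary terms) the Hermitian pairing $\langle u,(\Delta+V(x))u\rangle$ is real, whence $2\bigl(u,-\ii(\Delta+V(x))u\bigr)=2\,\mathrm{Re}\bigl(\ii\langle u,(\Delta+V(x))u\rangle\bigr)=0$. This is nothing but the statement that the deterministic flow $\e^{-\ii t(\Delta+V)}$ is a group of isometries and hence conserves mass. The martingale contribution $2(u,-\ii\,\dd W)$ has zero expectation. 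The Ito correction term is $\sum_k\norm{-\ii Q^{1/2}e_k}_{L^2}^2\,\dt$, and because multiplication by $-\ii$ is an isometry of the real Hilbert space $L^2$, this equals $\norm{Q^{1/2}}_{\mathcal{L}_2^0}^2\,\dt=\Tr(Q)\,\dt$. Collecting these, $\dd M(u)=2(u,-\ii\,\dd W)+\Tr(Q)\,\dt$; taking expectations and integrating over $[0,t]$ gives $\E[M(u(t))]=\E[M(u_0)]+t\Tr(Q)$.

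The main obstacle is the rigorous justification of Ito's formula, since under the present hypotheses ($V\in H^2$, $Q$ trace-class, $\E[M(u_0)]$ finite, corresponding to case~(ii) of Proposition~\ref{potVbdd}) the solution is controlled only at the $L^2$ level, so the unbounded term $\Delta u$ need not lie in $L^2$ and the pairing $(u,-\ii\Delta u)$ is only formal. I would handle this by a standard approximation argument---for instance, truncating to the first $N$ Fourier modes (equivalently replacing $\Delta$ by its Yosida approximation and $Q^{1/2}$ by $P_NQ^{1/2}$)---for which the regularised dynamics is well-posed and the drift pairing still vanishes identically, the key point being that $\Delta+P_NV P_N$ remains self-adjoint. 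One then passes to the limit $N\to\infty$ using the moment bound $\E[\sup_{0\le t\le T}\norm{u(t)}_{L^2}^2]\le C$ from Proposition~\ref{potVbdd} to control the remainders, and uses $\norm{P_NQ^{1/2}}_{\mathcal{L}_2^0}^2\to\Tr(Q)$ for the correction term.
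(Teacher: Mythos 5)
Your proposal follows essentially the same route as the paper: the paper's proof also applies the infinite-dimensional Ito formula \cite[Theorem~4.17]{DaPrato} to $M(u)=\norm{u}_{L^2}^2$, observes that the drift pairing $(M'(u),-\ii(\Delta+V)u)$ vanishes by self-adjointness under the real inner product \eqref{innerp}, drops the martingale term in expectation, and identifies the Ito correction with $\Tr(Q)$. Your additional remarks --- on why the direct mild-solution expansion fails here and on justifying Ito's formula by Galerkin/Yosida regularisation at the $L^2$ level --- are sound and go somewhat beyond the level of detail the paper provides, but the core argument is identical.
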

\begin{proof}
This follows from the Ito formula \cite[Theorem $4.17$]{DaPrato}. We have
\begin{align*}
M(u(t))&=M(u_0)+\int_0^t\left( M'(u(s)),\Phi(s)\,\dd W(s)\right)+\int_0^t \left( M'(u(s)),\varphi(s)\right)\,\dd s\\
&\quad+\int_0^t\frac{1}{2}\Tr[M''(u(s))(\Phi(s)Q^{1/2})(\Phi(s)Q^{1/2})^*]\,\dd s,
\end{align*}
where $\varphi(s)=-\ii\Delta u(s)-\ii V(x)u(s)$ and $\Phi(s)=I$. 
The expected value of the first integral on the right-hand side is zero. For the second integral, we have
\begin{align*}
\left( M'(u(s)),\varphi(s)\right)&=(u(s),\varphi(s))+(\varphi(s),u(s))\\
&=\left(u(s),-\ii\Delta u(s)\right)+(u(s),-\ii V(x)u(s))\\
&\quad+(-\ii\Delta u(s),u(s))+(-\ii V(x)u(s),u(s))\\
&=0.
\end{align*}
Thus, we arrive at
\begin{align*}
\E[M(u(t))]&=\E[M(u_0)]+\frac{1}{2}\int_0^t \E[\Tr(M''(u(s))(Q^{1/2})(Q^{1/2})^*)]\,\dd s\\
&=\E[M(u_0)]+t\Tr(Q).
\end{align*}
\end{proof}

We now investigate how the expected mass of our exponential integrator behaves. 
As seen below we do not get an exact trace formula as for the linear problem considered in Section~\ref{sec-linadd}, 
but instead we get a trace formula with an error of size $\mathcal{O}(k)$ 
on compact intervals.
% \begin{proposition} 
% Let $d=1,2,3$ and assume that the initial data is such that $\E[M(u_0)]$ is finite, that $V(x)\in H^2$, and that $Q^{1/2}\in\mathcal{L}_2^2$. Then
% \begin{align*}
% \E[M(u^{n})]&=\E[M(u_0)]+t_n\Tr(Q)+\mathcal{O}(k).
% \end{align*}
% \end{proposition}
% \begin{proof}
% We add and subtract the mass for the exact solution.
% \begin{align*}
% \E[M(u^n)]&=\E[M(u(t_n))]+\E[M(u^n)-M(u(t_n))]\\
% &=\E[M(u_0)] + t_n\Tr(Q)+\E[M(u^n)-M(u(t_n))].
% \end{align*}
% It remains to show that $\E[M(u^n)-M(u(t_n))]\leq Ck$. Note that with the inner product defined by \eqref{innerp}, we have
% $$\norm{u^n}_{L^2}^2-\norm{u(t_n)}_{L^2}^2=(u^n+u(t_n),u^n-u(t_n)).$$
% Also, the assumption that the expected initial mass is finite corresponds to
% $$\E[\norm{u_0}_{L^2}^2]<\infty.$$
% Using the Cauchy-Schwarz inequality, H\"{o}lder's inequality, Propositions \ref{potVbdd} and \ref{numbound}, and Theorem \ref{errorVadd}, we have
% \begin{align*}
% \E[M(u^n)-M(u(t_n))]&=\E[\norm{u^n}_{L^2}^2-\norm{u(t_n)}_{L^2}^2]\\
% &=\E[(u^n+u(t_n),u^n-u(t_n))]\\
% &\leq \E[\norm{u^n+u(t_n)}_{L^2}\norm{u^n-u(t_n)}_{L^2}]\\
% &\leq \left(\E[\norm{u^n+u(t_n)}_{L^2}^2]\right)^{1/2}\left(\E[\norm{u^n-u(t_n)}_{L^2}^2]\right)^{1/2}\\
% &\leq Ck.
% \end{align*}
% \end{proof}
% 
\begin{proposition}\label{prop-traceP} 
Let $d=1,2,3$ and assume that the initial data is such that $\E[M(u_0)]$ is finite, that $V(x)\in H^2$, and that $Q$ is trace-class. Then the stochastic exponential integrator $u^n$ given by \eqref{schemeV} satisfies the following almost trace formula for the expected mass
$$
\E[M(u^n)]=\E[M(u_0)]+t_n\Tr(Q)+\mathcal{O}(k)\quad\text{for}\quad0\leq t_n=nk\leq T.
$$
\end{proposition}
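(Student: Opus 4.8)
The plan is to imitate the recursive computation of Proposition~\ref{prop:MassLNum} from the linear case, while carefully tracking the extra term generated by the explicit treatment of the potential. First I would rewrite a single step of the scheme~\eqref{schemeV} as
$$
u^n = S(k)\bigl((I-\ii kV(x))u^{n-1} - \ii\,\Delta W^{n-1}\bigr),
$$
and then invoke that $S(k)$ is an $L^2$-isometry (Lemma~\ref{lemma1} with $\sigma=0$) to drop the semigroup when evaluating $M(u^n)=\norm{u^n}_{L^2}^2$.

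Next I would expand $\norm{(I-\ii kV)u^{n-1} - \ii\,\Delta W^{n-1}}_{L^2}^2$ with respect to the real inner product~\eqref{innerp} and take expectations. The cross term between the $\mathcal{F}_{t_{n-1}}$-measurable part and the increment $\Delta W^{n-1}$ vanishes, since increments have mean zero and are independent of $\mathcal{F}_{t_{n-1}}$, while the increment itself contributes $\E[\norm{\Delta W^{n-1}}_{L^2}^2]=k\Tr(Q)$ exactly as in the linear case. The key observation is that the cross term hidden in $\norm{(I-\ii kV)u^{n-1}}_{L^2}^2$ also vanishes because $V$ is real: indeed $(u^{n-1},-\ii kVu^{n-1})=k\,\mathrm{Re}\bigl(\ii\int V|u^{n-1}|^2\,\dd x\bigr)=0$, so that
$$
\norm{(I-\ii kV)u^{n-1}}_{L^2}^2=\norm{u^{n-1}}_{L^2}^2+k^2\norm{Vu^{n-1}}_{L^2}^2.
$$
This produces the one-step identity
$$
\E[M(u^n)]=\E[M(u^{n-1})]+k\Tr(Q)+k^2\,\E[\norm{Vu^{n-1}}_{L^2}^2].
$$

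Finally I would iterate this recursion to get
$$
\E[M(u^n)]=\E[M(u_0)]+t_n\Tr(Q)+k^2\sum_{j=0}^{n-1}\E[\norm{Vu^{j}}_{L^2}^2],
$$
and estimate the remaining sum. Because $d\in\{1,2,3\}$ and $V\in H^2\subset L^\infty$, one has $\norm{Vu^j}_{L^2}\leq\norm{V}_{L^\infty}\norm{u^j}_{L^2}$, and the uniform bound $\E[\norm{u^j}_{L^2}^2]\leq C$ supplied by Proposition~\ref{numbound} (applied with $\sigma=0$ and $p=1$) yields
$$
k^2\sum_{j=0}^{n-1}\E[\norm{Vu^{j}}_{L^2}^2]\leq Ck^2 n=Ck\,t_n\leq CkT=\mathcal{O}(k).
$$
The feature that separates this from the exact trace formula of Section~\ref{sec-linadd} is precisely this genuine $k^2$ defect per step, arising from the explicit potential, which accumulates over the $\mathcal{O}(t_n/k)$ steps into an $\mathcal{O}(k)$ error instead of cancelling. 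The main point of the argument --- and what I would treat most carefully --- is verifying that this is the only surviving defect, in particular that the reality of $V$ and the isometry of $S(k)$ eliminate every term of order $k$ per step.
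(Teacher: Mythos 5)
Your proposal is correct and follows essentially the same route as the paper's proof: expand $\norm{u^n}_{L^2}^2$ using the $L^2$-isometry of $S(k)$, note that the $O(k)$ cross term vanishes because $V$ is real and the stochastic cross terms have zero expectation, and bound the accumulated $k^2\E[\norm{Vu^{n-1}}_{L^2}^2]$ defect via $H^2\subset L^\infty$ and the uniform moment bound of Proposition~\ref{numbound}. Your write-up is in fact slightly more explicit than the paper's in isolating the per-step defect and summing it.
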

\begin{proof}
First note that, by assumption, we have $\E[\norm{u_0}_{L^2}^2]<\infty$.

The numerical method reads
$$
u^n=S(k)u^{n-1}-\ii S(k)V(x)u^{n-1}k-\ii\int_{t_{n-1}}^{t_n}S(k)\,\dd W(r)
$$
and a straight-forward calculation gives
\begin{align*}
\norm{u^n}_{L^2}^2 &= \norm{S(k)u^{n-1}}_{L^2}^2+(S(k)u^{n-1},-\ii S(k)V(x)u^{n-1}k)\\
&\quad+\left(S(k)u^{n-1},-\ii\int_{t_{n-1}}^{t_n}S(k)\,\dd W(r)\right)+(-\ii S(k)V(x)u^{n-1}k,S(k)u^{n-1})\\
&\quad+\norm{\ii S(k)V(x)u^{n-1}k}_{L^2}^2+\left(-\ii S(k)V(x)u^{n-1}k,-\ii\int_{t_{n-1}}^{t_n}S(k)\,\dd W(r)\right)\\
&\quad+\left(-\ii\int_{t_{n-1}}^{t_n}S(k)\,\dd W(r),S(k)u^{n-1}\right)+\left(-\ii\int_{t_{n-1}}^{t_n}S(k)\,\dd W(r),-\ii S(k)V(x)u^{n-1}k\right)\\
&\quad+\norm{\ii\int_{t_{n-1}}^{t_n}S(k)\,\dd W(r)}_{L^2}^2.
\end{align*}
Next we observe that $(S(k)u^{n-1},-\ii S(k)V(x)u^{n-1}k)+(-\ii S(k)V(x)u^{n-1}k,S(k)u^{n-1})=0$ 
and the expected value of the terms containing one stochastic integral is zero. We thus have, using Lemma~\ref{lemma1}, 
Proposition~\ref{numbound} (case (ii)), and Ito's isometry,
\begin{align*}
\E[\norm{u^n}_{L^2}^2] &= \E[\norm{u^{n-1}}_{L^2}^2]+k^2\E[\norm{V(x)u^{n-1}}_{L^2}^2]+\E\left[\norm{\int_{t_{n-1}}^{t_n}S(k)\,\dd W(r)}_{L^2}^2\right]\\
&=\E[M(u^{n-1})]+Ck^2+\int_{t_{n-1}}^{t_n}\norm{Q^{1/2}}_{\mathcal{L}_2^0}^2\,\dd r\\
&=\E[M(u^{n-1})]+Ck^2+k\Tr(Q).
\end{align*}
An iteration completes the proof.
\end{proof}

\subsection{A trace formula for the energy}
As for the mass, with suitable boundary conditions, the expected energy will still grow linearly. 
We again consider the domain to be the $d$-dimensional torus and the energy is given by
$$
H(u)=\frac{1}{2}\int_{\mathbb{T}^d}|\nabla u|^2\,\dd x-\frac{1}{2}\int_{\mathbb{T}^d}V(x)|u|^2\,\dd x.
$$

\begin{proposition}
Let $d=1,2,3$ and assume that the initial data is such that $\E[H(u_0)]$ is finite, 
and that $Q^{1/2}\in\mathcal{L}_2^1$. If $d=1$, we assume $V(x)\in H^2$ and 
if $d=2,3$ we assume $V(x)\in H^3$. Then the exact solution of \eqref{sseMP}, 
given by equation \eqref{mildpotadd}, satisfies the trace formula
$$\E[H(u(t))]=\E[H(u_0)]+\frac{t}{2}(\Tr(\nabla Q\nabla)-\Tr(Q^{1/2}V(x)Q^{1/2})).$$
\end{proposition}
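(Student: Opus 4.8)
The plan is to mimic the It\^o-formula argument used in the proof of Proposition~\ref{MassExact}, but applied to the energy functional
$$
H(u)=\frac12\norm{\nabla u}_{L^2}^2-\frac12\int_{\mathbb{T}^d}V(x)|u|^2\,\dd x
$$
instead of the mass. First I would record that $H$ is a quadratic functional, so its derivatives are explicit: using the inner product \eqref{innerp} and integrating by parts on the torus (where no boundary terms appear), one has $H'(u)=-\Delta u-V(x)u$ and the constant second derivative $H''(u)=-\Delta-V(x)$. With these in hand, It\^o's formula \cite[Theorem~4.17]{DaPrato} applied to $H(u(t))$, with drift $\varphi(s)=-\ii\Delta u(s)-\ii V(x)u(s)$ and noise coefficient $\Phi(s)=I$ exactly as in Proposition~\ref{MassExact}, produces a mean-zero martingale term, a drift term $(H'(u),\varphi)$, and the It\^o correction $\tfrac12\Tr[H''(u)(\Phi Q^{1/2})(\Phi Q^{1/2})^*]$.

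Next I would dispose of the first two contributions. The expectation of the stochastic integral against $\dd W$ vanishes since it is a mean-zero martingale. The drift term vanishes by the deterministic energy-conservation identity: setting $a:=\Delta u+V(x)u$, one has $H'(u)=-a$ and $\varphi=-\ii a$, so that $(H'(u),\varphi)=(a,\ii a)=\mathrm{Re}\int a\,\overline{\ii a}\,\dd x=\mathrm{Re}\bigl(-\ii\norm{a}_{L^2}^2\bigr)=0$, where it is essential that $V$ is real-valued so that $\norm{a}_{L^2}^2$ is real. This is the step replacing the analogous $(M'(u),\varphi)=0$ computation, and it is where the skew-adjoint structure of the Schr\"odinger drift is used.

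It then remains to evaluate the It\^o correction, which with $\Phi(s)=I$ equals $\tfrac12\Tr[(-\Delta-V(x))Q]$. Splitting this into two traces and using cyclicity of the trace together with integration by parts gives $\Tr[(-\Delta)Q]=\sum_k\norm{\nabla Q^{1/2}e_k}_{L^2}^2=\Tr(\nabla Q\nabla)$ and $\Tr[V(x)Q]=\Tr(Q^{1/2}V(x)Q^{1/2})$, so the correction is $\tfrac12\bigl(\Tr(\nabla Q\nabla)-\Tr(Q^{1/2}V(x)Q^{1/2})\bigr)$. Since this is constant in $s$, taking expectations and integrating from $0$ to $t$ yields the claimed formula.

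The main obstacle is not the algebra but the rigorous justification of It\^o's formula: one must verify that $u(s)$ lives in the energy space with enough regularity for $H$, $H'$, and the trace term to be well defined along the solution. Finiteness of $\Tr(\nabla Q\nabla)$ follows from $Q^{1/2}\in\mathcal{L}_2^1$, and that of $\Tr(Q^{1/2}V(x)Q^{1/2})$ from $Q$ trace-class together with $V\in L^\infty$ (which holds since $H^2\subset L^\infty$ for $d=1,2,3$); the required spatial regularity of $u$ is supplied by Proposition~\ref{potVbdd} under the stated hypotheses on $V$ and $Q^{1/2}$. As noted in the remark after Proposition~\ref{prop:MassL}, one could alternatively compute $\E[\norm{\nabla u(t)}_{L^2}^2]$ and $\E[(V(x)u(t),u(t))]$ directly from the mild solution \eqref{mildpotadd}, but the Duhamel term involving $V$ makes this route more cumbersome than the It\^o argument.
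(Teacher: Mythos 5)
Your proposal is correct and follows essentially the same route as the paper: It\^o's formula applied to $H$ with $\varphi(s)=-\ii\Delta u(s)-\ii V(x)u(s)$ and $\Phi(s)=I$, vanishing of the martingale term in expectation and of the drift term $(H'(u),\varphi)$, and evaluation of the It\^o correction as $\tfrac12\bigl(\Tr(\nabla Q\nabla)-\Tr(Q^{1/2}V(x)Q^{1/2})\bigr)$. One small remark: $\norm{a}_{L^2}^2$ is real regardless of $V$; the reality of $V$ is actually needed earlier, to identify $H'(u)=-\Delta u-V(x)u$ as the gradient of $H$ with respect to the real inner product \eqref{innerp}.
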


\begin{proof}
Similar to the trace formula for the mass, we use the Ito formula
\begin{align*}
H(u(t))&=H(u_0)+\int_0^t\left( H'(u(s)),\Phi(s)\,\dd W(s)\right)+\int_0^t \left( H'(u(s)),\varphi(s)\right)\,\dd s\\
&\quad+\int_0^t\frac{1}{2}\Tr[H''(u(s))(\Phi(s)Q^{1/2})(\Phi(s)Q^{1/2})^*]\,\dd s,
\end{align*}
where $\varphi(s)=-\ii\Delta u(s)-\ii V(x)u(s)$ and $\Phi(s)=I$. 
The expected value of the first integral is seen to be zero, 
and a similar calculation as in the proof of Proposition~\ref{MassExact} 
shows that the second integral is zero as well. To calculate the third integral, we note that
\begin{align*}
\Tr(H''(u(s))(Q^{1/2})(Q^{1/2})^*)&=\sum_{n\geq 1}H''(u(s))(Q^{1/2}e_n,Q^{1/2}e_n)\\
&=\sum_{n\geq 1}(\nabla Q^{1/2}e_n,\nabla Q^{1/2}e_n)_{L^2}-\sum_{n\geq 1}(V(x)Q^{1/2}e_n,Q^{1/2}e_n)_{L^2}\\
&=\Tr(\nabla Q\nabla)-\Tr(Q^{1/2}V(x)Q^{1/2}).
\end{align*}
\end{proof}

The energy for the numerical approximation satisfies an almost trace formula: 
we get a small error of size $\mathcal{O}(k)$, as seen in the next proposition.
\begin{proposition} 
Let $d=1,2,3$ and assume that $\E[\norm{u_0}_2^2]\leq C.$ 
If $d=1$ then assume $V(x)\in H^2$. If $d=2,3$ then assume $V(x)\in H^3$. Also assume that $Q^{1/2}\in\mathcal{L}_2^2$. Then the stochastic exponential integrator $u^n$ given by \eqref{LinearScheme} satisfies the following almost trace formula for the expected energy
\begin{align*}
\E[H(u^{n})]&=\E[H(u_0)]+\frac{t_n}{2}(\Tr(\nabla Q\nabla)-\Tr(Q^{1/2}V(x)Q^{1/2}))+\mathcal{O}(k)\quad\text{for}\quad0\leq t_n=nk\leq T.
\end{align*}
\end{proposition}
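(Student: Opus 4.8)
The plan is to follow the one-step strategy already used for the mass in Proposition~\ref{prop-traceP}: I would establish a recursion of the form $\E[H(u^n)] = \E[H(u^{n-1})] + \frac{k}{2}\bigl(\Tr(\nabla Q\nabla) - \Tr(Q^{1/2}V(x)Q^{1/2})\bigr) + \mathcal{O}(k^2)$ and then iterate over the $N = t_n/k$ steps, so that the accumulated error is $N\cdot\mathcal{O}(k^2) = \mathcal{O}(k)$. To this end I write the scheme \eqref{schemeV} as $u^n = \Psi_k u^{n-1} + C^{n-1}$, where $\Psi_k v := S(k)(I - \ii kV(x))v$ is the deterministic propagator and $C^{n-1} := -\ii\int_{t_{n-1}}^{t_n} S(k)\,\dd W(r)$ is the noise increment, which has mean zero and is independent of $\mathcal{F}_{t_{n-1}}$. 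Expanding $2H(u^n) = \norm{\nabla u^n}_{L^2}^2 - \langle V(x)u^n, u^n\rangle$ produces three groups of terms: the deterministic energy $H(\Psi_k u^{n-1})$, the terms linear in $C^{n-1}$, and the terms quadratic in $C^{n-1}$.

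The terms linear in $C^{n-1}$ vanish in expectation by independence and the mean-zero property, exactly as in the previous proofs. For the quadratic terms, the gradient contribution $\frac12\E\norm{\nabla C^{n-1}}_{L^2}^2 = \frac{k}{2}\Tr(\nabla Q\nabla)$ is \emph{exact}, because $S(k)$ commutes with $\nabla$ and is an isometry (Lemma~\ref{lemma1}). The potential contribution $-\frac12\E\langle V(x)C^{n-1}, C^{n-1}\rangle$ equals $-\frac{k}{2}\sum_m \int V(x)|S(k)Q^{1/2}e_m|^2\,\dd x$ by the Ito isometry; replacing $S(k)$ by $I$ via $S(k) = I - \ii k\Delta + \mathcal{O}(k^2)$ yields $-\frac{k}{2}\Tr(Q^{1/2}V(x)Q^{1/2})$, with the summed remainder controlled by $k^2\norm{Q^{1/2}}_{\mathcal{L}_2^2}^2$ under the assumption $Q^{1/2}\in\mathcal{L}_2^2$.

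The heart of the argument is to show that the deterministic propagator nearly conserves the energy, i.e. $\E[H(\Psi_k u^{n-1})] = \E[H(u^{n-1})] + \mathcal{O}(k^2)$. Here I would Taylor-expand $S(k) = I - \ii k\Delta + \mathcal{O}(k^2)$ and insert the kick $(I - \ii kV(x))$. The gradient part of the energy is preserved exactly by $S(k)$, but both the potential kick and the fact that $S(k)$ does not preserve $\int V(x)|u|^2\,\dd x$ generate terms of order $k$. The crucial point is that these $\mathcal{O}(k)$ contributions \textbf{cancel}: writing them out, the gradient cross term contributes $-k\,\mathrm{Im}\langle\nabla u^{n-1}, \nabla(V(x)u^{n-1})\rangle$ while the deviation of the potential part contributes $+k\,\mathrm{Im}\langle V(x)u^{n-1}, \Delta u^{n-1}\rangle$; an integration by parts (using periodicity on $\mathbb{T}^d$ and that $V(x)$ is real-valued) shows these two are equal and opposite, so their sum vanishes. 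What survives is therefore of order $k^2$, bounded in expectation by $Ck^2\E[\norm{u^{n-1}}_2^2]$, which is finite by Proposition~\ref{numbound}.

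I expect this cancellation at order $k$ to be the main obstacle: it is the discrete manifestation of the energy conservation of the continuous flow, and it is indispensable, since an uncancelled $\mathcal{O}(k)$ per-step error would accumulate to $\mathcal{O}(1)$ over the $N = t_n/k$ steps and destroy the trace formula. Once this cancellation is in place, collecting the three groups gives the claimed one-step recursion, and a straightforward iteration (as at the end of the proof of Proposition~\ref{prop-traceP}) completes the proof, the regularity hypotheses on $u_0$, $V(x)$, and $Q^{1/2}$ serving precisely to keep all $\mathcal{O}(k^2)$ remainders uniformly bounded in expectation.
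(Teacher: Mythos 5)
Your proposal is mathematically sensible and the order-$k$ cancellation you identify is real (I checked: the $-k\,\mathrm{Im}\langle\nabla u^{n-1},\nabla(Vu^{n-1})\rangle$ term from the gradient energy and the $+k\,\mathrm{Im}\langle Vu^{n-1},\Delta u^{n-1}\rangle$ term from the potential energy do cancel after integration by parts), but it is a genuinely different, and much heavier, route than the one the paper takes. The paper does not attempt a one-step recursion at all. It simply writes $\E[H(u^n)]=\E[H(u(t_n))]+\E[H(u^n)-H(u(t_n))]$, invokes the exact solution's trace formula for the first term, and bounds the second term by $Ck$: after integrating by parts, $|\E[\norm{\nabla u^n}_{L^2}^2-\norm{\nabla u(t_n)}_{L^2}^2]|=|\E[(\Delta(u^n+u(t_n)),u^n-u(t_n))]|$ is controlled by Cauchy--Schwarz together with the $H^2$ moment bounds of Propositions~\ref{potVbdd} and~\ref{numbound} and the strong error estimate $\bigl(\E[\norm{u^n-u(t_n)}_{L^2}^2]\bigr)^{1/2}\leq Ck$ of Theorem~\ref{errorVadd}; the potential term is handled the same way using $\norm{V}_{L^\infty}$. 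This is about ten lines and explains why the hypotheses of the proposition are exactly those of Theorem~\ref{errorVadd} with $\sigma=0$. Your approach, by contrast, re-proves a discrete analogue of energy conservation from scratch; it yields a slightly more structural statement (a per-step recursion) at the cost of considerably more work.

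The one concrete soft spot in your sketch is the claim that after the order-$k$ cancellation the surviving terms are bounded by $Ck^2\,\E[\norm{u^{n-1}}_2^2]$. The second-order Taylor remainder $S(k)-I+\ii k\Delta$ is \emph{not} $\mathcal{O}(k^2)$ on $H^2$; it is bounded by $\frac{k^2}{2}\norm{\Delta^2\,\cdot\,}_{L^2}$, which requires $H^4$ regularity that neither the hypotheses nor Proposition~\ref{numbound} provide. To close the argument with only $H^2$ bounds you must avoid the naive expansion, e.g.\ write $\int V|S(k)\tilde v|^2\,\dd x-\int V|\tilde v|^2\,\dd x=-2\int_0^k\mathrm{Im}\langle VS(r)\tilde v,\Delta S(r)\tilde v\rangle\,\dd r$ and compare the integrand to its value at $r=0$ by moving the unitary group onto the smooth factor ($\langle Vv,(S(r)-I)\Delta\tilde v\rangle=\langle(S(-r)-I)(Vv),\Delta\tilde v\rangle$, using that $H^2$ is an algebra for $d\leq3$ so $\norm{\Delta(Vv)}_{L^2}\leq C\norm{v}_2$). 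This is fixable, but it is precisely the kind of delicacy the paper's comparison-with-the-exact-solution argument sidesteps entirely.
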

\begin{proof}
First we note that with these assumptions we have finite initial energy, since
\begin{align*}
\E[H(u_0)]&=\E[\norm{\nabla u_0}_{L^2}^2]-\E\left[\int_{\mathbb{T}^d}V(x)|u_0|^2\,\dd x\right]\\
&\leq \E[\norm{u_0}_1^2]+\norm{V(x)}_{L^\infty}^2\E[\norm{u_0}_{L^2}^2]\\
&\leq C.
\end{align*}
We add and subtract the energy for the exact solution
\begin{align*}
\E[H(u^n)]&=\E[H(u(t_n))]+\E[H(u^n)-H(u(t_n))]\\
&=\E[H(u_0)]+\frac{t_n}{2}(\Tr(\nabla Q\nabla)-\Tr(Q^{1/2}V(x)Q^{1/2}))+\E[H(u^n)-H(u(t_n))].
\end{align*}
Thus we need to show that $|\E[H(u^n)-H(u(t_n))]|\leq Ck$. We have (omitting the constant $1/2$ for ease of presentation)
\begin{align*}
|\E[H(u^n)-H(u(t_n))]|&\leq |\E[\norm{\nabla u^n}_{L^2}^2-\norm{\nabla u(t_n)}_{L^2}^2]|\\
&\quad+\left|\E\left[\int_{\mathbb{T}^d}V(x)|u(t_n)|^2\,\dd x-\int_{\mathbb{T}^d}V(x)|u^n|^2\,\dd x\right]\right|\\
&=:I^n_1+I^n_2.
\end{align*}
We begin by estimating $I^n_1$. We have, using that the inner product defined by \eqref{innerp} is symmetric, 
integration by parts and the boundary conditions, Cauchy-Schwarz and H\"{o}lder's inequalities,
\begin{align*}
I^n_1 &=|\E[(\nabla(u^n+u(t_n)),\nabla(u^n-u(t_n)))]|\\
&=|\E[(\Delta(u^n+u(t_n)),u^n-u(t_n))]|\\
&\leq \E[\norm{\Delta(u^n+u(t_n))}_{L^2}\norm{u^n-u(t_n)}_{L^2}]\\
&\leq \left(\E[\norm{\Delta(u^n+u(t_n))}_{L^2}^2]\right)^{1/2}\left(\E[\norm{u^n-u(t_n)}_{L^2}^2]\right)^{1/2}\\
&\leq \left(\E[\norm{u^n+u(t_n)}_2^2]\right)^{1/2}\left(\E[\norm{u^n-u(t_n)}_{L^2}^2]\right)^{1/2}.
\end{align*}
Using Propositions~\ref{potVbdd},~\ref{numbound}, and Theorem~\ref{errorVadd} we have
$$I^n_1\leq Ck.$$
Similarly, we have
\begin{align*}
I^n_2&=\left|\E\left[\int_{\mathbb{T}^d} V(x)(|u(t_n)|^2-|u^n|^2)\,\dd x\right]\right|\\
&\leq \norm{V(x)}_{L^\infty}|\E[\norm{u(t_n)}_{L^2}^2-\norm{u^n}_{L^2}^2]|\\
&\leq C\E[\norm{u(t_n)+u^n}_{L^2}\norm{u(t_n)-u^n}_{L^2}]\\
&\leq C\left(\E[\norm{u(t_n)+u^n}_{L^2}^2]\right)^{1/2}\left(\E[\norm{u(t_n)-u^n}_{L^2}^2]\right)^{1/2}\\
&\leq Ck.
\end{align*}
\end{proof}

\subsection{Numerical experiments for the stochastic Schr\"odinger equations with a multiplicative potential}
This subsection illustrates some of the above properties (error estimates and trace formula for the mass) 
of the stochastic exponential integrator \eqref{schemeV} when applied to the linear 
stochastic Schr\"odinger equation with a multiplicative potential on the interval $[0,2\pi]$ 
with periodic boundary conditions \eqref{sseMP}. In these numerical 
experiments, we consider a potential $V(x)=\frac{1}{1+\sin^2(x)}$, \cite{bos06}, and initial values with 
$u_0(x)=\frac{2}{2-\cos(x)}$. We compare the stochastic exponential integrator (SEXP) 
with the Crank-Nicolson scheme (CN) and the semi-implicit Euler-Maruyama scheme, 
explicit in $V(x)u$, (SEM). 

Figure~\ref{fig:msPotenial} illustrates the convergence errors of the above numerical methods 
for a noise with covariance operator having the eigenvalues $\lambda_n=1/(1+n^6)$ for $n\in\Z$. 
The spatial discretisation is done by a pseudospectral method with $M=2^8$ Fourier modes. 
The rates of mean-square convergence (measured in the $L^2$-norm at the end of the interval 
of integration $[0,0.5]$) of these numerical methods are presented in this figure. 
The expected rate of convergence $\bigo{k^2}$ of the stochastic exponential 
integrator, as stated in Theorem~\ref{errorVadd}, can be confirmed. 
Here, the exact solution is approximated by the stochastic exponential method with a very small 
time step $k_{\text{exact}}=2^{-9}$ and $M_{\text s}=750000$ samples 
are used for the approximation of the expected values.

\begin{figure}
\centering
\includegraphics*[height=6.5cm,keepaspectratio]{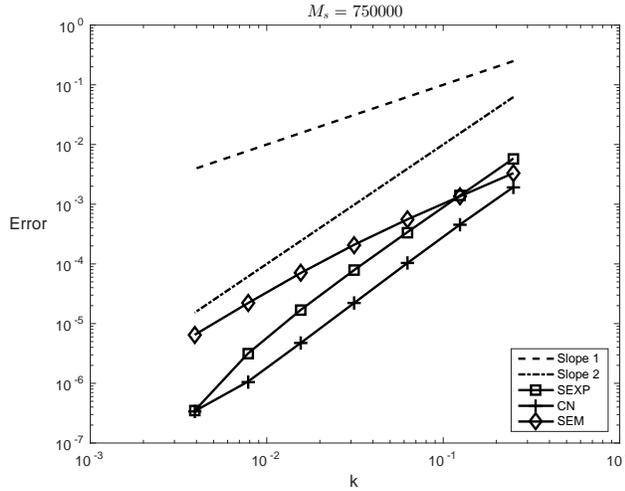}
\caption{Stochastic Schr\"odinger equation with potential: 
Mean-square errors for the stochastic exponential integrator (SEXP), 
the Crank-Nicolson scheme (CN), and the semi-implicit Euler-Maruyama (SEM). 
The dotted, resp. dash-dotted, lines have slopes $1$ and $2$. 
\label{fig:msPotenial}}
\end{figure}

We now consider eigenvalues of $Q$ given by $\lambda_n=1/(1+n^2)$ and examine the 
numerical trace formulas for the mass. Figure~\ref{fig:masstracePotential} is produced using 
$M=2^7$ Fourier modes for the spatial discretisation; $M_{\text s}=10000$ 
samples for the approximatation of the expected values; 
the time interval is $[0,5]$; and a step size $k=0.1$. 
As stated by Proposition~\ref{prop-traceP}, in this figure, one can observe the small error 
in the preservation of the trace formula for the mass 
of the numerical solution given by the exponential integrator.  

\begin{figure}
\centering
\includegraphics*[height=6.5cm,keepaspectratio]{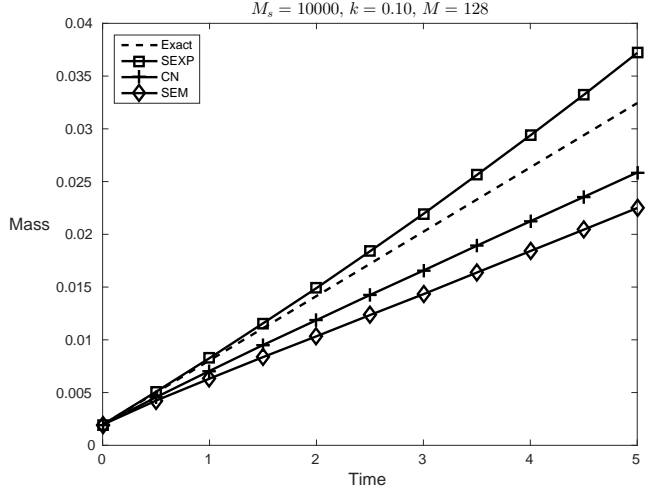}
\caption{Stochastic Schr\"odinger equation with potential: 
Mass trace formulas for the numerical solutions given by 
the stochastic exponential method (SEXP), the Crank-Nicolson scheme (CN) and 
the semi-implicit Euler-Maruyama scheme (SEM). 
Time interval and time step: $[0,5]$, resp. $k=0.1$. 
\label{fig:masstracePotential}}
\end{figure}

%%%%%%%%%%%%%%%%%%%%%%%%%%%%%%%%%%%%%%%%%%%%%%%%%%%%%%%%%%%%%%%%%%%%%%%%%%%%%%%%%%%%%%%%%%%%%%%%%%%%%%%%%%%%%%%%%%%%

\section{Stochastic Schr\"odinger equations driven by multiplicative noise}\label{sec-mult}

We continue our exposition of the exponential integrator applied to the stochastic Schr\"{o}dinger equation 
by considering the equation driven by a multiplicative Ito noise
\begin{equation}
\begin{aligned}
&\ii \mathrm{d}u  = \Delta u\, \mathrm{d}t + V(x)u\,\mathrm{d}t+u\,\mathrm{d}W &&\quad \mathrm{in}\: \ 
\mathbb{R}^d\times(0,\infty), \\
&u(\cdot,0) = u_0 &&\quad \mathrm{in}\: \ \mathbb{R}^d.
\end{aligned}
\label{sse-mult}
\end{equation}
As before, $V(x)$ is a real-valued potential, $u_0$ is an $\mathcal{F}_0$-measurable complex-valued random variable, and the $Q$-Wiener process is complex-valued and square integrable. Further assumptions on $d$, $u_0$, $V(x)$, and $Q$ will be made in the results of this section.
Again, the equation can be written in its mild form
\begin{align*}
u(t)=S(t)u_0-\ii\int_0^t S(t-r)V(x)u(r)\,\dd r-\ii\int_0^t S(t-r)u(r)\,\dd W(r),
\end{align*}
where $S(t)=\e^{-\ii t\Delta}$.

\begin{proposition}\label{multWbdd}
Let $\sigma>\frac{d}{2}$, and assume that $\E[\gnorm{u_0}^{2p}]<\infty$ for some $p\in\mathbb{N}$, $V(x)\in H^\sigma$, and $Q^{1/2}\in\mathcal{L}_2^\sigma$. Then there exists a unique solution $u(t)$ on $[0,T]$, for some $T>0$, to problem \ref{sse-mult} which satisfies
$$\E[\sup_{0\leq t\leq T}\gnorm{u(t)}^{2p}]\leq C.$$
\end{proposition}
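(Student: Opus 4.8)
The plan is to follow the same two-stage strategy used for the additive problem in Proposition~\ref{potVbdd}: first establish existence and uniqueness of the mild solution by a contraction argument, and then derive the moment bound directly from the mild formulation together with Gronwall's lemma. The decisive structural fact, and the reason only the regime $\sigma>\frac{d}{2}$ is treated here, is that $H^\sigma$ is then a Banach algebra. This makes both the potential term $u\mapsto V(x)u$ and the multiplicative noise coefficient (multiplication by $u$, to be composed with $Q^{1/2}$) \emph{globally Lipschitz and linear} in $u$: for the drift one has $\gnorm{V(x)u}\leq C\gnorm{V(x)}\gnorm{u}$, while for the diffusion the algebra property combined with \eqref{ineq1} gives $\norm{(u-v)Q^{1/2}}_{\mathcal{L}_2^\sigma}\leq C\gnorm{u-v}\norm{Q^{1/2}}_{\mathcal{L}_2^\sigma}$. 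With globally Lipschitz coefficients and the isometry property of $S(t)$ from Lemma~\ref{lemma1}, existence and uniqueness then follow from the standard fixed-point theorem \cite[Theorem~$7.4$]{DaPrato}.

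For the bound I would start from the mild equation and split
$$
\E\Bigl[\sup_{0\leq t\leq T}\gnorm{u(t)}^{2p}\Bigr]\leq C\bigl(A_0+A_V+A_W\bigr),
$$
into the contributions of $S(t)u_0$, the deterministic convolution $\int_0^t S(t-r)V(x)u(r)\,\dr$, and the stochastic convolution $\int_0^t S(t-r)u(r)\,\dd W(r)$. The first two terms, $A_0$ and $A_V$, are identical to those estimated in case~(i) of Proposition~\ref{potVbdd}: Lemma~\ref{lemma1} gives $A_0=\E[\gnorm{u_0}^{2p}]\leq C$, while the algebra property together with H\"older's inequality yields $A_V\leq C\int_0^T\E[\sup_{0\leq s\leq r}\gnorm{u(s)}^{2p}]\,\dr$.

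The genuinely new ingredient, and the step I expect to be the main obstacle, is the multiplicative stochastic term $A_W$. Here I would apply Burkholder's inequality \cite[Lemma~7.2]{DaPrato} to bound $A_W$ by $\E\bigl[(\int_0^T\norm{S(t-r)u(r)Q^{1/2}}_{\mathcal{L}_2^\sigma}^2\,\dr)^p\bigr]$, then remove the semigroup using the isometry of Lemma~\ref{lemma1} and \eqref{ineq1}, and finally control the Hilbert--Schmidt norm of the multiplication operator by the algebra estimate $\norm{u(r)Q^{1/2}}_{\mathcal{L}_2^\sigma}\leq C\gnorm{u(r)}\norm{Q^{1/2}}_{\mathcal{L}_2^\sigma}$. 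A Jensen/H\"older step in the time integral then produces $A_W\leq C\norm{Q^{1/2}}_{\mathcal{L}_2^\sigma}^{2p}\int_0^T\E[\sup_{0\leq s\leq r}\gnorm{u(s)}^{2p}]\,\dr$, matching the form of $A_V$. The care needed lies precisely in turning square-integrability of the operator norm into an expression controlled by $\sup_{0\leq s\leq r}\gnorm{u(s)}^{2p}$ under the time integral, after which the three contributions combine into
$$
\E\Bigl[\sup_{0\leq t\leq T}\gnorm{u(t)}^{2p}\Bigr]\leq C_1+C_2\int_0^T\E\Bigl[\sup_{0\leq s\leq t}\gnorm{u(s)}^{2p}\Bigr]\,\dt,
$$
and Gronwall's lemma closes the argument. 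I expect nothing harder than bookkeeping once the Hilbert--Schmidt bound on $\norm{u(r)Q^{1/2}}_{\mathcal{L}_2^\sigma}$ is established; the whole proof hinges on that single estimate, which is exactly why the algebra threshold $\sigma>\frac{d}{2}$ is imposed.
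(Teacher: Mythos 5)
Your proposal is correct and follows essentially the same route as the paper's proof: existence and uniqueness via the standard fixed-point argument of \cite[Theorem~$7.4$]{DaPrato}, the same three-way splitting of the mild solution with the first two terms handled exactly as in Proposition~\ref{potVbdd}, and the stochastic convolution controlled by Burkholder's inequality together with the algebra bound $\norm{u(r)Q^{1/2}}_{\mathcal{L}_2^\sigma}\leq C\gnorm{u(r)}\norm{Q^{1/2}}_{\mathcal{L}_2^\sigma}$ before closing with Gronwall's lemma. No gaps.
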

\begin{remark}\label{remark1}
The result of Proposition~\ref{multWbdd} holds with $\sigma=0$ if 
we assume $d=1,2,3,$ $\E[\norm{u_0}_0^{2p}]<\infty$ for some $p\in\mathbb{N}$, 
$V(x)\in H^2$, and $Q^{1/2}\in\mathcal{L}_2^2$. 
\end{remark}

\begin{proof}
The existence and uniqueness of a solution again follows by a standard fixed point argument as used in \cite[Theorem $7.4$]{DaPrato}.
From the mild equation and the proof of Proposition \ref{potVbdd}, we have
\begin{align*}
\E\left[\sup_{0\leq t\leq T}\gnorm{u(t)}^{2p}\right]&\leq C\left(\E[\sup_{0\leq t\leq T}\gnorm{S(t)u_0}^{2p}]\right.\\
&\quad+ \E\left[\sup_{0\leq t\leq T}\gnorm{\int_0^t S(t-r)V(x)u(r)\,\dd r}^{2p}\right]\\
&\quad+\left.\E\left[\sup_{0\leq t\leq T}\gnorm{\int_0^t S(t-r)u(r)\,\dd W(r)}^{2p}\right]\right)\\
&\leq C_1+C_2\int_0^T\E[\sup_{0\leq s\leq t}\gnorm{u(s)}^{2p}]\,\dd t\\
&\quad+C_3\E\left[\sup_{0\leq t\leq T}\gnorm{\int_0^t S(t-r)u(r)\,\dd W(r)}^{2p}\right].
\end{align*}
We need to bound the stochastic integral. Using the Burkholder inequality, that $H^\sigma$ forms an algebra, and H\"{o}lder's inequality, we have
\begin{align*}
\E\left[\sup_{0\leq t\leq T}\gnorm{\int_0^t S(t-r)u(r)\,\dd W(r)}^{2p}\right]&\leq C\E\left[\left(\int_0^T\norm{u(t)Q^{1/2}}_{\mathcal{L}_2^\sigma}^2\,\dd t\right)^p\right]\\
&\leq C\E\left[\left(\int_0^T\gnorm{u(t)}^2\norm{Q^{1/2}}_{\mathcal{L}_2^\sigma}^2\,\dd t\right)^p\right]\\
&\leq C\E\left[\int_0^T\gnorm{u(t)}^{2p}\,\dd t\right]\\
&\leq C\int_0^T\E[\sup_{0\leq s\leq t}\gnorm{u(s)}^{2p}]\,\dd t.
\end{align*}
The result follows from Gronwall's lemma.
\end{proof}

We will need the following lemma regarding regularity of the exact solution.
\begin{lemma}\label{reg}
Let $\sigma>\frac{d}{2}$ and assume $\E[\norm{u_0}_{\sigma+1}^{2p}]<\infty$ for some $p\in\mathbb{N}$, $V(x)\in H^{\sigma+1}$, 
and $Q^{1/2}\in\mathcal{L}_2^{\sigma+1}$. Then, for $0\leq s\leq t<\infty,$ the exact solution of \eqref{sse-mult} satisfies the following regularity estimate
$$\E[\gnorm{u(t)-u(s)}^{2p}]\leq C|t-s|^p.$$
\end{lemma}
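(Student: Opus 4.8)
The plan is to exploit the semigroup (flow) property of the mild solution, starting the mild formula at time $s$ rather than at $0$. For $0\le s\le t$ this gives
\begin{align*}
u(t)-u(s)&=(S(t-s)-I)u(s)-\ii\int_s^t S(t-r)V(x)u(r)\,\dr\\
&\quad-\ii\int_s^t S(t-r)u(r)\,\dd W(r)=:A+B+C.
\end{align*}
Taking the $H^\sigma$-norm, raising to the power $2p$, and using $(a+b+c)^{2p}\le C(a^{2p}+b^{2p}+c^{2p})$, it then suffices to bound each of $\E[\gnorm{A}^{2p}]$, $\E[\gnorm{B}^{2p}]$, $\E[\gnorm{C}^{2p}]$ by $C|t-s|^p$. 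Throughout I would use that $\E[\sup_{0\le r\le T}\norm{u(r)}_{\sigma+1}^{2p}]\le C$, which follows from Proposition~\ref{multWbdd} applied with regularity index $\sigma+1$ (note $\sigma+1>d/2$, and the hypotheses $u_0\in H^{\sigma+1}$, $V(x)\in H^{\sigma+1}$, $Q^{1/2}\in\mathcal{L}_2^{\sigma+1}$ are precisely those required).

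The drift term $B$ and the stochastic term $C$ are routine. For $B$, since $S(t-r)$ is an isometry on $H^\sigma$ (Lemma~\ref{lemma1}) and $H^\sigma$ is an algebra ($\sigma>d/2$), one has $\gnorm{B}\le\int_s^t\gnorm{V(x)u(r)}\,\dr\le C\gnorm{V(x)}\,|t-s|\sup_{0\le r\le T}\gnorm{u(r)}$, which yields $\E[\gnorm{B}^{2p}]\le C|t-s|^{2p}\le C|t-s|^p$ on $[0,T]$. For $C$, I would invoke Burkholder's inequality, the isometry property of $S(t-r)$ in the Hilbert–Schmidt norm, the bound $\norm{u(r)Q^{1/2}}_{\mathcal{L}_2^\sigma}\le C\gnorm{u(r)}\norm{Q^{1/2}}_{\mathcal{L}_2^\sigma}$ (multiplication by $u(r)$ is bounded on $H^\sigma$ by the algebra property, combined with \eqref{ineq1}), and Hölder's inequality, to obtain
\begin{align*}
\E[\gnorm{C}^{2p}]&\le C\E\left[\left(\int_s^t\norm{u(r)Q^{1/2}}_{\mathcal{L}_2^\sigma}^2\,\dr\right)^p\right]\\
&\le C\norm{Q^{1/2}}_{\mathcal{L}_2^\sigma}^{2p}|t-s|^p\E[\sup_{0\le r\le T}\gnorm{u(r)}^{2p}]\le C|t-s|^p.
\end{align*}

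The genuinely delicate term is $A=(S(t-s)-I)u(s)$, and this is where I expect the main obstacle to lie. The naive estimate from Lemma~\ref{lemma1}, namely $\gnorm{(S(t-s)-I)w}\le|t-s|\gnorm{\Delta w}$, is unavailable here, since it would require $u(s)\in H^{\sigma+2}$, whereas the hypotheses only furnish $H^{\sigma+1}$-regularity of the solution. The resolution is to trade one order of spatial regularity for half an order in time via the smoothing estimate of Lemma~\ref{lemma2}: $\norm{S(t-s)-I}_{\mathcal{L}(H^{\sigma+1},H^\sigma)}\le C|t-s|^{1/2}$. This gives $\gnorm{A}\le C|t-s|^{1/2}\norm{u(s)}_{\sigma+1}$, hence $\E[\gnorm{A}^{2p}]\le C|t-s|^p\E[\sup_{0\le r\le T}\norm{u(r)}_{\sigma+1}^{2p}]\le C|t-s|^p$. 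Combining the three bounds yields $\E[\gnorm{u(t)-u(s)}^{2p}]\le C|t-s|^p$, as claimed. The one-power loss of regularity (using the $H^{\sigma+1}$-bound to control a quantity scaling like $|t-s|^{1/2}$) is exactly what produces the temporal Hölder exponent $1/2$ reflected in the final factor $|t-s|^p$.
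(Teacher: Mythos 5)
Your proposal is correct and follows essentially the same route as the paper's proof: the same decomposition of $u(t)-u(s)$ via the mild formula started at $s$, the same use of Lemma~\ref{lemma2} to handle $(S(t-s)-I)u(s)$ with the $H^{\sigma+1}$-bound from Proposition~\ref{multWbdd}, and the same Burkholder/algebra-property estimates for the drift and stochastic terms.
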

\begin{remark}\label{remark2}
The result of Lemma~\ref{reg} holds with $\sigma=0$ if we assume 
$d=1,2,3,$ $\E[\norm{u_0}_2^{2p}]<\infty$ for some $p\in\mathbb{N}$, $V(x)\in H^2,$ 
and $Q^{1/2}\in\mathcal{L}_2^2$.
\end{remark}

\begin{proof}
Using the mild equation, we have
\begin{align*}
u(t)-u(s)&=(S(t-s)-I)u(s)-\ii\int_{s}^{t}S(t-r)V(x)u(r)\,\dd r\\
&\quad-\ii\int_s^t S(t-r)u(r)\,\dd W(r).
\end{align*}
We proceed to estimate the three terms on the right hand side. The first one, due to Lemma \ref{lemma2}, reads
\begin{align*}
\gnorm{(S(t-s)-I)u(s)}&\leq \norm{S(t-s)-I}_{\mathcal{L}(H^{\sigma+1},H^\sigma)}\norm{u(s)}_{\sigma+1}\\
&\leq C|t-s|^{1/2}\norm{u(s)}_{\sigma+1},
\end{align*}
so that, by Proposition \ref{multWbdd},
$$\E[\gnorm{(S(t-s)-I)u(s)}^{2p}]\leq C|t-s|^p.$$
To estimate the second term, we use that $H^\sigma$ forms an algebra and H\"{o}lder's inequality
\begin{align*}
\gnorm{\int_s^t S(t-r)V(x)u(r)\,\dd r}&\leq \int_s^t\gnorm{V(x)u(r)}\,\dd r\\
&\leq C\int_{s}^t\gnorm{V(x)}\gnorm{u(r)}\,\dd r\\
&\leq C|t-s|\left(\sup_{s\leq r\leq t}\gnorm{u(r)}^{2p}\right)^\frac{1}{2p}.
\end{align*}
Taking expectation of the $2p$-th power of the above expression and using Proposition~\ref{multWbdd}, we get
$$\E[\gnorm{\int_s^t S(t-r)V(x)u(r)\,\dd r}^{2p}]\leq C|t-s|^{2p}.$$
Finally, we have
\begin{align*}
\E\left[\gnorm{\int_s^t S(t-r)u(r)\,\dd W(r)}^{2p}\right]&\leq C\E\left[\left(\int_{s}^{t}\norm{u(r)Q^{1/2}}_{\mathcal{L}_2^\sigma}^2\,\dd r\right)^p\right] \\
&\leq C|t-s|^p\E[\sup_{s\leq r\leq t}\gnorm{u(r)}^{2p}]\\
&\leq C|t-s|^p.
\end{align*}
\end{proof}

\subsection{Error analysis for multiplicative noise}
As before, we let $N>0$ be an integer and divide the interval $[0,T]$ 
into subintervals $0=t_0<t_1<\ldots<t_{N-1}<t_N=T$ of equal length $k$ so that $t_n=nk$. 
We discretise the mild equation and our exponential integrator now reads
\begin{align}\label{schMult}
u^n &= S(k)u^{n-1}-\ii kS(k)V(x)u^{n-1}-\ii S(k)u^{n-1}\Delta W^{n-1}\nonumber\\
&=S(t_n)u_0-\ii\sum_{j=0}^{n-1}\int_{t_j}^{t_{j+1}}S(t_n-t_j)V(x)u^j\,\dd r-\ii\sum_{j=0}^{n-1}\int_{t_j}^{t_{j+1}}S(t_n-t_j)u^j\,\dd W(r),
\end{align}
where $\Delta W^{n-1}=W(t_n)-W(t_{n-1})$.

As seen in the theorem below, because of the multiplicative noise, the order of convergence is reduced to one half.
\begin{theorem}\label{LastTh}
Consider the time integration of the stochastic Schr\"{o}dinger equation \eqref{sse-mult} given by the exponential integrator \eqref{schMult}.
Let $\sigma>\frac{d}{2}$ and assume that $\E[\norm{u_0}_{\sigma+1}^{2p}]<\infty$ for some $p\in\mathbb{N}$. 
Assume also that $V(x)\in H^{\sigma+1}$ and $Q^{1/2}\in\mathcal{L}_2^{\sigma+1}$. Then there exists a constant $C$ such that
$$
\E[\max_{n=1,\ldots N}\gnorm{u^n-u(t_n)}^{2p}]\leq Ck^p.
$$
\end{theorem}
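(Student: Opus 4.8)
The plan is to mirror the structure of the proof of Theorem~\ref{errorVadd}, splitting the global error into a drift contribution and a diffusion contribution and closing with a discrete Gronwall argument. The key simplification compared with the additive case is that the target rate is now only $k^p$ (strong order $1/2$), so I can afford the coarser regularity that is actually available here: one spatial derivative less than before and merely Hölder-$1/2$ continuity in time from Lemma~\ref{reg}. Concretely, from \eqref{schMult} and the mild formula I would write $u^n-u(t_n)=\mathrm{Err}^n_V+\mathrm{Err}^n_W$, where $\mathrm{Err}^n_V=\ii\sum_{j}\int_{t_j}^{t_{j+1}}(S(t_n-r)V(x)u(r)-S(t_n-t_j)V(x)u^j)\,\dr$ and $\mathrm{Err}^n_W=\ii\sum_{j}\int_{t_j}^{t_{j+1}}(S(t_n-r)u(r)-S(t_n-t_j)u^j)\,\dd W(r)$, and treat the two separately.

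For the drift part I split the integrand as $S(t_n-r)V(x)(u(r)-u(t_j))+(S(t_n-r)-S(t_n-t_j))V(x)u(t_j)+S(t_n-t_j)V(x)(u(t_j)-u^j)=:I^n_1+I^n_2+I^n_3$. Unlike Theorem~\ref{errorVadd}, I do not need the further mild expansion of $I^n_1$: using the isometry (Lemma~\ref{lemma1}), the algebra property of $H^\sigma$, Jensen's inequality in $r$, and Lemma~\ref{reg} directly gives $\E[\max_n\gnorm{I^n_1}^{2p}]\le C\int_0^T\E[\gnorm{u(r)-u([r/k]k)}^{2p}]\,\dr\le Ck^p$. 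For $I^n_2$, since $V(x)u(t_j)$ lies only in $H^{\sigma+1}$ here, I replace the first-order estimate used before by Lemma~\ref{lemma2}, so that $\gnorm{(S(t_j-r)-I)V(x)u(t_j)}\le C|r-t_j|^{1/2}\norm{u(t_j)}_{\sigma+1}$; this yields $\gnorm{I^n_2}\le Ck^{1/2}\sup_{t}\norm{u(t)}_{\sigma+1}$ and hence $\bigo{k^p}$ by Proposition~\ref{multWbdd}. The term $I^n_3$ is the stability term, bounded by $Ck\sum_{j}\gnorm{u(t_j)-u^j}$, producing a Gronwall sum.

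The diffusion part is the main point. I split its integrand into the consistency piece $S(t_n-r)u(r)-S(t_n-t_j)u(t_j)$ and the stability piece $S(t_n-t_j)(u(t_j)-u^j)$. The essential device, inherited from Theorem~\ref{maxlinear}, is to factor out the outer isometry: because $S$ is a group of isometries on $H^\sigma$, each stochastic integral equals $S(t_n)$ applied to a martingale whose integrand is independent of $n$, so that taking $\gnorm{\cdot}$ removes $S(t_n)$ and turns $\max_n$ into $\sup_{t\in[0,T]}$ of a single martingale, to which Burkholder's inequality applies. For the consistency piece, Burkholder reduces the bound to $\E[(\int_0^T\norm{\Theta(r)Q^{1/2}}_{\mathcal{L}_2^\sigma}^2\,\dr)^p]$, with factored integrand $\Theta(r)=S(-r)u(r)-S(-[r/k]k)u([r/k]k)=S(-r)(u(r)-u(t_j))+(S(-r)-S(-t_j))u(t_j)$; using $(a+b)^2\le 2a^2+2b^2$, the isometry, \eqref{ineq1}, and the algebra property, the first summand is controlled by $\gnorm{u(r)-u(t_j)}$ (hence $\bigo{k^p}$ by Lemma~\ref{reg}) and the second by $C|r-t_j|^{1/2}\norm{u(t_j)}_{\sigma+1}$ via Lemma~\ref{lemma2} (hence $\bigo{k^p}$ by Proposition~\ref{multWbdd}). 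The stability piece, whose integrand $S(-t_j)(u(t_j)-u^j)$ is $\mathcal{F}_{t_j}$-adapted, contributes $Ck\sum_j\gnorm{u(t_j)-u^j}^2$ inside the $p$-th power, which after Hölder's inequality becomes $Ck\sum_j\E[\max_{l\le j}\gnorm{u^l-u(t_l)}^{2p}]$.

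Collecting everything yields $\E[\max_{n\le N}\gnorm{u^n-u(t_n)}^{2p}]\le C_1k^p+C_2k\sum_{j=0}^{N-1}\E[\max_{l\le j}\gnorm{u^l-u(t_l)}^{2p}]$, and discrete Gronwall gives the claimed $Ck^p$. The hard part is precisely the diffusion error: the loss of half an order relative to the additive case is traced to the temporal Hölder-$1/2$ regularity of the exact solution (Lemma~\ref{reg}) and to the $t^{1/2}$ smoothing of $S(t)-I$ measured in the weaker norm (Lemma~\ref{lemma2}), while the one technical subtlety is the isometric factorisation that must be performed \emph{before} invoking Burkholder in order to legitimately exchange $\max_n$ for the supremum of a single martingale.
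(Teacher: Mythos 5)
Your proposal is correct and follows essentially the same route as the paper's proof: the identical decomposition into $\text{Err}^n_V=I^n_1+I^n_2+I^n_3$ and $\text{Err}^n_W$ (your ``consistency'' and ``stability'' pieces are exactly the paper's $J^n_1+J^n_2$ and $J^n_3$), with $I^n_1$ and $J^n_1$ handled directly by Lemma~\ref{reg}, $I^n_2$ and $J^n_2$ by the $t^{1/2}$-smoothing of Lemma~\ref{lemma2} together with the algebra property, Burkholder for the stochastic terms, and a discrete Gronwall argument to close. Your explicit remark that the isometric factorisation $S(t_n-r)=S(t_n)S(-r)$ must precede Burkholder in order to pass from $\max_n$ to the supremum of a genuine martingale is a point the paper leaves implicit, but it is the same mechanism.
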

\begin{remark}
Following remarks~\ref{remark1} and~\ref{remark2}, the result 
of Theorem~\ref{LastTh} holds with $\sigma=0$ if $d=1,2,3,$ 
$\E[\norm{u_0}_2^{2p}]<\infty$ for some $p\in\mathbb{N}$, $V(x)\in H^2,$ and $Q^{1/2}\in\mathcal{L}_2^2$. 
\end{remark}
\begin{remark}
We could replace the nonlinear term and the $u$ in front of the noise in equation \eqref{sse-mult} with 
more general bounded Lipschitz functions $F\colon H^\sigma\to H^\sigma$ and $\Phi\colon H^\sigma\to \mathcal{L}_2^\sigma$. 
The above result holds with similar assumptions as in Hypothesis~$5.1$ and~$5.2$ in \cite{Weakandstrongorderofconv}, 
that is if we assume that the solution $(u(t))_{t\in[0,T]}$ is in $L^{2p}(\Omega; L^\infty(0,T;H^{\sigma+1})$, $(F(u(t)))_{t\in[0,T]}$ 
is in $L^{2p}(\Omega; L^\infty(0,T;H^{\sigma+1}))$, $(\Phi(u(t)))_{t\in[0,T]}$ is in $L^{2p}(\Omega; L^\infty(0,T;\mathcal{L}_2^{\sigma+1}))$, and $u_0\in L^{2p}(\Omega; H^{\sigma+1})$. 
\end{remark}

\begin{proof}
Using the mild formulation, we have
\begin{align*}
u^n-u(t_n)&=\ii\sum_{j=0}^{n-1}\int_{t_j}^{t_{j+1}}(S(t_n-r)V(x)u(r)-S(t_n-t_j)V(x)u^j)\,\dd r\\
&\quad+\ii\sum_{j=0}^{n-1}\int_{t_j}^{t_{j+1}}(S(t_n-r)u(r)-S(t_n-t_j)u^j)\,\dd W(r)\\
&=: \text{Err}^n_V+\text{Err}^n_W.
\end{align*}
We add and subtract suitable terms to get
\begin{align*}
\text{Err}^n_V&=\ii\sum_{j=0}^{n-1}\int_{t_j}^{t_{j+1}}S(t_n-r)V(x)(u(r)-u(t_j))\,\dd r\\
&\quad+\ii\sum_{j=0}^{n-1}\int_{t_j}^{t_{j+1}}(S(t_n-r)-S(t_n-t_j))V(x)u(t_j)\,\dd r\\
&\quad+\ii\sum_{j=0}^{n-1}\int_{t_j}^{t_{j+1}}S(t_n-t_j)V(x)(u(t_j)-u^j)\,\dd r\\
&=:I^n_1+I^n_2+I^n_3,
\end{align*}
and
\begin{align*}
\text{Err}^n_W&=\ii\sum_{j=0}^{n-1}\int_{t_j}^{t_{j+1}}S(t_n-r)(u(r)-u(t_j))\,\dd W(r)\\
&\quad+\ii\sum_{j=0}^{n-1}\int_{t_j}^{t_{j+1}}(S(t_n-r)-S(t_n-t_j))u(t_j)\,\dd W(r)\\
&\quad+\ii\sum_{j=0}^{n-1}\int_{t_j}^{t_{j+1}}S(t_n-t_j)(u(t_j)-u^j)\,\dd W(r)\\
&=:J^n_1+J^n_2+J^n_3.
\end{align*}

We now proceed to estimate each of these terms. For $I^n_1$, using again that $[r/k]$ denotes 
the integer part of $r/k$ and then H\"{o}lder's inequality, we have
\begin{align*}
\gnorm{I^n_1}&\leq \sum_{j=0}^{n-1}\int_{t_j}^{t_{j+1}}\gnorm{V(x)(u(r)-u(t_j))}\,\dd r\\
&\leq C\gnorm{V(x)}\sum_{j=0}^{n-1}\int_{t_j}^{t_{j+1}}\gnorm{u(r)-u(t_j)}\,\dd r\\
&\leq C\int_0^{t_n}\gnorm{u(r)-u([r/k]k)}\,\dd r\\
&\leq CT^\frac{2p-1}{2p}\left(\int_0^{t_n}\gnorm{u(r)-u([r/k]k)}^{2p}\,\dd r\right)^\frac{1}{2p},
\end{align*}
so that, using Lemma~\ref{reg},
\begin{align*}
\E[\max_{n=1,\ldots,N}\gnorm{I^n_1}^{2p}]&\leq C\E\left[\int_0^T\gnorm{u(r)-u([r/k]k)}^{2p}\,\dd r\right]\\
&\leq C\sum_{j=0}^{N-1}\int_{t_j}^{t_{j+1}}\E[\gnorm{u(r)-u(t_j)}^{2p}]\,\dd r\\
&\leq C\sum_{j=0}^{N-1}\int_{t_j}^{t_{j+1}}|r-t_j|^p\,\dd r\\
&\leq Ck^p.
\end{align*}
By Lemma~\ref{lemma2} and H\"{o}lder's inequality, we have
\begin{align*}
\gnorm{I^n_2}&\leq\sum_{j=0}^{n-1}\int_{t_j}^{t_{j+1}}\gnorm{S(t_n-t_j)(S(t_j-r)-I)V(x)u(t_j)}\,\dd r\\
&\leq C\sum_{j=0}^{n-1}\int_{t_j}^{t_{j+1}}|t_j-r|^{1/2}\norm{V(x)}_{\sigma+1}\norm{u(t_j)}_{\sigma+1}\,\dd r\\
&\leq Ck^{3/2}\left(\sum_{j=0}^{n-1}\norm{u(t_j)}_{\sigma+1}^{2p}\right)^{\frac{1}{2p}}\left(\sum_{j=0}^{n-1} 1^\frac{2p}{2p-1}\right)^{\frac{2p-1}{2p}}\\
&\leq Ck^{\frac{p+1}{2p}}\left(\sum_{j=0}^{n-1}\norm{u(t_j)}_{\sigma+1}^{2p}\right)^{\frac{1}{2p}},
\end{align*}
so that, by Proposition~\ref{multWbdd},
\begin{align*}
\E[\max_{n=1,\ldots,N}\gnorm{I^n_2}^{2p}]&\leq Ck^{p+1}\sum_{j=0}^{N-1}\E[\norm{u(t_j)}_{\sigma+1}^{2p}]\\
&\leq Ck^p.
\end{align*}
Again using H\"{o}lder's inequality, we have for $I^n_3$
\begin{align*}
\gnorm{I^n_3}&\leq\sum_{j=0}^{n-1}\int_{t_j}^{t_{j+1}}\gnorm{V(x)(u(t_j)-u^j)}\,\dd r\\
&\leq Ck\sum_{j=0}^{n-1}\gnorm{V(x)}\gnorm{u(t_j)-u^j}\\
&\leq Ck^\frac{1}{2p}\left(\sum_{j=0}^{n-1}\gnorm{u(t_j)-u^j}^{2p}\right)^\frac{1}{2p},
\end{align*}
and thus
$$\E[\max_{n=1,\ldots,N}\gnorm{I^n_3}^{2p}]\leq Ck\sum_{j=0}^{N-1}\E[\max_{l=0,\ldots,j}\gnorm{u(t_l)-u^l}^{2p}].$$
When estimating the stochastic integrals we will use the Burkholder inequality and that $[r/k]$ is the integer part of $r/k$. Using also H\"{o}lder's inequality, and Lemma \ref{reg}, we have for $J^n_1$
\begin{align*}
\E[\max_{n=1,\ldots,N}\gnorm{J^n_1}^{2p}]&\leq \E\left[\sup_{0\leq t\leq T}\gnorm{\int_0^t S(t-[r/k]k)(u(r)-u([r/k]k))\,\dd W(r)}^{2p}\right]\\
&\leq C\E\left[\left(\int_0^T\norm{(u(t)-u([t/k]k))Q^{1/2}}_{\mathcal{L}_2^\sigma}^2\,\dd t\right)^p\right]\\
&\leq C\norm{Q^{1/2}}_{\mathcal{L}_2^\sigma}^{2p}\E\left[\left(\left(\int_0^T\gnorm{u(t)-u([t/k]k)}^{2p}\,\dd t\right)^\frac{1}{p} T^\frac{p-1}{p}\right)^p\right]\\
&\leq C\sum_{j=0}^{N-1}\int_{t_j}^{t_{j+1}}\E[\gnorm{u(t)-u(t_j)}^{2p}]\,\dd t\\
&\leq C\sum_{j=0}^{N-1}\int_{t_j}^{t_{j+1}}|t-t_j|^p\,\dd t\\
&\leq Ck^p.
\end{align*}
Similarly for $J^n_2$, we have
\begin{align*}
\E[\max_{n=1,\ldots,N}\gnorm{J^n_2}^{2p}]&\leq\E\left[\sup_{0\leq t\leq T}\gnorm{\int_0^t S(t-[r/k]k)(S([r/k]k-r)-I)u([r/k]k)\,\dd W(r)}^{2p}\right]\\
&\leq C\E\left[\left(\int_0^T\norm{(S([t/k]k-t)-I)u([t/k]k)Q^{1/2}}_{\mathcal{L}_2^\sigma}^2\,\dd t\right)^p\right]\\
&\leq CT^{p-1}\int_0^T\E\left[\norm{(S([t/k]k-t)-I)u([t/k]k)Q^{1/2}}_{\mathcal{L}_2^\sigma}^{2p}\right]\,\dd t\\
&\leq C\sum_{j=0}^{N-1}\int_{t_j}^{t_{j+1}}\E[\norm{S(t_j-t)-I)u(t_j)Q^{1/2}}_{\mathcal{L}_2^\sigma}^{2p}\,\dd t\\
&\leq C\norm{Q^{1/2}}_{\mathcal{L}_2^{\sigma+1}}^{2p}\sum_{j=0}^{N-1}\int_{t_j}^{t_{j+1}} |t_j-t|^p\E[\norm{u(t_j)}_{\sigma+1}^{2p}]\,\dd t\\
&\leq Ck^p.
\end{align*}
For $J^n_3$, we have
\begin{align*}
\E[\max_{n=1,\ldots,N}\gnorm{J^n_3}^{2p}]&\leq C\E\left[\sup_{0\leq t\leq T}\gnorm{\int_0^t S(t-[r/k]k)(u([r/k]k)-u^{[r/k]k})\,\dd W(r)}^{2p}\right]\\
&\leq C\E\left[\left(\int_0^T\norm{(u([t/k]k)-u^{[t/k]k})Q^{1/2}}_{\mathcal{L}_2^\sigma}^2\,\dd t\right)^p\right]\\
&\leq C T^{p-1}\norm{Q^{1/2}}_{\mathcal{L}_2^\sigma}^{2p}\sum_{j=0}^{N-1}\int_{t_j}^{t_{j+1}}\E[\gnorm{u(t_j)-u^j}^{2p}]\,\dd t\\
&\leq Ck\sum_{j=0}^{N-1}\E[\max_{l=0,\ldots,j}\gnorm{u(t_l)-u^l}^{2p}].
\end{align*}
Putting all these estimates together yields
$$\E[\max_{n=1,\ldots,N}\gnorm{u^n-u(t_n)}^{2p}]\leq C_1 k^p+C_2k\sum_{j=0}^{N-1}\E[\max_{l=0,\ldots,j}\gnorm{u^l-u(t_l)}^{2p}],$$
and Gronwall's lemma completes the proof.
\end{proof}

\subsection{Numerical experiments for Schr\"odinger equations with a multiplicative noise}
This subsection illustrates the convergence properties of the stochastic 
exponential integrator \eqref{schMult} when applied to the stochastic partial differential equation 
\eqref{sse-mult} on the interval $[0,2\pi]$ with periodic boundary conditions. 
In these numerical experiments, we set $V(x)=0$ and 
$u_0(x)=\e^{-5(x-\pi)^2}$. We compare the stochastic exponential integrator (SEXP) 
with the Crank-Nicolson scheme (CN) and the semi-implicit Euler-Maruyama scheme (SEM). 

Figure~\ref{fig:msMulti} illustrates the convergence errors of the above numerical methods 
for a noise with covariance operator having the eigenvalues $\lambda_n=1/(1+|n|^{5.1})$ for $n\in\Z$. 
The spatial discretisation is done by a pseudospectral method with $M=2^8$ Fourier modes. 
The rates of mean-square convergence (measured in the $L^2$-norm at the end of the interval 
of integration $[0,0.5]$) of these numerical methods are presented in this figure. 
The expected rate of convergence $\bigo{k^1}$ of the stochastic exponential 
integrator, as stated in Theorem~\ref{LastTh}, can be confirmed. 
Here, the exact solution is approximated by the stochastic exponential method with a very small 
time step $k_{\text{exact}}=2^{-10}$ and $M_{\text s}=750000$ samples 
are used for the approximation of the expected values.

\begin{figure}
\centering
\includegraphics*[height=6.5cm,keepaspectratio]{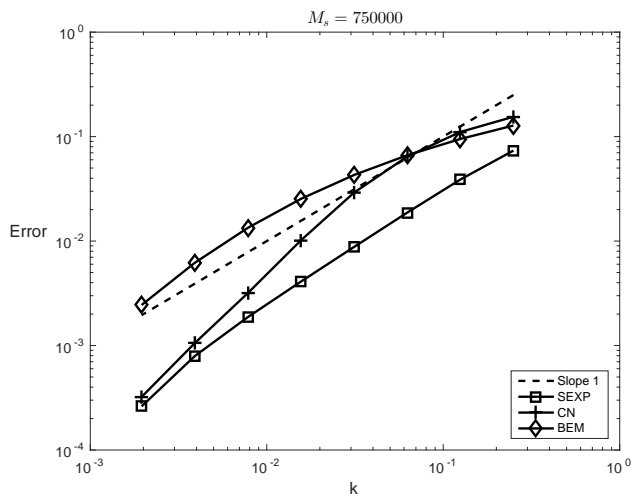}
\caption{Stochastic Schr\"odinger equation with multiplicative noise: 
Mean-square errors for the stochastic exponential integrator (SEXP), 
the Crank-Nicolson scheme (CN), and the semi-implicit Euler-Maruyama (SEM). 
The dotted line has slope $1$. 
\label{fig:msMulti}}
\end{figure}

\bibliographystyle{siam}
\bibliography{biblio}

\end{document}